\numberwithin{equation}{section}
\newcommand{\R}{{{\mathbb R}}}
\newcommand{\s}{{{\mathcal S}}}
\newcommand{\Ftil}{{{\mathcal{F}(\tilde{\Gamma})}}}
\newcommand{\F}{{{\mathcal{F}(\Gamma)}}}
\newcommand{\N}{{{\mathbb N}}}
\providecommand{\dr}[1]{\partial_{r}^{#1}}
\providecommand{\dx}[1]{\partial_{\xi}#1}
\providecommand{\da}[1]{\partial_{\eta}^{#1}}
\providecommand{\dxx}[1]{\partial_{\xi}^{2}#1}
\providecommand{\abs}[1]{\lvert#1\rvert}
\providecommand{\e}[1]{^{#1}}
\providecommand{\norm}[1]{\lVert#1\rVert}
\providecommand{\est}[1]{C\exp(\norm{\Ftil}^{2}_{L^{\infty}(S\e{1})}+\norm{\tilde{\Gamma}}^{2}_{H^{#1}(S\e{1})})}
\theoremstyle{plain}
\providecommand{\om}[1]{\partial\Omega_2\e{#1}}
\newtheorem{lem}{Lemma}[section]
\newtheorem{prop}{Proposition}[section]
\newtheorem{prop*}{Proposition}
\newtheorem{thm}{Theorem}[section]
\newtheorem{nota}{Remark}[section]
\theoremstyle{definition}
\providecommand{\customgenericname}{}
\newcommand{\newcustomtheorem}[2]{%
  \newenvironment{#1}[1]
  {%
   \renewcommand\customgenericname{#2}%
   \renewcommand\theinnercustomgeneric{##1}%
   \innercustomgeneric
  }
  {\endinnercustomgeneric}
}
\title[On the dynamics of thin layers of viscous flows inside another viscous fluid]{On the dynamics of thin layers of viscous flows inside another viscous fluid}
\author{Tania Pernas-Casta\~{n}o and Juan J. L. Vel\'azquez}
\begin{document}

\maketitle

\begin{abstract}
In this work we will study the dynamics of a thin layer of a viscous fluid which is embedded in the interior of another viscous fluid. The resulting
flow can be approximated by means of the solutions of a free boundary problem for the Stokes equation in which one of the unknowns is the shape of a curve which approximates the geometry of the thin layer of fluid. We also derive the equation yielding the thickness of this fluid. This model, that will be termed as the \textit{Geometric Free Boundary Problem}, will be derived using matched asymptotic expansions.
We will prove that the Geometric Free Boundary Problem is well posed and the solutions of the thickness equation are well defined (in particular they do not yield breaking of fluid layers) as long as the solutions of the Geometric Free Boundary Problem exist.  
\end{abstract}
\section{introduction}

In this paper we study the dynamics of a thin layer of a viscous fluid which
is embedded in the interior of other viscous fluid. Specifically, we are
interested in situations in which a thin layer of fluid $2$ is completely surrounded by a fluid $1$. Notice that, the fluid $2$ does not intersect  the boundaries of the container. See for instance Figure \ref{figdom}. We will restrict our analysis to two dimensional problems.

\begin{figure}[h]\label{figdom}
\center
\includegraphics[width=90mm]{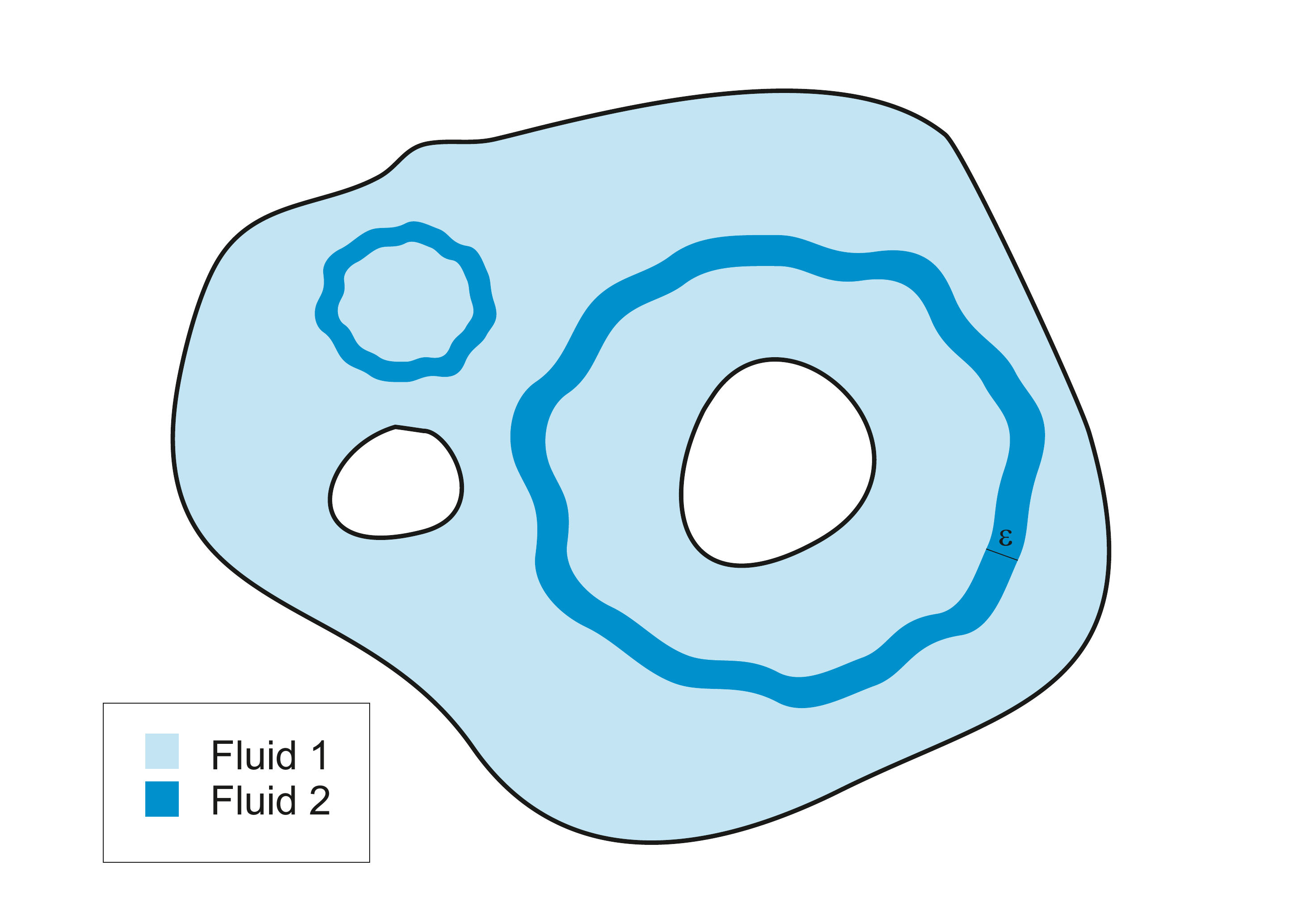}
\caption{Thin layer of a viscous fluid inside another viscous fluid}
\end{figure}

We have studied similar two fluid flows in the case in which the thin layer of fluid is in contact with one of the boundaries of the container in \cite{PV1}. In particular, we have studied the case in which the fluids are confined between two rotating coaxial cylinders, i.e. the so called Taylor-Couette geometry. It turns out that in the specific scaling limit and geometry considered in \cite{PV1}, the motion of the dominant fluid can be approximated by means of the classical one fluid Taylor-Couette flow and the dynamics of the interface which separates both fluids, can be approximated by means of a
suitable thin film like equation. Similar results for viscous flows in cylindrical geometries can be found in \cite{Taranets}.

On the contrary, we will show in this paper that, in situations like in Figure \ref{figdom}, the resulting flows can be approximated by means of the solutions of a free boundary problem for the Stokes equation in which one of the unknowns is the shape of a curve parametrized by $\Gamma(\alpha,t)$,  which approximates the geometry of the thin layer of fluid. The dynamics of the interface is dominated to the leading order by the capillary and viscous forces due to the fluid $1$. The resulting evolution equations for the interface are:
\begin{align*}
&\partial_t\Gamma=(u_0(\Gamma)\cdot n)n+\psi_0(\xi,t)\tau\\
&\psi_0(\xi,t)=\int_0\e{\xi}\kappa(r,t)u_0(\Gamma(r,t),t)\cdot n(r,t)dr
\end{align*}
where $u_0$ is the solution of the Stokes problem for some boundary conditions to the leading order when we tends the thickness of the fluid $2$ to zero. 

 We also derive an equation yielding the thickness of the layer of fluid $2$, $h(\alpha,t)$, with the form:
 
 \begin{align*}
\partial_t h(\xi,t)+(u_0(\Gamma(\xi,t))\cdot\tau(\xi,t)-\psi_0(\xi,t))\dx{h}(\xi,t)=(n(\xi,t)\nabla u_0(\Gamma(\xi,t))\cdot n(\xi,t))h(\xi,t)
\end{align*}

  This model that will be termed as the \textit{Geometric Free Boundary Problem} will be derived using matched asymptotic expansions. We will also prove in this paper local well posedness for the Geometric Free Boundary Problem. The well posedness of a free boundary problem for two viscous fluids where the velocity is given by the Navier-Stokes equation, has been extensively studied using tools of the Semi-group Theory (c.f. \cite{PrussNS}, \cite{PrussSimonet},\cite{Shibata} and \cite{Solonnik}). That problem has analogies with our Geometric Free Boundary Problem. The main difference is that we describe our problem by means of the Stokes equation instead of Navier-Stokes equation. The techniques used in this paper is based on the fact that the velocity field con be written in terms of the geometry of the curve by some operators that can be estimated using the Theory of Singular Integrals. This idea has been widely used in many problems, including water waves (c.f. \cite{interface_evol_water}), Muskat problem (c.f. \cite{granero_h2}, \cite{interface_evol_muskat}, \cite{pernas_inhomogeneous}), Surface Quasi-Geostrophic equation (c.f. \cite{gancedo_patch}), etc.

The plan of the paper is the following:

In Section \ref{S2} we explain in detail the specific two fluids flows that we consider and we collect some geometrical results that will be used later. In Section \ref{S3}, we derive the Geometric Free Boundary Problem using matched asymptotic expansions. We devote Section \ref{S4} to the study of the local well posedness for the Geometric Free Boundary Problem and the positivity of the function $h$ as long as the solutions exist. 

\section{Description and preliminaries of the problem}\label{S2}

Let be $\bar{D}\subset\R\e{2}$ a compact set with boundary $\partial D\in C\e{2}$. $D$ is completely filled with two fluids $1$ and $2$  which are immiscible and incompressible. We will assume that the fluid $2$ has viscosity $\mu_2$ and fills a thin open domain $\Omega_2(t)$. The meaning of thin domain will be made precise later. We will suppose also that the boundary of the domain $\Omega_2(t)$ does not intersects with $\partial D$.  The techniques used in this paper can be adapted to more general domains, as those in Figure \ref{figdom}, but in order to simplify the notation, we just consider the case in which the domain $D$ is connected. Moreover, we consider that the fluid $1$ has viscosity $\mu$ and it fills the disjoint open domains $\Omega_1(t)$ and $\Omega_3(t)$. Therefore (c.f. Figure \ref{figdom2}),
 $$D=Int(\overline{\Omega_1}(t)\cup\overline{\Omega_2}(t)\cup\overline{\Omega_3}(t))\quad\textit{with}\quad\Omega_j(t)\cap\Omega_l(t)=\emptyset\quad\textit{for}\quad j\neq l\quad\textit{and}\quad j,l=1,2,3$$
On the other hand, we will denote the non-connected open set  
\begin{equation}\label{omega}
\Omega(t)=\Omega_1(t)\cup\Omega_2(t)\cup\Omega_3(t)
\end{equation} 
  
\begin{figure}[h]\label{figdom2}
\center
\includegraphics[width=90mm]{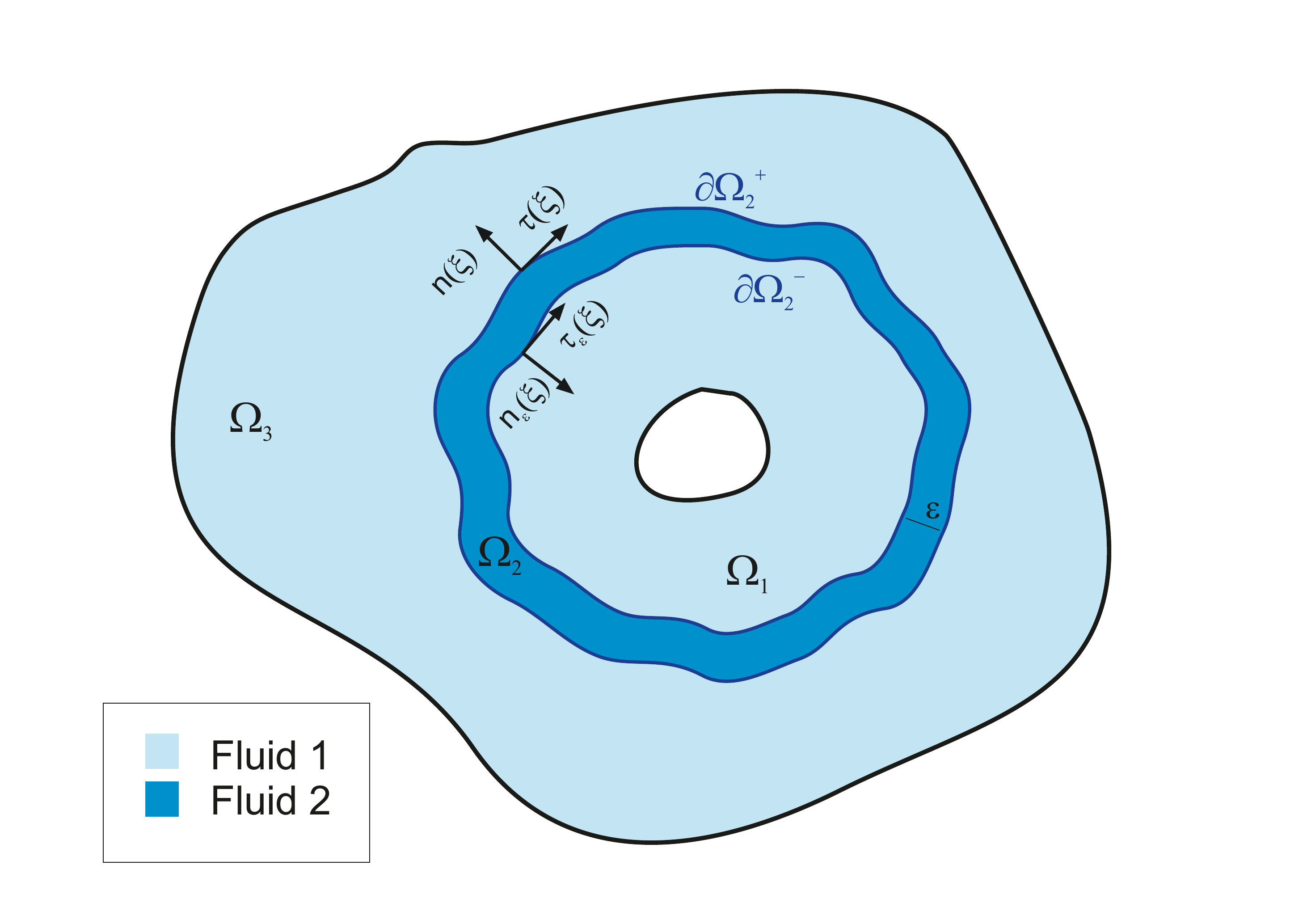}
\caption{Thin layer of a viscous fluid inside another viscous fluid}
\end{figure}
 
  The boundary of the domain $\Omega_2(t)$ consists in two curves $\partial\Omega_2\e{-}(t)$ and $\partial\Omega_2\e{+}(t)$. 
  The velocity of our problem is given by Stokes equations:
\begin{equation}\label{stokesepsilon}
\mu(x)\Delta u=\nabla p,\quad\nabla\cdot u=0\quad\textit{in}\quad \Omega_{k}(t)\quad\textit{for}\quad k=1,2,3
\end{equation}
where 
\begin{equation}\label{viscosidad}
\mu(x)=
\begin{cases}
  \mu & \mbox{if}\quad x\in\Omega_1(t)\cup\Omega_3(t) \\
  \mu_2 & \mbox{if}\quad x\in\Omega_2(t).
\end{cases}
\end{equation}

Given any function $f$ defined in $\Omega(t)$ as in \eqref{omega}, we will denote as $f\e{k}=f|_{\Omega_k(t)}$. We will impose the following boundary conditions:

\begin{align}\label{cond1}
&u=U\quad\textit{smooth in}\quad \partial D,\\\label{cond2}
&u\e{3}\cdot n=u\e{2}\cdot n\quad\textit{at}\quad\partial\Omega_2\e{+},\\\label{cond3}
&u\e{3}\cdot \tau=u\e{2}\cdot \tau\quad\textit{at}\quad\partial\Omega_2\e{+},\\\label{cond4}
&u\e{1}\cdot n=u\e{2}\cdot n\quad\textit{at}\quad\partial\Omega_2\e{-},\\\label{cond5}
&u\e{1}\cdot \tau=u\e{2}\cdot \tau\quad\textit{at}\quad\partial\Omega_2\e{-},\\\label{cond6}
&(\Sigma\e{3}-\Sigma\e{2})n=-\kappa n\quad\textit{in}\quad\partial\Omega_2\e{+},\\\label{cond7}
&(\Sigma\e{2}-\Sigma\e{1})n=-\kappa n\quad\textit{in}\quad\partial\Omega_2\e{-},
\end{align}
where $U\in H\e{\frac{1}{2}}(\partial D)$, $\int_{\partial D}U\cdot n=0$ on $\partial D$ and the normal vector $n$ is chosen as the outer normal to $\Omega_2(t)$ both in \eqref{cond2} and \eqref{cond4}. We denote in \eqref{cond3} and \eqref{cond5} as $\tau$ the tangent vector to the boundary and $\kappa$ is the curvature of the boundary. 
Here we considered the surface tension coefficient is equals to 1. $\Sigma^k$ represents the stress tensor corresponding to each domain $\Omega_k(t)$, namely:
\begin{equation}\label{sigmas}
\begin{cases}
\Sigma\e{1}=-p\e{1}I+\mu(\nabla u\e{1}+(\nabla u\e{1})\e{T})\\
\Sigma\e{2}=-p\e{2}I+\mu_2(\nabla u\e{2}+(\nabla u\e{2})\e{T})\\
\Sigma\e{3}=-p\e{3}I+\mu(\nabla u\e{3}+(\nabla u\e{3})\e{T})
\end{cases}
\end{equation}
  
  The evolution of the interfaces $\partial\Omega_2\e{+}(t)$ and $\partial\Omega_2\e{-}(t)$ is given by 
  \begin{align}\label{ecevol3}
  &V_n=u\e{3}\cdot n=u\e{2}\cdot n\quad\textit{at}\quad\partial\Omega_2\e{+}\\\label{ecevol1}
  &V_n=u\e{1}\cdot n=u\e{2}\cdot n\quad\textit{at}\quad\partial\Omega_2\e{-}
  \end{align}
  where $ V_n$ is the normal velocity of the interfaces in \eqref{cond2} and \eqref{cond4}.

In the rest of this Section, we will drop the dependence of time $t$ for simplicity.  We will assume that the boundaries $\partial\Omega_2\e{+}$ and $\partial\Omega_2\e{-}$ are $C\e{2}$ Jordan curves. We introduce a parameter $\varepsilon>0$ which gives the order of magnitude of the thickness of the domain $\Omega_2$. We represent the external boundary $\partial\Omega_2\e{+}$ using the arc length parametrization as follows:

\begin{align}\label{gama}
\Gamma: \R\to D\subset\R\e{2},\quad\xi\to\Gamma(\xi),\quad\Gamma\in C\e{2}(\R;D)
\end{align}
where $\abs{\dx{}\Gamma}=1$. We assume that the curve $\partial\Omega_2\e{+}$ is followed in the clockwise sense for increasing $\xi$. The function $\Gamma$ is periodic with period $L$, which is the total length of the curve $\partial\Omega_2\e{+}$. Notice that $L$ is a function of $t$ although this dependence will not be written in the following.  The tangent vector to $\partial\Omega_2\e{+}$ is $\tau(\xi)=\partial_\xi\Gamma(\xi)$ and the outer normal vector $n(\xi)$ is given by $n(\xi)=\mathbb{A}\tau(\xi)$ with
\begin{equation}\label{matrizA}
\mathbb{A} = \;
   \begin{pmatrix}
      0 & -1 \\
      1 & 0  \\
   \end{pmatrix}
\end{equation}

 We have chosen the sign of the curvature in \eqref{cond6} in such a way that  

\begin{equation}\label{derivnormal}
\partial_\xi n(\xi)=-\kappa(\xi)\tau(\xi),\quad \partial_\xi \tau(\xi)=\kappa(\xi)n(\xi)
\end{equation} 
 
being $\kappa(\xi)$ the curvature of $\Gamma$. 

On the other hand, we parametrize the internal boundary $\partial\Omega_2\e{-}$ by means of a function 
\begin{align*}
\Gamma_\varepsilon: \R\to D\subset\R\e{2},\quad\xi\to\Gamma_\varepsilon(\xi),\quad\Gamma_\varepsilon\in C\e{2}(\R;D)
\end{align*}

In order to obtain a domain $\Omega_2$ with thickness of order $\varepsilon$, we will assume that $\partial\Omega_2\e{-}$ is parametrized as:
\begin{equation}\label{gamaepsilon}
\Gamma_\varepsilon(\xi)=\Gamma(\xi)-\varepsilon h(\xi)n(\xi)
\end{equation}
where $h\in C\e{2}(\R)$ and $h(\xi)=h(\xi+L)$, $h>0$.

 The tangent vector to $\partial\Omega_2\e{-}$ is then given by $\tau_\varepsilon(\xi)=\frac{\partial_\xi\Gamma_\varepsilon(\xi)}{\norm{\dx{\Gamma}_\varepsilon(\xi)}}$ and the outer normal vector is $n_\varepsilon(\xi)=\mathbb{A}\e{T}\tau_\varepsilon(\xi)=-\mathbb{A}\tau_\varepsilon(\xi)$ with $\mathbb{A}$ in \eqref{matrizA}. Due to the change of the orientation of the base $\{\tau_\varepsilon(\xi), n_\varepsilon(\xi)\}$ with respect to $\{\tau(\xi), n(\xi)\}$  we have that (c.f. \eqref{derivnormal})
 
 \begin{equation}\label{derivnormaleps}
 \dx{n}_\varepsilon(\xi)=\kappa_\varepsilon(\xi)\tau_\varepsilon(\xi),\quad\dx{\tau}_\varepsilon(\xi)=-\kappa_\varepsilon(\xi)n_\varepsilon(\xi)
 \end{equation}
The vectors $\tau_\varepsilon$, $n_\varepsilon$ and the curvature $\kappa_\varepsilon$ can be approximated in terms of $\tau$, $n$ and $\kappa$ as follows: 
\begin{lem}\label{L1} Suppose that $\Gamma\in C\e{2}(\R;D)$ and $h\in C\e{2}(\R)$ with $\Gamma(\xi)=\Gamma(\xi+L)$, $h(\xi)=h(\xi+L)$ for some $L>0$. Then 
 \begin{align}
 &\tau_\varepsilon(\xi)=\tau(\xi)-\varepsilon \dx{h}(\xi)n(\xi)+\mathcal{O}(\varepsilon\e{2})\label{tangenteepsilon}\\
 &n_\varepsilon(\xi)=-n(\xi)-\varepsilon \dx{h}(\xi)\tau(\xi)+\mathcal{O}(\varepsilon\e{2})\label{normalepsilon}\\
 &\kappa_\varepsilon(\xi)=\kappa(\xi)-\varepsilon \dxx{h}(\xi)+\mathcal{O}(\varepsilon\e{2})\label{curvaturaepsilon}
 \end{align}
 where the error terms $\mathcal{O}(\varepsilon\e{2})$ depend on $\norm{\Gamma}_{C\e{2}}$ and $\norm{h}_{C\e{2}}$.
\end{lem}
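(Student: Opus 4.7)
The plan is to derive all three expansions from the defining relation \eqref{gamaepsilon} by a direct Taylor expansion in $\varepsilon$, using the Frenet-type formulas \eqref{derivnormal} for the base curve. There is no real obstacle; this is a bookkeeping exercise, but I want to be careful about how the remainders are controlled.

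First I would compute $\partial_\xi\Gamma_\varepsilon$ by differentiating \eqref{gamaepsilon} and substituting $\partial_\xi n=-\kappa\tau$ from \eqref{derivnormal}, which yields
\[
\partial_\xi\Gamma_\varepsilon(\xi)=(1+\varepsilon h(\xi)\kappa(\xi))\tau(\xi)-\varepsilon\,\partial_\xi h(\xi)\,n(\xi).
\]
Taking the Euclidean norm and expanding the square root gives $\norm{\partial_\xi\Gamma_\varepsilon}=1+\varepsilon h\kappa+\mathcal O(\varepsilon^2)$, hence $\norm{\partial_\xi\Gamma_\varepsilon}^{-1}=1-\varepsilon h\kappa+\mathcal O(\varepsilon^2)$, with the $\mathcal O(\varepsilon^2)$ depending only on $\norm{h}_{C^1}$ and $\norm{\kappa}_{L^\infty}\le\norm{\Gamma}_{C^2}$. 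Multiplying these two factors produces \eqref{tangenteepsilon}: the leading-order contribution $\tau$ survives from the $(1+\varepsilon h\kappa)\tau$ term, the $\varepsilon h\kappa$ in the numerator cancels against $-\varepsilon h\kappa$ in the reciprocal, and the $-\varepsilon\partial_\xi h\,n$ term passes through unchanged at first order.

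For the normal I would simply apply $-\mathbb{A}$ to the expansion of $\tau_\varepsilon$, using $\mathbb{A}\tau=n$ together with $\mathbb{A}^2=-I$ (so $\mathbb{A}n=-\tau$). This gives
\[
n_\varepsilon=-\mathbb{A}\tau_\varepsilon=-n-\varepsilon\,\partial_\xi h\,\tau+\mathcal O(\varepsilon^2),
\]
which is \eqref{normalepsilon}.

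For the curvature I would invoke \eqref{derivnormaleps}: since $\tau_\varepsilon$ is a unit vector and $n_\varepsilon\perp\tau_\varepsilon$, the identity $\partial_\xi n_\varepsilon=\kappa_\varepsilon\tau_\varepsilon$ implies $\kappa_\varepsilon=\partial_\xi n_\varepsilon\cdot\tau_\varepsilon$. Differentiating \eqref{normalepsilon} and using \eqref{derivnormal} once more gives
\[
\partial_\xi n_\varepsilon=(\kappa-\varepsilon\,\partial_\xi^2 h)\tau-\varepsilon\kappa\,\partial_\xi h\,n+\mathcal O(\varepsilon^2),
\]
and pairing this with $\tau_\varepsilon=\tau-\varepsilon\,\partial_\xi h\,n+\mathcal O(\varepsilon^2)$ (so $\tau\cdot\tau_\varepsilon=1+\mathcal O(\varepsilon^2)$ and $n\cdot\tau_\varepsilon=-\varepsilon\,\partial_\xi h+\mathcal O(\varepsilon^2)$) yields $\kappa_\varepsilon=\kappa-\varepsilon\,\partial_\xi^2 h+\mathcal O(\varepsilon^2)$, which is \eqref{curvaturaepsilon}. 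At each step the remainder involves at most $h$, $\partial_\xi h$, $\partial_\xi^2 h$, $\tau$, $\partial_\xi\tau=\kappa n$ and $\partial_\xi n=-\kappa\tau$, so the error constants depend only on $\norm{\Gamma}_{C^2}$ and $\norm{h}_{C^2}$, as claimed.
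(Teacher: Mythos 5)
Your argument is correct and follows essentially the same route as the paper: the exact expression for $\partial_\xi\Gamma_\varepsilon$ with the normalization factor, Taylor expansion in $\varepsilon$ for \eqref{tangenteepsilon}, application of $-\mathbb{A}$ for \eqref{normalepsilon}, and a Frenet relation for the tilted frame to extract \eqref{curvaturaepsilon} (the paper dots $\partial_\xi\tau_\varepsilon=-\kappa_\varepsilon n_\varepsilon$ against the base frame after differentiating the exact quotient \eqref{taueps1}, which is the same computation as your $\kappa_\varepsilon=\partial_\xi n_\varepsilon\cdot\tau_\varepsilon$). The only step to tighten is that in the curvature part you differentiate the expansion \eqref{normalepsilon} term by term, and an $\mathcal{O}(\varepsilon^2)$ remainder cannot in general be differentiated; here it is harmless because the remainder is the explicit difference between the closed formula for $n_\varepsilon$ and its first-order part, so you should either differentiate that exact expression (as the paper does) or note explicitly that its $\xi$-derivative is again $\mathcal{O}(\varepsilon^2)$.
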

\begin{proof}
Using \eqref{derivnormal} and \eqref{gamaepsilon} we obtain
\begin{equation}\label{taueps1}
\tau_\varepsilon(\xi)=\frac{(1+\varepsilon h(\xi)\kappa(\xi))\tau(\xi)-\varepsilon\dx{h}(\xi)n(\xi)}{((1+\varepsilon h(\xi)\kappa(\xi))\e{2}+\varepsilon\e{2}(\dx{h}(\xi))\e{2})\e{\frac{1}{2}}}.
\end{equation}

Therefore Taylor theorem implies \eqref{tangenteepsilon}. Using that $n(\xi)=\mathbb{A}\tau(\xi)$ and $n_\varepsilon(\xi)=-\mathbb{A}\tau_\varepsilon(\xi)$
\begin{displaymath}
n_\varepsilon(\xi)=-\frac{\varepsilon\dx{h}(\xi)\tau(\xi)+(1+\varepsilon h(\xi)\kappa(\xi))n(\xi)}{((1+\varepsilon h(\xi)\kappa(\xi))\e{2}+\varepsilon\e{2}(\dx{h}(\xi))\e{2})\e{\frac{1}{2}}}
\end{displaymath}
whence \eqref{normalepsilon} follows.

Differentiating \eqref{taueps1} and using the second equations in \eqref{derivnormal} and \eqref{derivnormaleps}, as well as \eqref{normalepsilon} and multiplying by $\tau(\xi)$, we obtain \eqref{curvaturaepsilon}.
\end{proof}

We will use repeatedly that if the integrals of some functions by some particular combinations of derivatives vanish, then some combinations of these functions vanish too. More precisely we have,

\begin{lem}\label{lemavarphi}
Let $\sigma\subset D$ a $C\e{1}$- Jordan curve and  $M\in C\e{1}(\sigma;M_{2\times 2}(\R))$ and $a,b\in C(\sigma;\R)$. If
\begin{align}\label{intabc1}
&\int_{\sigma}M:(\nabla\varphi+\nabla\varphi\e{T})ds+\int_{\sigma}a(\tau\cdot\varphi) ds+\int_{\sigma}b(n\cdot\varphi) ds=0
\end{align}
for all $\varphi\in C\e{\infty}_c(D;\R\e{2})$ then
\begin{align}\label{conclusionlema}
\tau\cdot(M+M\e{T})n=0,\quad n\cdot(M+M\e{T})n=0,\quad a-\dx{}(\tau\cdot(M+M\e{T})\tau)=0,\quad  \quad b-\kappa\tau\cdot(M+M\e{T})\tau=0.
\end{align} 
\end{lem}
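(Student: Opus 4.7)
The plan is to reduce the weak identity \eqref{intabc1} to four scalar relations by decomposing every quantity on $\sigma$ in the orthonormal frame $\{\tau,n\}$ and exploiting the freedom available in the trace and first normal derivative of $\varphi$ along $\sigma$.

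First, writing $S:=M+M\e{T}$, note that since $S$ is symmetric one has $M:(\nabla\varphi+\nabla\varphi\e{T})=S:\nabla\varphi$. Parametrize $\sigma$ by arc length and write any test field restricted to $\sigma$ as $\varphi=\alpha\tau+\beta n$ with $\alpha=\tau\cdot\varphi$, $\beta=n\cdot\varphi$. Expanding $S:\nabla\varphi$ in the basis $\{\tau,n\}$ and using \eqref{derivnormal} to differentiate $\tau$ and $n$ tangentially gives
\begin{align*}
S:\nabla\varphi\big|_{\sigma}
&=S_{\tau\tau}\bigl(\dx{\alpha}-\kappa\beta\bigr)
+S_{\tau n}\bigl(\dx{\beta}+\kappa\alpha+\tau\cdot\partial_n\varphi\bigr)
+S_{nn}\,n\cdot\partial_n\varphi,
\end{align*}
where $S_{\tau\tau}=\tau\cdot S\tau$, etc., and $\partial_n$ denotes the derivative in the normal direction to $\sigma$. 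Here the terms involving $\partial_n\varphi$ appear because $\nabla\varphi$ is the full planar gradient on $\sigma$, not just the tangential one.

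Next, the key structural observation is that on the Jordan curve $\sigma\subset D$ the four scalar fields
\[
\alpha,\;\beta,\;\gamma:=\tau\cdot\partial_n\varphi,\;\delta:=n\cdot\partial_n\varphi
\]
can be prescribed as arbitrary smooth $L$-periodic functions on $\sigma$ independently of one another, by constructing $\varphi\in C\e{\infty}_c(D;\R\e{2})$ via tubular coordinates near $\sigma$ and a cutoff supported away from $\partial D$. Substituting the above expansion into \eqref{intabc1} yields
\[
\int_\sigma\Bigl[S_{\tau\tau}\dx{\alpha}+\bigl(a+\kappa S_{\tau n}\bigr)\alpha+\bigl(b-\kappa S_{\tau\tau}\bigr)\beta+S_{\tau n}\dx{\beta}+S_{\tau n}\gamma+S_{nn}\delta\Bigr]ds=0.
\]
Varying $\delta$ alone gives $S_{nn}=0$; varying $\gamma$ alone gives $S_{\tau n}=0$; this is the first pair of conclusions in \eqref{conclusionlema}.

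With $S_{\tau n}$ already known to vanish, the identity collapses to
$\int_\sigma[S_{\tau\tau}\dx{\alpha}+a\alpha+(b-\kappa S_{\tau\tau})\beta]\,ds=0$.
Varying $\beta$ alone forces $b=\kappa S_{\tau\tau}$, which is the last relation in \eqref{conclusionlema}. Finally, varying $\alpha$ alone and integrating by parts, using that $\sigma$ is a closed Jordan curve so that no boundary terms appear, yields $a=\dx{}(S_{\tau\tau})$, completing the proof. The only genuinely delicate point is the independent prescription of $(\alpha,\beta,\gamma,\delta)$ by a $C\e{\infty}_c(D)$-field, but this is routine via a tubular neighborhood of $\sigma$ and a cutoff, since $\sigma$ is a $C\e{1}$ Jordan curve strictly contained in the open set $D$.
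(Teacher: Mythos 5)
Your argument is correct and follows essentially the same route as the paper's proof: you decompose $\nabla\varphi$ on $\sigma$ in the $\{\tau,n\}$ frame exactly as in \eqref{phi11}--\eqref{phi22}, then exploit test functions built in tubular coordinates whose trace and normal derivative along $\sigma$ can be prescribed independently (the paper's explicit $\chi(z)=z$ and $\chi(z)/z$ constructions), first killing the $\tau\cdot(M+M^{T})n$ and $n\cdot(M+M^{T})n$ components and then identifying $a$ and $b$, with the only cosmetic difference that you integrate the $S_{\tau\tau}\partial_{\xi}\alpha$ term by parts at the end rather than at the outset. No gaps beyond the regularity conventions already implicit in the paper's statement.
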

\begin{proof} We parametrize the curve $\sigma$ using the arc length parametrization $\lambda:\R\to D$.
Then, for each $\xi\in\R$ we can defined a orthogonal base $\{\tau(\xi),n(\xi)\}$ where $\tau=\dx{}\lambda(\xi)$ and $n(\xi)=\mathbb{A}\tau(\xi)$ for $\mathbb{A}$ in \eqref{matrizA}. 

Therefore, for each $\xi\in\R$ we can represent $\nabla\varphi=\varphi_{11}(\tau\otimes\tau)+\varphi_{12}(\tau\otimes n)+\varphi_{21}(n\otimes\tau)+\varphi_{22}(n\otimes n)$ with
\begin{align}\label{phi11}
&\varphi_{11}=\tau\nabla\varphi\cdot\tau=\dx(\tau\cdot\varphi)-\kappa\varphi\cdot n\\\label{phi12}
&\varphi_{12}=\tau\nabla\varphi\cdot n=\tau\cdot\frac{\partial\varphi}{\partial n}\\\label{phi21}
&\varphi_{21}=n\nabla\varphi\cdot\tau=\dx(n\cdot\varphi)+\kappa\varphi\cdot\tau\\\label{phi22}
&\varphi_{22}=n\nabla\varphi\cdot n=n\cdot\frac{\partial\varphi}{\partial n}
\end{align}
where we have used that $\nabla\varphi_j\cdot\tau=\frac{\partial\varphi}{\partial \xi}$, $\nabla\varphi_j\cdot n=\frac{\partial\varphi}{\partial n}$ for $j=1,2$ and \eqref{derivnormal}.
Thus,
\begin{equation}\label{intabc2}
M:(\nabla\varphi+(\nabla\varphi)\e{T})=2m_{11}(\dx(\tau\cdot\varphi)-\kappa n\cdot\varphi)+(m_{12}+m_{21})(\tau\cdot\frac{\partial\varphi}{\partial n}+\dx(n\cdot\varphi)+\kappa\tau\cdot\varphi)+2m_{22}(n\cdot\frac{\partial\varphi}{\partial n})
\end{equation}
then using \eqref{intabc2} we can rewrite \eqref{intabc1} as
\begin{align}\label{intabc}
&\int_{\sigma}(m_{12}+m_{21})(\tau\cdot\frac{\partial\varphi}{\partial n})ds+2\int_{\sigma}m_{22}(n\cdot\frac{\partial\varphi}{\partial n})ds+\int_{\sigma}\tilde{a}(\tau\cdot\varphi)ds-\int_{\sigma}\tilde{b}(n\cdot\varphi)ds=0
\end{align}
for all $\varphi\in C_c\e{\infty}(\bar{D}:\R\e{2})$ where $\tilde{a}=\kappa(m_{12}+m_{21})-2\dx{}m_{11}+a$ and $\tilde{b}=2\kappa m_{11}+\dx{}(m_{12}+m_{21})-b$.

We can parametrize a neighbourhood of the curve $\sigma$ in the form $\textbf{x}=\lambda(\xi)+z n(\xi)$ with $z\in(-\delta,\delta)$ for $\delta$ sufficiently small. The mapping $(\xi,z)\to \textbf{x}$ is invertible in $(\xi_0-\delta,\xi_0+\delta)\times(-\delta,\delta)$, $\xi_0\in\R$ and $\delta$ sufficiently small, due to the Inverse Function Theorem.

We can then define a test function $\varphi\in C_c\e{\infty}(\bar{D};\R\e{2})$ by means of $\varphi(\textbf{x})=\chi(z)\psi(\xi)$ for $\chi\in C\e{\infty}_c((-\delta,\delta);\R)$ with $\chi(z)=z$ for $\abs{z}\leq \frac{\delta}{2}$ and $\psi\in C_c\e{\infty}((\xi_0-\delta,\xi_0+\delta);\R\e{2})$ arbitrary. Therefore $\varphi|_{\sigma}=0$ and $\frac{\partial\varphi}{\partial n}|_{\sigma}=\psi(\xi)$ whence \eqref{intabc} reduces to
\begin{align*}
&\int_{\sigma}(m_{12}+m_{21})(\tau\cdot\psi)ds+2\int_{\sigma}m_{22}(n\cdot\psi)ds=0
\end{align*}
which may be seen to be valid for any $\psi\in C_c((\xi_0-\delta,\xi_0+\delta);\R\e{2})$ using a density argument. Choosing $\psi$ in such a way that $\psi\cdot\tau=0$ or $\psi\cdot n=0$, we arrive at $m_{12}+m_{21}=0$ and $m_{22}=0$ respectively. Thus, the first two identities in \eqref{conclusionlema} follow and \eqref{intabc} becomes
\begin{displaymath}
\int_{\sigma}\tilde{a}(\tau\cdot\varphi)ds-\int_{\sigma}\tilde{b}(n\cdot\varphi)ds=0
\end{displaymath}
for all $\varphi\in C_c\e{\infty}(\bar{D}:\R\e{2})$ with $\tilde{a}=-2\dx{}m_{11}+a$ and $\tilde{b}=2\kappa m_{11}-b$.

Now taking $\varphi(\xi,z)=\frac{\chi(z)}{z}\psi(\xi)$ with $\chi$ and $\psi$ as above, we can conclude similarly that $\tilde{a}|_{\sigma}=\tilde{b}|_{\sigma}=0$, hence the result follows. 
\end{proof}
We now reformulate \eqref{stokesepsilon}-\eqref{cond7} using machinery of the theory of distributions. Given any open set $O\subset\R\e{2}$, we will denote as $\mathcal{D}'(O;\R\e{2})$  the space of distributions (c.f. \cite{distrib}). We will use respectively the following class of functions:
 Suppose that $\Omega$ is an open set that can be written as an union $\bigcup_{j=1}\e{N}O_j$, $N\in\N$. We will denote as $X(\Omega)$ the space of continuous functions in $\Omega$ having a limit at the boundaries of each domain $O_j$, more precisely for any $k=0,1,2,\cdots$ we define
\begin{displaymath}
X\e{k}(\Omega)=\{f\in C\e{k}(\Omega): f(x)=f_j(x), x\in O_j, f_j\in C\e{k}(\overline{O_j}), j=1,\cdots,N\}
\end{displaymath} 
$X\e{k}(\Omega;\R\e{2})$ is defined similarly for vector value functions. 
\begin{lem}\label{distristokeseps} Let $\Omega$ as in \eqref{omega}. Suppose that $\partial\Omega_2\e{+}$ and $\partial\Omega_2\e{-}$ are in $C\e{2}$ with $\partial\Omega_2\e{+}\cap\partial\Omega_2\e{-}=\emptyset$ and are parametrized as in \eqref{gama} and \eqref{gamaepsilon}, respectively. Let be $u\in C\e{2}(\Omega;\R\e{2})\cap C(\bar{D};\R\e{2})\cap X\e{1}(\Omega;\R\e{2})$ and $u=U$ in $\partial D$. The function $p\in C\e{1}(\Omega)\cap X\e{0}(\Omega)$. We define the functionals $\s_\varepsilon(u,p)\in\mathcal{D}'(D;\R\e{2})$ as
\begin{align}\label{operadorepsilon}
<\s_\varepsilon(u,p),\varphi>=&\int_{D}[\mu(x)u\cdot(\Delta\varphi+\nabla\cdot\nabla\varphi\e{T})+p\nabla\cdot\varphi]dV+\\\nonumber
&+(\mu-\mu_2)\int_{\om{+}}u\cdot[(\nabla\varphi+\nabla\varphi\e{T})n]ds+(\mu-\mu_2)\int_{\om{-}}u\cdot[(\nabla\varphi+\nabla\varphi\e{T})n_\varepsilon]ds
\end{align}  
for $\varphi\in C\e{\infty}_c(D;\R\e{2})$ and $\mu(x)$ as in \eqref{viscosidad}.

Then the boundary value problem \eqref{stokesepsilon} with boundary conditions \eqref{cond1}-\eqref{cond7} holds if and only if
\begin{equation}\label{stokesdist}
\s_\varepsilon(u,p)=\kappa_\varepsilon n_\varepsilon\delta_{\om{-}}-\kappa n\delta_{\om{+}}, \quad\nabla\cdot u=0\quad\textit{in}\quad \mathcal{D}'(D,\R\e{2})
\end{equation}
where the distributions $n_\varepsilon\delta_{\om{-}}, n\delta_{\om{+}}\in\mathcal{D}'(D,\R\e{2})$ are defined by means of  $<n_\varepsilon\delta_{\om{-}},\varphi>=\int_{\om{-}}n_\varepsilon\cdot\varphi ds$ and $<n\delta_{\om{-}},\varphi>=\int_{\om{+}}n\cdot\varphi ds$ with $\varphi\in C\e{\infty}_c(\bar{D},\R\e{2})$.

\end{lem}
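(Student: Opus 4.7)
The plan is to establish both implications by integration by parts on each subdomain $\Omega_k$, $k=1,2,3$, exploiting the fact that the $(\mu-\mu_2)$ boundary terms in the definition of $\s_\varepsilon$ are designed precisely to cancel the derivative-of-$\varphi$ boundary contributions that arise from the volume integral.

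For the forward implication, assume \eqref{stokesepsilon}-\eqref{cond7} and fix $\varphi\in C\e{\infty}_c(D;\R\e{2})$. Writing the volume integral in $\s_\varepsilon$ as $\sum_k\int_{\Omega_k}[\mu_k u\e{k}\cdot\nabla\cdot(\nabla\varphi+\nabla\varphi\e{T})+p\e{k}\nabla\cdot\varphi]dV$ and performing two integrations by parts on each $\Omega_k$, the interior contributions become $\int_{\Omega_k}\varphi\cdot(\mu_k\Delta u\e{k}-\nabla p\e{k})dV$, which vanish by \eqref{stokesepsilon} together with $\nabla\cdot u\e{k}=0$. The remaining boundary term on $\partial\Omega_k$ is $\int_{\partial\Omega_k}[\mu_k u\e{k}\cdot(\nabla\varphi+\nabla\varphi\e{T})n_k-\varphi\cdot\Sigma\e{k}n_k]ds$, where $n_k$ is the outer normal to $\Omega_k$. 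Summing over $k$, the interface $\om{+}$ is traversed with normal $n$ on the $\Omega_2$-side and $-n$ on the $\Omega_3$-side, and similarly $\om{-}$ with $n_\varepsilon$ and $-n_\varepsilon$. Using the continuity of $u$ on $\bar D$ (together with \eqref{cond2}-\eqref{cond5}), the derivative-of-$\varphi$ pieces collect to $(\mu_2-\mu)\int_{\om{+}}u\cdot(\nabla\varphi+\nabla\varphi\e{T})n\,ds$ plus the analogous expression on $\om{-}$, which are exactly annihilated by the added $(\mu-\mu_2)$ terms in $\s_\varepsilon$. The surviving pieces read $\int_{\om{+}}\varphi\cdot(\Sigma\e{3}-\Sigma\e{2})n\,ds+\int_{\om{-}}\varphi\cdot(\Sigma\e{1}-\Sigma\e{2})n_\varepsilon\,ds$, and by the jump conditions \eqref{cond6}-\eqref{cond7} this equals $-\int_{\om{+}}\kappa n\cdot\varphi\,ds+\int_{\om{-}}\kappa_\varepsilon n_\varepsilon\cdot\varphi\,ds$, as required. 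The distributional divergence $\nabla\cdot u=0$ in $\mathcal{D}'(D)$ follows in parallel: integrating $\int_D u\cdot\nabla\psi\,dV$ by parts over each $\Omega_k$ leaves surface contributions on the two interfaces that cancel by the normal-velocity continuity \eqref{cond2} and \eqref{cond4}.

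For the reverse implication, assume \eqref{stokesdist} and proceed in three steps. Testing first against $\varphi\in C\e{\infty}_c(\Omega_k)$ (so that both the boundary terms in $\s_\varepsilon$ and the entire right-hand side vanish) and integrating by parts twice yields $\int_{\Omega_k}\varphi\cdot(\mu_k\Delta u\e{k}-\nabla p\e{k})dV=0$, giving \eqref{stokesepsilon} in each $\Omega_k$; the distributional divergence condition immediately gives $\nabla\cdot u\e{k}=0$ classically. The boundary conditions \eqref{cond2}-\eqref{cond5} are provided directly by the hypothesis $u\in C(\bar D;\R\e{2})$. Finally, to recover the stress jumps, take $\varphi\in C\e{\infty}_c(D;\R\e{2})$ supported in a neighbourhood of $\om{+}$ disjoint from $\om{-}$; repeating the integration by parts of the forward direction and using the already-established interior equations and continuity of $u$, the identity $\langle\s_\varepsilon(u,p),\varphi\rangle=-\int_{\om{+}}\kappa n\cdot\varphi\,ds$ reduces to $\int_{\om{+}}\varphi\cdot[(\Sigma\e{3}-\Sigma\e{2})n+\kappa n]\,ds=0$. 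Since $\varphi|_{\om{+}}$ can be prescribed arbitrarily by extension, \eqref{cond6} follows pointwise, and the symmetric argument on $\om{-}$ gives \eqref{cond7}.

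The delicate point throughout is the bookkeeping of signs: keeping straight the outer normals of the three subdomains along the two interfaces, the opposite sign conventions $\dx{n}=-\kappa\tau$ on $\om{+}$ versus $\dx{n}_\varepsilon=\kappa_\varepsilon\tau_\varepsilon$ on $\om{-}$, and verifying that the $(\mu-\mu_2)$ boundary terms in the definition of $\s_\varepsilon$ cancel the derivative-of-$\varphi$ contributions exactly — this cancellation is in fact the whole motivation for the specific form of $\s_\varepsilon$. Once the signs are under control, both directions reduce to Green's identities and the arbitrariness of the test function.
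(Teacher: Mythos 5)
Your proof is correct and follows essentially the same route as the paper's: integrate by parts twice on each subdomain, use the continuity of $u$ so that the interface terms involving $\nabla\varphi+\nabla\varphi\e{T}$ cancel exactly against the $(\mu-\mu_2)$ terms built into $\s_\varepsilon$, and identify the surviving stress-jump integrals with the curvature terms via \eqref{cond6}--\eqref{cond7}, reversing the steps for the converse. The only cosmetic difference is at the end of the converse: where you recover \eqref{cond6}--\eqref{cond7} by prescribing $\varphi$ arbitrarily on each interface, the paper invokes Lemma \ref{lemavarphi} (in the case $M=0$), which amounts to the same localization argument.
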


\begin{proof} Suppose first that \eqref{stokesepsilon}-\eqref{cond7} holds.
The first integral on the right in \eqref{operadorepsilon} can be written as the sum
\begin{align*}
&\int_{\Omega}[\mu(x)u\cdot(\Delta\varphi+\nabla\cdot\nabla\varphi\e{T})+p\nabla\cdot\varphi]dV=\int_{\Omega\e{1}\cup\Omega\e{3}}[\mu u\cdot(\Delta\varphi+\nabla\cdot\nabla\varphi\e{T})+p\nabla\cdot\varphi]dV\\
&+\int_{\Omega\e{2}}[\mu_2 u\e{2}\cdot(\Delta\varphi+\nabla\cdot\nabla\varphi\e{T})+p\nabla\cdot\varphi]dV
\end{align*}

Integrating by parts the right hand side of this equations and using \eqref{sigmas} it becomes:
\begin{align*}
&\int_{\Omega\e{1}\cup\Omega\e{3}}[\mu\nabla\cdot(\nabla u+\nabla u\e{T})-\nabla p]\cdot\varphi dV-\mu\int_{\om{+}}u\e{3}\cdot[(\nabla\varphi+\nabla\varphi\e{T})n]ds-\mu\int_{\om{-}}u\e{1}\cdot[(\nabla\varphi+\nabla\varphi\e{T})n_\varepsilon]ds\\
&\int_{\Omega\e{2}}[\mu_2\nabla\cdot(\nabla u\e{2}+(\nabla u\e{2})\e{T})-\nabla p\e{2}]\cdot\varphi dV+\mu_2\int_{\om{+}}u\e{2}\cdot[(\nabla\varphi+\nabla\varphi\e{T})n]ds+\mu_2\int_{\om{-}}u\e{2}\cdot[(\nabla\varphi+\nabla\varphi\e{T})n_\varepsilon]ds\\
&+\int_{\om{+}}(\Sigma\e{3}-\Sigma\e{2})n\cdot\varphi ds+\int_{\om{-}}(\Sigma\e{1}-\Sigma\e{2})n_\varepsilon\cdot\varphi ds\equiv W[u,p;\varphi]
\end{align*}
Using \eqref{stokesepsilon} it follows that $\mu(x)\nabla\cdot(\nabla u\e{k}+(\nabla u\e{k})\e{T})-\nabla p\e{k}=0$ in each $\Omega_{k}$ for $k=1,2,3$. Combining this identity with \eqref{operadorepsilon} and taking to account also the continuity of $u$, we then obtain
\begin{align*}
&<\s_\varepsilon(u,p),\varphi>=\int_{\om{+}}(\Sigma\e{3}-\Sigma\e{2})n\cdot\varphi ds+\int_{\om{-}}(\Sigma\e{1}-\Sigma\e{2})n_\varepsilon\cdot\varphi ds
\end{align*}
Therefore, using the boundary conditions \eqref{cond6} and \eqref{cond7}, we arrive at
\begin{equation}
<\s_\varepsilon(u,p),\varphi>=\int_{\om{-}}\kappa_\varepsilon n_\varepsilon\cdot\varphi ds-\int_{\om{+}}\kappa n\cdot\varphi ds
\end{equation}
whence the first equation in \eqref{stokesdist} holds. The second equation in \eqref{stokesdist} follows form the analogous equation in \eqref{stokesepsilon}. To prove the ``only if" statement we just retrace the same computations in the reverse order to obtain
\begin{equation}\label{eqonlyif}
W[u,p;\varphi]+(\mu-\mu_2)\int_{\om{+}}u\cdot[(\nabla\varphi+\nabla\varphi\e{T})n]ds+(\mu-\mu_2)\int_{\om{-}}u\cdot[(\nabla\varphi+\nabla\varphi\e{T})n_\varepsilon]ds=\int_{\om{-}}\kappa_\varepsilon n_\varepsilon\cdot\varphi ds-\int_{\om{+}}\kappa n\cdot\varphi ds.
\end{equation}
for all $\varphi\in C\e{\infty}_c(\bar{D},\R\e{2})$. Since $u\in C(\bar{D};\R\e{2})$ we can rewrite \eqref{eqonlyif} as follows
\begin{align}\nonumber
&\int_{\Omega}[\mu(x)\nabla\cdot(\nabla u+\nabla u\e{T})-\nabla p]\cdot\varphi dV+\int_{\om{+}}[\tau(\Sigma\e{3}-\Sigma\e{2})\cdot n]\tau\cdot\varphi ds+\int_{\om{+}}[n(\Sigma\e{3}-\Sigma\e{2})\cdot n+\kappa]n\cdot\varphi ds\\\label{eqonlyif2}
&+\int_{\om{-}}[\tau_\varepsilon(\Sigma\e{1}-\Sigma\e{2})\cdot n_\varepsilon]\tau_\varepsilon\cdot\varphi ds+\int_{\om{-}}[n_\varepsilon(\Sigma\e{1}-\Sigma\e{2})\cdot n_\varepsilon-\kappa_\varepsilon]n_\varepsilon\cdot\varphi ds=0
\end{align}

Taking $\varphi\in C_c\e{1}(\Omega;\R\e{2})$ all the integral along the boundaries $\om{+}$ and $\om{-}$ vanish, whence first equation in \eqref{stokesepsilon} follows. Therefore, \eqref{eqonlyif2} reduces to

\begin{align}\nonumber
&\int_{\om{+}}[\tau(\Sigma\e{3}-\Sigma\e{2})\cdot n]\tau\cdot\varphi ds+\int_{\om{+}}[n(\Sigma\e{3}-\Sigma\e{2})\cdot n+\kappa]n\cdot\varphi ds+\int_{\om{-}}[\tau_\varepsilon(\Sigma\e{1}-\Sigma\e{2})\cdot n_\varepsilon]\tau_\varepsilon\cdot\varphi ds\\\nonumber
&+\int_{\om{-}}[n_\varepsilon(\Sigma\e{1}-\Sigma\e{2})\cdot n_\varepsilon-\kappa_\varepsilon]n_\varepsilon\cdot\varphi ds=0
\end{align}
Taking now $\varphi\in C\e{\infty}_c(\Omega\cup\om{+};\R\e{2})$, if we use Lemma \ref{lemavarphi} for $\sigma=\om{+}$, $a=0$ , $b=\tau(\Sigma\e{3}-\Sigma\e{2})\cdot n$ and $c=n(\Sigma\e{3}-\Sigma\e{2})\cdot n$ we obtain \eqref{cond6}. On the other hand, if $\varphi\in C\e{\infty}_c(\Omega\cup\om{-})$ and using Lemma \ref{lemavarphi} for $\sigma=\om{-}$, $a=0$ , $b=\tau_\varepsilon(\Sigma\e{3}-\Sigma\e{2})\cdot n_\varepsilon$ and $c=n_\varepsilon(\Sigma\e{3}-\Sigma\e{2})\cdot n_\varepsilon$ we derive condition \eqref{cond7}. Finally, the second equation in \eqref{stokesepsilon} follows from the analogous equation in \eqref{stokesdist}.
\end{proof}
\begin{nota}
Notice that for functions $(u,p)$ under the assumptions of this lemma, the operator $\s_\varepsilon(u,p)$ is only $\nabla\cdot(\mu(x)\nabla u)=\nabla p$ in the sense of distributions. Therefore, \eqref{stokesdist} can be interpreted as 
\begin{displaymath}
\nabla\cdot(\mu(x)\nabla u)-\nabla p=\kappa_\varepsilon n_\varepsilon\delta_{\om{-}}-\kappa n\delta_{\om{+}}, \quad\nabla\cdot u=0\quad\textit{in}\quad \mathcal{D}'(D,\R\e{2})
\end{displaymath}
It is relevant to remark that $\mu(x)\nabla u$ can not be defined in the sense of distributions of any arbitrary functions $u\in C\e{2}(\Omega;\R\e{2})\cap X\e{1}(\Omega;\R\e{2})$. Due to the presence of possible discontinuities of function $u$ at the boundaries $\om{+}$ and $\om{-}$. This is the reason why we have assumed in the Lemma that $u$ is in $C(\bar{D},\R\e{2})$.
\end{nota}
\begin{prop}\label{leadingext} For any $\varepsilon>0$ small let $(u_\varepsilon,p_\varepsilon)\in C(\bar{D})\times L\e{1}(D)$ a family of solutions of \eqref{stokesepsilon}-\eqref{cond7} where the boundaries $\om{+}$ and $\om{-}$ are parametrized by \eqref{gama} and \eqref{gamaepsilon}, respectively. Suppose also that $u_\varepsilon\to u_0\in C(\bar{D})$ when $\varepsilon\to 0$ in the uniform topology and $p_\varepsilon\to p_0$ in $L\e{1}(D)$. Then
\begin{equation}\label{u0dist}
\s_0(u_0,p_0)=-2\kappa n\delta_{\om{+}},\quad \nabla\cdot u_0=0\quad\textit{in}\quad \mathcal{D}'(D;\R\e{2})
\end{equation}
where $\s_0(u_0,p_0)\in\mathcal{D}'(D;\R\e{2})$ is defined by 
\begin{equation}\label{operadors0}
<\s_0(u_0,p_0),\varphi>=\mu\int_{D}u_0\cdot(\Delta\varphi+\nabla\cdot\nabla\varphi\e{t})+p_0\nabla\cdot\varphi dV 
\end{equation}
for all $\varphi\in C\e{\infty}_c(D;\R\e{2})$.
\end{prop}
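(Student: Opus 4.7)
The plan is to apply Lemma \ref{distristokeseps} to $(u_\varepsilon,p_\varepsilon)$, test \eqref{stokesdist} against an arbitrary $\varphi\in C_c^\infty(D;\R^2)$, and pass to the limit $\varepsilon\to 0$ term by term. The statement on $\nabla\cdot u_0=0$ follows directly from $\nabla\cdot u_\varepsilon=0$ in $\mathcal{D}'(D;\R^2)$ together with the uniform convergence $u_\varepsilon\to u_0$: for every test function $\psi\in C_c^\infty(D)$ one has $-\int_D u_\varepsilon\cdot\nabla\psi\,dV\to -\int_D u_0\cdot\nabla\psi\,dV$.

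For the first identity in \eqref{u0dist}, the volume part of $\langle\s_\varepsilon(u_\varepsilon,p_\varepsilon),\varphi\rangle$ is
\begin{equation*}
\int_D\mu(x) u_\varepsilon\cdot(\Delta\varphi+\nabla\cdot\nabla\varphi^T)\,dV+\int_D p_\varepsilon\,\nabla\cdot\varphi\,dV.
\end{equation*}
Since $|\Omega_2(t)|=\mathcal{O}(\varepsilon)$ by \eqref{gamaepsilon} and $\Delta\varphi+\nabla\cdot\nabla\varphi^T$ is bounded, splitting $\mu(x)=\mu+(\mu_2-\mu)\mathbf{1}_{\Omega_2}$ and using $u_\varepsilon\to u_0$ uniformly on $\bar D$, the $\mu_2$-contribution is $\mathcal{O}(\varepsilon)$. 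The remaining $\mu$-piece converges to $\mu\int_D u_0\cdot(\Delta\varphi+\nabla\cdot\nabla\varphi^T)\,dV$, and the pressure term converges by $L^1$-convergence of $p_\varepsilon$. This yields exactly $\langle\s_0(u_0,p_0),\varphi\rangle$ as defined in \eqref{operadors0}.

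The remaining work, and what I expect to be the main point, is the analysis of the two $(\mu-\mu_2)$ boundary integrals in \eqref{operadorepsilon} and of the right-hand side of \eqref{stokesdist}. On $\partial\Omega_2^+$ the integral is independent of $\varepsilon$ up to the replacement $u_\varepsilon\to u_0$. For the integral on $\partial\Omega_2^-$ I will parametrize by $\Gamma_\varepsilon(\xi)=\Gamma(\xi)-\varepsilon h(\xi)n(\xi)$, express $ds_\varepsilon=|\partial_\xi\Gamma_\varepsilon|\,d\xi$ which by \eqref{taueps1} equals $(1+\mathcal{O}(\varepsilon))\,d\xi$, and use Lemma \ref{L1} to obtain $n_\varepsilon=-n+\mathcal{O}(\varepsilon)$. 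Since $\varphi$ is smooth, $\nabla\varphi(\Gamma_\varepsilon(\xi))=\nabla\varphi(\Gamma(\xi))+\mathcal{O}(\varepsilon)$, and by uniform convergence $u_\varepsilon(\Gamma_\varepsilon(\xi))\to u_0(\Gamma(\xi))$. Hence
\begin{equation*}
(\mu-\mu_2)\int_{\om{-}}u_\varepsilon\cdot[(\nabla\varphi+\nabla\varphi^T)n_\varepsilon]\,ds\longrightarrow -(\mu-\mu_2)\int_{\om{+}}u_0\cdot[(\nabla\varphi+\nabla\varphi^T)n]\,ds,
\end{equation*}
which exactly cancels the limit of the $\partial\Omega_2^+$-boundary term. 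Similarly, the right-hand side contribution $\int_{\om{-}}\kappa_\varepsilon n_\varepsilon\cdot\varphi\,ds$ converges, using \eqref{normalepsilon}--\eqref{curvaturaepsilon}, to $-\int_{\om{+}}\kappa n\cdot\varphi\,ds$, which combines with $-\int_{\om{+}}\kappa n\cdot\varphi\,ds$ to give $-2\int_{\om{+}}\kappa n\cdot\varphi\,ds=\langle -2\kappa n\delta_{\om{+}},\varphi\rangle$.

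Putting these pieces together inside $\langle\s_\varepsilon(u_\varepsilon,p_\varepsilon),\varphi\rangle=\langle\kappa_\varepsilon n_\varepsilon\delta_{\om{-}}-\kappa n\delta_{\om{+}},\varphi\rangle$ and letting $\varepsilon\to 0$ gives \eqref{u0dist}. The only delicate step is the bookkeeping in the limit on $\partial\Omega_2^-$: the sign flip of the outer normal coming from the opposite orientation (reflected in \eqref{normalepsilon}) is what produces both the cancellation of the $(\mu-\mu_2)$ interface contributions and the doubling factor $-2$ on the right-hand side.
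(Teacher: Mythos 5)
Your proposal is correct and follows essentially the same route as the paper: invoke Lemma \ref{distristokeseps} for $(u_\varepsilon,p_\varepsilon)$, pass to the limit in the volume term using the $\mathcal{O}(\varepsilon)$ measure of $\Omega_2$ together with the uniform and $L\e{1}$ convergences, and use the expansions of Lemma \ref{L1} on $\om{-}$ so that the flip $n_\varepsilon\to -n$ both cancels the two $(\mu-\mu_2)$ interface integrals and produces the factor $-2\kappa n\delta_{\om{+}}$. No further comments are needed.
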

\begin{proof}
Using $\om{-}$ is parametrized by \eqref{gamaepsilon} as well as \eqref{derivnormal}, we obtain that the arc length of $\om{-}$ can be written as
\begin{equation}\label{arcleneps}
\abs{d\Gamma_\varepsilon}=\sqrt{(1+\varepsilon h(\xi)\kappa(\xi))\e{2}+\varepsilon\e{2}(\dx{h}(\xi))\e{2}}
\end{equation}
where we also  use that $\xi$ is the arc length of the curve $\om{+}$. 
Therefore,
\begin{align}\label{operadorintlinea}
<\s_\varepsilon(u_\varepsilon,p_\varepsilon),\varphi>&=\int_{\om{-}}\kappa_\varepsilon n_\varepsilon\cdot\varphi ds -\int_{\om{+}}\kappa n\cdot\varphi ds=\\\nonumber
&= \int_0\e{L}\big(\kappa_\varepsilon(\xi)n_\varepsilon(\xi)\cdot\varphi(\Gamma_\varepsilon(\xi))\sqrt{(1+\varepsilon h(\xi)\kappa(\xi))\e{2}+\varepsilon\e{2}(\dx{h}(\xi))\e{2}}-\kappa(\xi)n(\xi)\cdot\varphi(\Gamma(\xi))\big)d\xi
\end{align}

Taking the limit $\varepsilon\to 0$ in \eqref{gamaepsilon}, \eqref{normalepsilon} and \eqref{curvaturaepsilon} we obtain:
\begin{equation}\label{limgamnkapa}
\Gamma_\varepsilon(\xi)\to\Gamma(\xi),\quad n_\varepsilon(\xi)\to -n(\xi),\quad\kappa_\varepsilon(\xi)\to\kappa(\xi)
\end{equation}
then, the right hand side of \eqref{operadorintlinea} converges to
\begin{equation}\label{limit1}
 -2\int_{\om{+}}\kappa n\cdot\varphi ds\quad\textit{as}\quad\varepsilon\to 0.
\end{equation}
On the other hand, using \eqref{arcleneps}, \eqref{limgamnkapa} and the hypotheses $u_\varepsilon\to u_0$ and $p_\varepsilon\to p_0$ when $\varepsilon\to 0$ it follows that the sum of last two terms in the right hand side of \eqref{operadorepsilon} tends to zero as $\varepsilon\to 0$. Finally, using again the convergence $u_\varepsilon\to u_0$ and $p_\varepsilon\to p_0$ when $\varepsilon\to 0$ as well as the fact that $\om{-}$ is parametrized as in \eqref{gamaepsilon} we arrive at
\begin{equation}\label{limit2}
\int_{D}[\mu(x)u_\varepsilon\cdot(\Delta\varphi+\nabla\cdot\nabla\varphi\e{T})+p_\varepsilon\nabla\cdot\varphi]dV\to\mu\int_{D}[ u_0\cdot(\Delta\varphi+\nabla\cdot\nabla\varphi\e{T})+p_0\nabla\cdot\varphi]dV
\end{equation}

Therefore, $<\s_\varepsilon(u_\varepsilon,p_\varepsilon)>$ converges to the right hand side of \eqref{limit2}. Thus, taking the limit  as $\varepsilon\to 0$ in \eqref{operadorintlinea} we obtain the first equation in \eqref{u0dist} (c.f. \eqref{limit1}). The second equation in \eqref{u0dist} follows taking the limit of the equation $\nabla\cdot u_\varepsilon=0$ as $\varepsilon\to 0$ in the sense of distributions.
\end{proof}

\begin{nota}
The meaning of  Proposition \ref{leadingext} is that, to leading order, we can approximate the velocity field $u_\varepsilon$ defined by means \eqref{stokesepsilon}-\eqref{cond7}, neglecting the role of the fluid $2$ and assuming that there is a membrane separating two portions of fluid $1$ with a surface tension is the double of the surface tension between fluid $1$ and $2$.
\end{nota}

We now reformulate \eqref{u0dist} as a problem for Stokes equations satisfying suitable jump conditions across the interface $\om{+}$. Notice that the domains $\Omega_1$ and $\Omega_2$ depend on $\varepsilon$. In the limit $\varepsilon\to 0$, the domain $\Omega_2$ converges to a curve and the domain $\Omega_1$ expands to a domain $D_1=D\setminus\overline{\Omega_3}$. Therefore, $D=D_1\cup\om{+}\cup\Omega_3$ and the sets $D_1$, $\om{+}$ and $\Omega_3$ are mutually disjoints. We will denote as $\Omega_{lim}$ the open set $D_1\cup\Omega_3$.

\begin{lem}\label{bcu0}
Let be $(u_0,p_0)\in [C\e{2}(\Omega_{lim})\cap X\e{1}(\Omega_{lim})]\times [C\e{1}(\Omega_{lim})\cap X\e{0}(\Omega_{lim})]$ with $u_0=U$ in $\partial D$. Then \eqref{u0dist} holds if and only if $(u_0,p_0)$ solves 
\begin{equation}\label{stokesu0}
\mu\Delta u_0=\nabla p_0,\quad \nabla\cdot u_0=0\quad\textit{in}\quad D_1\cup\Omega_3
\end{equation} with the following boundary conditions:
\begin{equation}\label{condicionesu0}
\begin{cases}
u_0\e{3}=u_0\e{1}\quad\textit{in}\quad\om{+}\\
(\Sigma\e{3}_0-\Sigma\e{1}_0)n=-2\kappa n\quad\textit{in}\quad\om{+}
\end{cases}
\end{equation} 
where $(u_0\e{1},p_0\e{1})=(u_0,p_0)|_{D_1}$ and $(u_0\e{3},p_0\e{3})=(u_0,p_0)|_{\Omega_3}$ and we write $\Sigma\e{k}_0=\mu(\nabla u_0\e{k}+(\nabla u_0\e{k})\e{T})-p_0\e{k}I$ for $k=1,3$.
\end{lem}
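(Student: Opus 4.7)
The proof follows exactly the same scheme used in Lemma \ref{distristokeseps}, but adapted to the simpler geometry where the limit problem has only one interface $\om{+}$ separating $D_1$ from $\Omega_3$. The plan is to integrate by parts in the volume integral defining $\s_0$ twice on each subdomain, and then match the resulting boundary terms against the right-hand side of \eqref{u0dist}.

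First, I would establish the ``if'' direction. Since $D = D_1\cup\om{+}\cup\Omega_3$ and $\om{+}$ has measure zero, I split the integral in \eqref{operadors0} as $\int_{D_1} + \int_{\Omega_3}$. On each piece $\Omega_k$ ($k=1,3$) I integrate by parts twice exactly as in the proof of Lemma \ref{distristokeseps}: once to transfer one derivative, producing a boundary integral of $\mu u_0^k\cdot(\nabla\varphi+\nabla\varphi^T)n_k$; then again to obtain the volume term $\int_{\Omega_k}[\mu\nabla\cdot(\nabla u_0^k+(\nabla u_0^k)^T)-\nabla p_0^k]\cdot\varphi\,dV$ together with the boundary term $-\int_{\partial\Omega_k}\Sigma_0^k n_k\cdot\varphi\,ds$. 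Using $\nabla\cdot u_0=0$ the volume term becomes $\int_{\Omega_k}(\mu\Delta u_0-\nabla p_0)\cdot\varphi\,dV$, which vanishes by \eqref{stokesu0}. Since $\varphi$ has compact support in $D$, the only boundary contributions come from $\om{+}$; noting that the outer normal to $D_1$ at $\om{+}$ is $n$ while the outer normal to $\Omega_3$ at $\om{+}$ is $-n$, the boundary terms reduce to
\begin{align*}
<\s_0(u_0,p_0),\varphi>=\mu\int_{\om{+}}(u_0^1-u_0^3)\cdot(\nabla\varphi+\nabla\varphi^T)n\,ds+\int_{\om{+}}(\Sigma_0^3-\Sigma_0^1)n\cdot\varphi\,ds.
\end{align*}
The first integral vanishes by continuity of $u_0$ across $\om{+}$, and the second equals $-2\int_{\om{+}}\kappa n\cdot\varphi\,ds$ by the stress jump condition, yielding the first identity in \eqref{u0dist}. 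The incompressibility in the distributional sense follows from the pointwise equation by a standard computation.

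For the ``only if'' direction, I would first restrict to test functions $\varphi\in C_c^{\infty}(\Omega_{lim};\R^2)$ supported in either $D_1$ or $\Omega_3$. Then all boundary integrals vanish, $\s_0$ is supported on $\om{+}$ so the right-hand side of \eqref{u0dist} vanishes, and the identity above forces $\int_{\Omega_k}(\mu\Delta u_0-\nabla p_0)\cdot\varphi\,dV=0$ for arbitrary such $\varphi$; this gives \eqref{stokesu0} in $D_1\cup\Omega_3$. With the Stokes equations established, the volume contributions disappear for every $\varphi\in C_c^{\infty}(D;\R^2)$, and \eqref{u0dist} reduces to
\begin{align*}
\mu\int_{\om{+}}(u_0^1-u_0^3)\cdot(\nabla\varphi+\nabla\varphi^T)n\,ds+\int_{\om{+}}[(\Sigma_0^3-\Sigma_0^1)n+2\kappa n]\cdot\varphi\,ds=0.
\end{align*}

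The final step is to decode this identity into the two conditions in \eqref{condicionesu0} using Lemma \ref{lemavarphi}. Setting $M=\mu(u_0^1-u_0^3)\otimes n$, a direct calculation shows $\mu(u_0^1-u_0^3)\cdot(\nabla\varphi+\nabla\varphi^T)n=M:(\nabla\varphi+\nabla\varphi^T)$, so with $a=[(\Sigma_0^3-\Sigma_0^1)n+2\kappa n]\cdot\tau$ and $b=[(\Sigma_0^3-\Sigma_0^1)n+2\kappa n]\cdot n$ the hypothesis \eqref{intabc1} of Lemma \ref{lemavarphi} is satisfied on $\sigma=\om{+}$. Computing $M+M^T=\mu[(u_0^1-u_0^3)\otimes n+n\otimes(u_0^1-u_0^3)]$, the first two conclusions of the Lemma read $\mu(u_0^1-u_0^3)\cdot\tau=0$ and $2\mu(u_0^1-u_0^3)\cdot n=0$, which together give $u_0^1=u_0^3$ on $\om{+}$; once this equality holds, $M+M^T$ vanishes on $\om{+}$, so the remaining two conclusions reduce to $a=0$ and $b=0$, which together are equivalent to $(\Sigma_0^3-\Sigma_0^1)n=-2\kappa n$. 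The main ``technical'' point is simply to keep track of the orientations (the normal $n$ is outer to $\Omega_2$, hence inner to $\Omega_3$) and to verify the algebraic identity relating $M:(\nabla\varphi+\nabla\varphi^T)$ to $v\cdot(\nabla\varphi+\nabla\varphi^T)n$; once Lemma \ref{lemavarphi} is available, the rest is a rewriting of the computation in Lemma \ref{distristokeseps}.
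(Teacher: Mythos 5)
Your proposal is correct and takes essentially the same approach as the paper: integrate by parts on $D_1$ and $\Omega_3$ to reduce $\s_0(u_0,p_0)$ to interface terms on $\om{+}$, then apply Lemma \ref{lemavarphi} with $M=\mu(u_0\e{1}-u_0\e{3})\otimes n$ and the same choices of $a$ and $b$, using $\tau\cdot(M+M\e{T})\tau=0$ to read off \eqref{condicionesu0}. The only differences are cosmetic: you prove the two implications in the opposite order and spell out the localization step (testing with $\varphi$ supported in $D_1$ or $\Omega_3$ to recover \eqref{stokesu0}), which the paper leaves implicit in its ``retrace the steps'' remark.
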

\begin{proof}
First, suppose that \eqref{u0dist} holds. Using the regularity properties of $(u_0,p_0)$ and integrating by parts in \eqref{operadors0}, we arrive at
\begin{align}\nonumber
&<\s_0(u_0,p_0),\varphi>=-\mu\int_{\om{+}}(u_0\e{3}-u_0\e{1})\cdot(\nabla\varphi+\nabla\varphi\e{T})nds+\mu\int_{\om{+}}[(\partial_x u_{0y}\e{3}+\partial_y u_{0x}\e{3})-(\partial_x u_{0y}\e{1}+\partial_y u_{0x}\e{1})]\tau\cdot\varphi ds\\\label{s0cond}
&+\int_{\om{+}}[2\mu(\partial_y u_{0y}\e{3}-\partial_y u_{0y}\e{1})-(p_0\e{3}-p_0\e{1})]n\cdot\varphi ds=-2\int_{\om{+}}\kappa n\cdot\varphi ds
\end{align}
for all $\varphi\in C\e{\infty}_c(D;\R\e{2})$. Thus, using the definition of $\Sigma_0\e{k}$ we obtain
\begin{align*}
&0=-\mu\int_{\om{+}}(u_0\e{3}-u_0\e{1})\cdot(\nabla\varphi+\nabla\varphi\e{T})nds+\int_{\om{+}}[\tau(\Sigma_0\e{3}-\Sigma_0\e{1})\cdot n]\tau\cdot\varphi ds+\int_{\om{+}}[n(\Sigma_0\e{3}-\Sigma_0\e{1})\cdot n+2\kappa]n\cdot\varphi ds
\end{align*}

Using then Lemma \ref{lemavarphi} for $\sigma=\om{+}$, $M=-\mu(u_0\e{3}-u_0\e{1})\otimes n$, $a=\tau(\Sigma_0\e{3}-\Sigma_0\e{1})\cdot n$ and $b=n(\Sigma_0\e{3}-\Sigma_0\e{1})\cdot n+2\kappa$, we derive \eqref{condicionesu0} taking into account that $\tau\cdot(M+M\e{T})\tau=0$. Finally, we can prove the reverse statement retracing the steps in the reverse order. 
\end{proof}
\begin{nota}\label{notau0}
 Notice that the second equation in \eqref{stokesu0} as well as \eqref{condicionesu0} imply that $\nabla u_0$ is continuous in $\om{+}$. Indeed differentiating the first equation in \eqref{condicionesu0} along the curve $\om{+}$ we obtain $(\nabla u_0\e{3})\tau=(\nabla u_0\e{1})\tau$, then $n\cdot(\nabla u_0\e{3})\tau=n\cdot(\nabla u_0\e{1})\tau$ and $\tau\cdot(\nabla u_0\e{3})\tau=\tau\cdot(\nabla u_0\e{1})\tau$. On the other hand, the incompressibility condition in \eqref{stokesu0} implies that $n(\nabla u_0\e{3})\cdot n=n(\nabla u_0\e{1})\cdot n$. Multiplying the second equation in \eqref{condicionesu0} by $\tau$ we obtain that $\tau\cdot\nabla u_0\e{3}n+n\cdot \nabla u_0\e{3}\tau=\tau\cdot\nabla u_0\e{1}n+n\cdot \nabla u_0\e{1}\tau$. Therefore, it follows that $\nabla u_0\e{3}=\nabla u_0\e{1}$. Finally, multiplying the second equation in \eqref{condicionesu0} by $n$ we obtain $p_0\e{3}-p_0\e{1}=2\kappa$ in $\om{+}$.
\end{nota}
\section{Derivation of a thin film approximation using matched asymptotics}\label{S3}

Proposition \ref{leadingext} suggests that to the leading order the normal velocity at the interfaces $\om{+}$ and $\om{-}$ is given by $u_0\cdot n$ and $-u_0\cdot n$ respectively. (c.f. \eqref{normalepsilon}). Given that the distance between these interfaces is of order $\varepsilon$, we need to approximate the velocities at the interfaces up to the next order in order to estimate the range of times for which this distance remains of order $\varepsilon$.
In this section we compute this approximation of the velocities using formal matched asymptotic expansions.

In order to compute the inner expansion of the velocity field we need to reformulate \eqref{stokesepsilon}-\eqref{cond7} using a rescaled set of variables in a neighbourhood of any point $x_0\in\om{+}$. Suppose that $x_0=\Gamma(\xi_0)$ where $\Gamma$ is the arc length parametrization in \eqref{gama}. We then define:
\begin{equation}\label{cambiovariable}
y=\frac{x-x_0}{\varepsilon};\quad v_\varepsilon(y)=u_\varepsilon(x);\quad P_\varepsilon(y)=\varepsilon p_\varepsilon(x)\quad\textit{and}\quad \tilde{\mu}(y)=\mu(x).
\end{equation}
as well as the domains $\tilde{\Omega}_k=\frac{\Omega_k-x_0}{\varepsilon}$ for $k=1,2,3$.

Then using \eqref{stokesepsilon} we obtain that $(v_\varepsilon,P_\varepsilon)$ solves 

\begin{equation}\label{eqintepsilon}
\tilde{\mu}(y)\Delta_y v_\varepsilon-\nabla_y P_\varepsilon=0,\quad\nabla_y\cdot v_\varepsilon=0\quad\textit{in}\quad\tilde{\Omega}_k\quad\textit{for}\quad k=1,2,3
\end{equation}
Using the parametrizations of the curves $\om{+}$ and $\om{-}$ given in \eqref{gama} and \eqref{gamaepsilon}, it follows that the curves $\tilde{\om{+}}$ and $\tilde{\om{-}}$ can be parametrized as  $\{y=\tilde{\Gamma}(\tilde{\xi})\}$ and $\{y=\tilde{\Gamma}_\varepsilon(\tilde{\xi})\}$ respectively, where
\begin{align}\label{gamatilde1}
&\tilde{\Gamma}(\tilde{\xi})=\frac{\Gamma(\xi_0+\varepsilon\tilde{\xi})-x_0}{\varepsilon}\\\label{gamaepstilde1}
&\tilde{\Gamma}_\varepsilon(\tilde{\xi})=\frac{\Gamma(\xi_0+\varepsilon\tilde{\xi})-x_0}{\varepsilon}-h(\xi_0+\varepsilon\tilde{\xi})n(\xi_0+\varepsilon\tilde{\xi})
\end{align}
for $\tilde{\xi}=\frac{\xi-\xi_0}{\varepsilon}$. Notice that $\tilde{\Gamma}(\tilde{\xi})$ provides an arc length parametrization of the curve $\tilde{\om{+}}$.

We will denote as $\tilde{\tau}(\tilde{\xi})=\dx{\tilde{\Gamma}}$ and $\tilde{\tau}_\varepsilon(\tilde{\xi})=\frac{\dx{\tilde{\Gamma}}_\varepsilon}{\abs{\dx{\tilde{\Gamma}}_\varepsilon}}$ the tangent vectors of $\tilde{\om{+}}$, $\tilde{\om{-}}$ at the points $\tilde{\Gamma}(\tilde{\xi})$ and $\tilde{\Gamma}_\varepsilon(\tilde{\xi})$ respectively. Similarly, we denote as $\tilde{n}(\tilde{\xi})=\mathbb{A}\tilde{\tau}(\tilde{\xi})$, $\tilde{n}_\varepsilon(\tilde{\xi})=-\mathbb{A}\tilde{\tau}_\varepsilon(\tilde{\xi})$ the corresponding outer normal vectors where $\mathbb{A}$ is as in \eqref{matrizA}. 
The boundary conditions \eqref{cond2}-\eqref{cond7} imply:
\begin{align}
&v_\varepsilon\quad\textit{ is continuous in}\quad\tilde{\om{+}}.\\
&v_\varepsilon\quad\textit{ is continuous in}\quad\tilde{\om{-}}.\\
&(\tilde{\Sigma}\e{3}-\tilde{\Sigma}\e{2})\tilde{n}(\tilde{\xi})=-\tilde{\kappa}(\tilde{\xi})\tilde{n}(\tilde{\xi})\quad\textit{ in}\quad\tilde{\om{+}}.\\
&(\tilde{\Sigma}\e{1}-\tilde{\Sigma}\e{2})\tilde{n}_\varepsilon(\tilde{\xi})=\tilde{\kappa}_\varepsilon(\tilde{\xi})\tilde{n}_\varepsilon(\tilde{\xi})\quad\textit{ in}\quad\tilde{\om{-}}.
\end{align}
where $\tilde{\kappa}(\tilde{\xi})$ and $\tilde{\kappa}_\varepsilon(\tilde{\xi})$ are the curvatures at the points $\tilde{\Gamma}(\tilde{\xi})$ and $\tilde{\Gamma}_\varepsilon(\tilde{\xi})$.  The stress tensor is given by $\tilde{\Sigma}\e{i}$ in the new variables in each domain:
\begin{equation}\label{sigmastilde}
\begin{cases}
\tilde{\Sigma}\e{1}=-P_\varepsilon\e{1}I+\mu(\nabla_y v_\varepsilon\e{1}+(\nabla_y v_\varepsilon\e{1})\e{T})\\
\tilde{\Sigma}\e{2}=-P_\varepsilon\e{2}I+\mu_2(\nabla_y v_\varepsilon\e{2}+(\nabla_y v_\varepsilon\e{2})\e{T})\\
\tilde{\Sigma}\e{3}=-P_\varepsilon\e{3}I+\mu(\nabla_y v_\varepsilon\e{3}+(\nabla_y v_\varepsilon\e{3})\e{T})
\end{cases}
\end{equation}

The curves $\tilde{\om{+}}$ and $\tilde{\om{-}}$ can be approximated for $\abs{y}$ of order $1$ and $\varepsilon\to 0$, by means of some parabolas with very large radius of curvature. The main geometrical properties of $\tilde{\om{+}}$ and $\tilde{\om{-}}$ are collected in the following lemma: 
\begin{lem}
Suppose that $\Gamma\in C\e{3}(\R;D)$ and $h\in C\e{3}(\R)$. Then the following approximations hold uniformly on bounded sets of $\tilde{\xi}$.
\begin{align}\label{gammatilde}
&\tilde{\Gamma}(\tilde{\xi})=\tau(\xi_0)\tilde{\xi}+\frac{\varepsilon}{2}\kappa(\xi_0)n(\xi_0)\tilde{\xi}\e{2}+\mathcal{O}(\varepsilon\e{2})\\\label{gammaepsilontilde}
&\tilde{\Gamma}_\varepsilon(\tilde{\xi})=-h(\xi_0)n(\xi_0)+\tau(\xi_0)\tilde{\xi}+\varepsilon\Big(\frac{1}{2}\kappa(\xi_0)n(\xi_0)\tilde{\xi}\e{2}+(h(\xi_0)\kappa(\xi_0)\tau(\xi_0)-\dx{h}(\xi_0)n(\xi_0))\tilde{\xi}\Big)+\mathcal{O}(\varepsilon\e{2})\\\label{tautilde}
&\tilde{\tau}(\tilde{\xi})=\tau(\xi_0)+\varepsilon\kappa(\xi_0)n(\xi_0)\tilde{\xi}+\mathcal{O}(\varepsilon\e{2})\\\label{ntilde}
&\tilde{n}(\tilde{\xi})=n(\xi_0)-\varepsilon\kappa(\xi_0)\tau(\xi_0)\tilde{\xi}+\mathcal{O}(\varepsilon\e{2})\\\label{tautildeeps}
&\tilde{\tau}_\varepsilon(\tilde{\xi})=\tau(\xi_0)+\varepsilon(\kappa(\xi_0)n(\xi_0)\tilde{\xi}-\dx{h}(\xi_0)n(\xi_0))+\mathcal{O}(\varepsilon\e{2})\\\label{nepsilontildeeps}
&\tilde{n}_\varepsilon(\tilde{\xi})=-n(\xi_0)+\varepsilon(\kappa(\xi_0)\tau(\xi_0)\tilde{\xi}-\dx{h}(\xi_0)\tau(\xi_0))+\mathcal{O}(\varepsilon\e{2})\\\label{kapatilde}
&\tilde{\kappa}(\tilde{\xi})=\varepsilon\kappa(\xi_0)+\mathcal{O}(\varepsilon\e{2})\\\label{kapaepsilontilde}
&\tilde{\kappa}_\varepsilon(\tilde{\xi})=\varepsilon\kappa(\xi_0)+\mathcal{O}(\varepsilon\e{2})
\end{align}

\end{lem}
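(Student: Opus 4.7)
The plan is to establish all eight expansions by Taylor expansion of the underlying functions $\Gamma$, $h$, $\tau$, $n$ at the point $\xi_0$, using the Frenet-type identities \eqref{derivnormal} to rewrite derivatives. Since the expansions are required only on bounded sets of $\tilde{\xi}$, the remainders $\mathcal{O}(\varepsilon^2)$ depending on $\norm{\Gamma}_{C^3}$ and $\norm{h}_{C^3}$ come out uniformly.

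First, for \eqref{gammatilde}, I would Taylor expand $\Gamma(\xi_0+\varepsilon\tilde{\xi})$ to second order around $\xi_0$. Using $\partial_\xi\Gamma = \tau$ and $\partial_\xi\tau = \kappa n$ from \eqref{derivnormal}, one gets
\[
\Gamma(\xi_0+\varepsilon\tilde{\xi}) = \Gamma(\xi_0) + \varepsilon\tilde{\xi}\,\tau(\xi_0) + \tfrac{\varepsilon^2\tilde{\xi}^2}{2}\kappa(\xi_0)n(\xi_0) + \mathcal{O}(\varepsilon^3),
\]
and dividing by $\varepsilon$ (using $x_0 = \Gamma(\xi_0)$) yields \eqref{gammatilde}. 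For \eqref{gammaepsilontilde}, I would plug the same expansion into \eqref{gamaepstilde1} together with Taylor expansions of $h$ (trivial) and of $n$, using $\partial_\xi n = -\kappa\tau$, getting $n(\xi_0+\varepsilon\tilde{\xi}) = n(\xi_0) - \varepsilon\tilde{\xi}\kappa(\xi_0)\tau(\xi_0) + \mathcal{O}(\varepsilon^2)$; grouping orders in $\varepsilon$ produces \eqref{gammaepsilontilde}.

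Next, \eqref{tautilde} follows by differentiating \eqref{gammatilde} with respect to $\tilde{\xi}$, and \eqref{ntilde} follows immediately from $\tilde{n} = \mathbb{A}\tilde{\tau}$ and $n(\xi_0)=\mathbb{A}\tau(\xi_0)$. For \eqref{tautildeeps}, I would differentiate \eqref{gammaepsilontilde} to obtain
\[
\partial_{\tilde{\xi}}\tilde{\Gamma}_\varepsilon = \tau(\xi_0) + \varepsilon\bigl(\kappa(\xi_0)n(\xi_0)\tilde{\xi} + h(\xi_0)\kappa(\xi_0)\tau(\xi_0) - \dx{h}(\xi_0)n(\xi_0)\bigr) + \mathcal{O}(\varepsilon^2),
\]
then compute $\abs{\partial_{\tilde{\xi}}\tilde{\Gamma}_\varepsilon} = 1 + \varepsilon h(\xi_0)\kappa(\xi_0) + \mathcal{O}(\varepsilon^2)$ since the coefficients of $n(\xi_0)$ contribute only at order $\varepsilon^2$ inside the square. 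Normalizing eliminates the $\tau(\xi_0)$ correction and produces \eqref{tautildeeps}; applying $-\mathbb{A}$ gives \eqref{nepsilontildeeps}.

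The main observation needed for \eqref{kapatilde} and \eqref{kapaepsilontilde} is that curvature has units of inverse length, so the rescaling $y=(x-x_0)/\varepsilon$ in \eqref{cambiovariable} forces
\[
\tilde{\kappa}(\tilde{\xi}) = \varepsilon\,\kappa(\xi_0+\varepsilon\tilde{\xi}), \qquad \tilde{\kappa}_\varepsilon(\tilde{\xi}) = \varepsilon\,\kappa_\varepsilon(\xi_0+\varepsilon\tilde{\xi}).
\]
This is where I expect the main bookkeeping obstacle: one should verify it from the Frenet identity for the rescaled curve $\tilde{\Gamma}$, using \eqref{tautilde}--\eqref{ntilde} to show $\partial_{\tilde{\xi}}\tilde{\tau} = \tilde{\kappa}\,\tilde{n}$ and matching with the already computed $\partial_{\tilde{\xi}}\tilde{\tau}=\varepsilon\kappa(\xi_0)n(\xi_0)+\mathcal{O}(\varepsilon^2)$. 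Once this identity is established, Taylor expansion of $\kappa$ yields \eqref{kapatilde}, and combining with \eqref{curvaturaepsilon} from Lemma \ref{L1} yields \eqref{kapaepsilontilde}. All remainders depend only on $\norm{\Gamma}_{C^3}$, $\norm{h}_{C^3}$, and the bound on $\tilde{\xi}$, so the convergence is uniform on bounded sets as required.
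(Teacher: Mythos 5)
Your proposal is correct and follows essentially the same route as the paper: Taylor expansion at $\xi_0$ combined with the Frenet identities \eqref{derivnormal}, \eqref{derivnormaleps} (and, for the $\varepsilon$-quantities, the normalization underlying \eqref{taueps1}), with the curvature scaling $\tilde{\kappa}(\tilde{\xi})=\varepsilon\kappa(\xi_0+\varepsilon\tilde{\xi})$ obtained from the Frenet relation of the rescaled arc-length parametrization and \eqref{kapaepsilontilde} from \eqref{curvaturaepsilon}. The only point to phrase carefully is that ``differentiating \eqref{gammatilde}'' should be read as differentiating the exact rescaled parametrization \eqref{gamatilde1}, which gives $\tilde{\tau}(\tilde{\xi})=\tau(\xi_0+\varepsilon\tilde{\xi})$ exactly before Taylor expanding, rather than differentiating the asymptotic formula itself.
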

\begin{proof}
The approximations \eqref{gammatilde}, \eqref{tautilde}, \eqref{ntilde} and \eqref{kapatilde} follow from Taylor approximation at $\xi=\xi_0$ and \eqref{derivnormal}. On the other hand \eqref{gammaepsilontilde}, \eqref{tautildeeps},\eqref{nepsilontildeeps} and \eqref{kapaepsilontilde} follow from \eqref{derivnormaleps}, \eqref{taueps1} and  Taylor series.
\end{proof}

We now approximate $v_\varepsilon$ and $P_\varepsilon$ by means of the following power of series in $\varepsilon$ 
\begin{equation}\label{expansionvp}
v_\varepsilon=v_0+\varepsilon v_1+\cdots\quad\textit{and}\quad P_\varepsilon=P_0+\varepsilon P_1+\cdots
\end{equation}

Due to \eqref{gammatilde} and \eqref{gammaepsilontilde} the boundaries $\tilde{\om{+}}$ and $\tilde{\om{-}}$ converge as $\varepsilon\to 0$ to the straight lines
\begin{align}\label{c+}
&c\e{+}=\{y=\tau(\xi_0)\tilde{\xi}\}\\\label{c-}
&c\e{-}=\{y=-h(\xi_0)n(\xi_0)+\tau(\xi_0)\tilde{\xi}\}
\end{align} 

Therefore the functions $(v_0,P_0)\in X\e{1}(\tilde{D}_0;\R\e{2})\times X\e{0}(\tilde{D}_0;\R)$ satisfy
\begin{equation}\label{sistmv0}
\tilde{\mu}_0(y)\Delta_y v_0-\nabla_y P_0=0\quad,\nabla_y\cdot v_0=0\quad\textit{in}\quad \tilde{D}_0=\R\e{2}\setminus [c\e{+}\cup c\e{-}]
\end{equation}
where 
\begin{displaymath}
\tilde{\mu}_0(y)=\begin{cases}
\mu,\quad y\in\R\e{2}\setminus\overline{\tilde{\Omega}_0}\\
\mu_2,\quad y\in\tilde{\Omega}_0
\end{cases}
\end{displaymath}
and $\tilde{\Omega}_0=\{y=-rn(\xi_0)+\tilde{\xi}\tau(\xi_0),r\in(0,h(\xi_0)),\tilde{\xi}\in\R\}$. O pongo: $\tilde{\Omega}_0\e{2}=\{y=-rn(\xi_0)+\tilde{\xi}\tau(\xi_0),r\in(0,h(\xi_0)),\tilde{\xi}\in\R\}$, $\tilde{\Omega}_0\e{3}=\{y=-rn(\xi_0)+\tilde{\xi}\tau(\xi_0),r\in(-\infty,0),\tilde{\xi}\in\R\}$ and $\tilde{\Omega}_0\e{1}=\{y=-rn(\xi_0)+\tilde{\xi}\tau(\xi_0),r\in(h(\xi_0),\infty),\tilde{\xi}\in\R\}$.

Moreover, the functions $(v_0,P_0)$ satisfy the following boundary conditions:
 \begin{align}
&v_0\quad\textit{ is continuous in}\quad c\e{+}\cup c\e{-}.\\
&\big(\mu(\nabla_y v_0\e{3}+(\nabla_y v_0\e{3})\e{T})-\mu_2(\nabla_y v_0\e{2}+(\nabla_y v_0\e{2})\e{T})-(P_0\e{3}-P_0\e{2})I\big)n(\xi_0)=0\quad\textit{ in}\quad c\e{+}.\\
&\big(\mu(\nabla_y v_0\e{1}+(\nabla_y v_0\e{1})\e{T})-\mu_2(\nabla_y v_0\e{2}+(\nabla_y v_0\e{2})\e{T})-(P_0\e{1}-P_0\e{2})I\big)n(\xi_0)=0\quad\textit{ in}\quad c\e{-}.
\end{align}

We assume also that there exist an outer expansion 
\begin{equation}\label{expensionup}
(u_\varepsilon, p_\varepsilon)=(u_0,p_0)+\varepsilon (u_1,p_1)+\cdots\quad\textit{as}\quad\varepsilon\to 0.
\end{equation}  Therefore, Proposition \ref{leadingext} shows that $(u_0,p_0)$ solves \eqref{u0dist}.
 
Then, using \eqref{cambiovariable} it follows that $(v_0,P_0)$ must satisfy the following matching conditions 

\begin{equation}\label{limitesv0p0}
v_0\to u_0(x_0)\quad\textit{and}\quad P_0\to 0\quad\textit{as}\quad dist(y,c\e{+}\cup c\e{-})\to\infty.
\end{equation}

The unique solution of \eqref{sistmv0}-\eqref{limitesv0p0} is 
\begin{equation}\label{solucionv0P0}
v_0(y)=u_0(x_0),\quad P_0(y)=0\quad\textit{for}\quad k=1,2,3.
\end{equation}

Using \eqref{eqintepsilon}, \eqref{gammatilde},\eqref{gamaepsilon}, as well as the fact that $\nabla_y v_0=0$ and $P_0=0$ (c.f. \eqref{limitesv0p0})it follows that $(v_1,P_1)\in X\e{1}(\tilde{D}_0;\R\e{2})\times X\e{0}(\tilde{D}_0;\R)$ solves 

\begin{equation}\label{sistmv1}
\tilde{\mu}_0(y)\Delta_y v_1-\nabla_y P_1=0,\quad\nabla_y\cdot v_1=0\quad\textit{in}\quad\tilde{D}_0
\end{equation}
with the following boundary conditions:
 \begin{align}\label{condO11}
 &v_1\quad\textit{ is continuous in}\quad c\e{+}\cup c\e{-}.\\\label{condO13}
&\big(\mu(\nabla_y v_1\e{3}+(\nabla_y v_1\e{3})\e{T})-\mu_2(\nabla_y v_1\e{2}+(\nabla_y v_1\e{2})\e{T})-(P_1\e{3}-P_1\e{2})I\big)n(\xi_0)=-\kappa(\xi_0)n(\xi_0)\quad\textit{ in}\quad c\e{+}.\\\label{condO14}
&\big(\mu(\nabla_y v_1\e{1}+(\nabla_y v_1\e{1})\e{T})-\mu_2(\nabla_y v_1\e{2}+(\nabla_y v_1\e{2})\e{T})-(P_1\e{1}-P_1\e{2})I\big)n(\xi_0)=\kappa(\xi_0)n(\xi_0)\quad\textit{ in }\quad c\e{-}.
\end{align}

\begin{lem}\label{matchingv1}
Let us denote that $(v_1\e{k},P_1\e{k})=(v_1|_{\tilde{\Omega}_0\e{k}},P_1|_{\tilde{\Omega}_0\e{k}})$ for $k=1,3$. There exists a piecewise affine solution of \eqref{sistmv1}-\eqref{condO14} holds, satisfying the matching conditions:
\begin{align}\label{v1ext}
&v_1\e{k}\sim M_k(x_0)y\\\label{P1ext}
&P_1\e{k}\sim P\e{k}
\end{align}
as $dist(y,c\e{+}\cup c\e{-})\to\infty$, if and only if 
\begin{equation}\label{compatibilityconditions}
M_1=M_3,\quad tr(M_1)=tr(M_3)=0\quad\textit{and}\quad P\e{3}-P\e{1}=2\kappa.
\end{equation}
 Moreover, this solution has the following form
\begin{align}\label{v1}
&v_1\e{k}(y)=B_{k}y+w_{k}\quad\textit{in}\quad \tilde{\Omega}_0\e{k}\quad\textit{for}\quad k=1,2,3.\\\label{P1}
&P_1\e{k}(y)=P\e{k}\quad\textit{in}\quad \tilde{\Omega}_0\e{k}\quad\textit{for}\quad k=1,2,3.
\end{align} 
The matrices $B_{k}$ are given by 
\begin{equation}\label{matricesBk}
B_{k}=M_k\quad\textit{for}\quad k=1,3
\end{equation}
and 
\begin{equation}\label{matrizB2}
B_{2}-B_{1}=(\frac{\mu}{\mu_2}-1)(\tau(\xi_0)(B_{1}+(B_1)\e{T})\cdot n(\xi_0))(\tau(\xi_0)\otimes n(\xi_0))
\end{equation}
The vectors $w_{k}$ are given by
\begin{equation}\label{vectoresw}
w_{3}=w_{2},\quad w_{1}=w_{2}+h(\xi_0)(B_{1}-B_{2})n(\xi_0)
\end{equation}
On the other hand, 
\begin{equation}\label{presionesP1}
P_1\e{k}=P\e{k}\quad\textit{for}\quad k=1,3 
\end{equation}
and
\begin{equation}\label{presionP12}
P_1\e{2}=\frac{P_1\e{3}+P_1\e{1}}{2}-2(\mu-\mu_2)n(\xi_0)B_{1}\cdot n(\xi_0)
\end{equation}

\end{lem}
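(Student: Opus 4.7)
The plan is to substitute the piecewise affine/constant ansatz \eqref{v1}-\eqref{P1} directly into \eqref{sistmv1}-\eqref{condO14}: every condition then becomes an algebraic relation among the matrices $B_k$, vectors $w_k$ and scalars $P^k$. Since $\Delta(B_k y + w_k) = 0$ and $\nabla P^k = 0$, the Stokes equations \eqref{sistmv1} hold trivially inside each region, so the only constraints come from incompressibility, continuity of $v_1$ across the interfaces, the normal-stress jump conditions on $c^+$ and $c^-$, and the matching at infinity. To make the computation transparent I would fix local orthonormal coordinates with $\tau(\xi_0) = e_1$ and $n(\xi_0) = e_2$, so that $c^+ = \{y_2 = 0\}$ and $c^- = \{y_2 = -h(\xi_0)\}$.

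In the outer regions $k = 1, 3$ the ansatz is already affine, so the matching conditions \eqref{v1ext}-\eqref{P1ext} force $B_k = M_k$ and $P_1^k = P^k$ directly, giving \eqref{matricesBk} and \eqref{presionesP1}. Incompressibility $\mathrm{tr}(B_k) = 0$ in each region then yields $\mathrm{tr}(M_1) = \mathrm{tr}(M_3) = 0$. Next, continuity of $v_1$ on $c^+$, regarded as an identity in $y_1$, is equivalent to $B_3 e_1 = B_2 e_1$ together with $w_3 = w_2$; continuity on $c^-$ gives $B_2 e_1 = B_1 e_1$ and $w_1 = w_2 + h(\xi_0)(B_1 - B_2) e_2$, which is \eqref{vectoresw}. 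At this stage the first columns of $B_1, B_2, B_3$ all coincide and, thanks to the vanishing trace, each $B_k$ is determined up to the single entry $b_k := (B_k)_{12}$.

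The stress jumps \eqref{condO13}-\eqref{condO14}, tested separately against $\tau(\xi_0)$ and $n(\xi_0)$, provide the remaining four scalar equations. The two tangential components both take the form $\mu(b_k + \beta) = \mu_2(b_2 + \beta)$ for $k = 1, 3$, where $\beta$ denotes the common value of $(B_k)_{21}$; hence $b_1 = b_3$ and consequently $B_1 = B_3$, which is $M_1 = M_3$. Either tangential relation then produces \eqref{matrizB2} once $b_1 + \beta$ is recognized as $\tau(\xi_0) \cdot (B_1 + B_1^T) n(\xi_0)$. The two normal components contain $\mp\kappa$ from the two interfaces and the same viscous term depending on $\alpha := (B_1)_{11} = -(B_1)_{22}$; their difference cancels the viscous term and yields $P^3 - P^1 = 2\kappa$, while their sum cancels $\kappa$ and produces \eqref{presionP12} after writing $\alpha = -n(\xi_0) \cdot B_1 n(\xi_0)$.

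The converse direction is then immediate: granted \eqref{compatibilityconditions}, the formulas just derived supply an explicit piecewise affine solution whose verification reduces to substitution into the same algebraic relations (with $w_2$ remaining a free parameter). I do not anticipate any deep obstacle; the only care-demanding point is sign bookkeeping, because on $c^-$ the outer normal to the limiting strip $\tilde\Omega_0^2$ points along $-n(\xi_0)$, which is also the source of the sign reversal on the right-hand side of \eqref{condO14} relative to \eqref{condO13}.
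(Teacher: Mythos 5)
Your proposal is correct and follows essentially the same route as the paper: substitute the piecewise affine ansatz, let continuity across $c\e{+}$ and $c\e{-}$ fix the common column $B_k\tau(\xi_0)$ and the vectors $w_k$, use the tangential stress balance to get $B_1=B_3$ and \eqref{matrizB2}, and take the difference and sum of the normal stress balances to obtain $P\e{3}-P\e{1}=2\kappa$ and \eqref{presionP12}. The only (purely presentational) difference is that you work in explicit coordinates $\tau(\xi_0)=e_1$, $n(\xi_0)=e_2$, whereas the paper argues coordinate-free via the traceless diagonal matrices $D_1$, $D_3$.
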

\begin{nota}
Notice that the family of solutions depend on the vector $w_2\in\R\e{2}$ that can be chosen arbitrarily.
\end{nota}
\begin{proof}
We look for piecewise affine solutions of \eqref{sistmv1}-\eqref{P1ext}, i.e. with the form \eqref{v1}-\eqref{P1}. Then the Stokes equations \eqref{sistmv1} immediately hold in $\tilde{\Omega}_0\e{k}$ for $k=1,2,3$. The matching conditions \eqref{v1ext}-\eqref{P1ext} imply \eqref{matricesBk} and \eqref{presionesP1}. Moreover, the incompressibility condition in \eqref{sistmv1} implies that 
\begin{equation}\label{trazas}
tr(B_{k})=0\quad\textit{for}\quad k=1,2,3.
\end{equation}
Therefore, the matrix $M_k$ hold that $tr(M_k)=0$ for $k=1,3$.

Using condition \eqref{condO11}, we obtain $$B_{3}\tau(\xi_0)\tilde{\xi}+w_{3}=B_{2}\tau(\xi_0)\tilde{\xi}+w_{2}$$
$$B_{1}(-h(\xi_0)n(\xi_0)+\tau(\xi_0)\tilde{\xi})+w_{1}=B_{2}(-h(\xi_0)n(\xi_0)+\tau(\xi_0)\tilde{\xi})+w_{2}$$ for all $\tilde{\xi}\in\R$. Therefore 
\begin{equation}\label{relacionbktau}
B_{1}\tau(\xi_0)=B_{2}\tau(\xi_0)=B_{3}\tau(\xi_0)
\end{equation} 
and we obtain \eqref{vectoresw}.
Since $\nabla_y v_1\e{k}(y)=B_{k}$ for $k=1,2,3$, the tangent component of the conditions \eqref{condO13} and \eqref{condO14} are
\begin{align*}
&\tau(\xi_0)\cdot\big(\mu(B_{3}+(B_{3})\e{T})-\mu_2(B_{2}+(B_{2})\e{T})\big) n(\xi_0)=0\\
&\tau(\xi_0)\cdot\big(\mu(B_{1}+(B_{1})\e{T})-\mu_2(B_{2}+(B_{2})\e{T})\big)n(\xi_0)=0
\end{align*}
 Thus, the symmetric matrices $B_{k}+(B_{k})\e{T}$ for $k=1,2,3$ are related by means of the following formulas:
\begin{align}\label{tangtensor1}
&\mu(B_{3}+(B_{3})\e{T})-\mu_2(B_{2}+(B_{2})\e{T})=D_{3}\\\label{tangtensor2}
&\mu(B_{1}+(B_{1})\e{T})-\mu_2(B_{2}+(B_{2})\e{T})=D_{1}
\end{align}
where $D_{k}$ are diagonal matrices that due to \eqref{trazas} satisfy $tr(D_{1})=tr(D_{3})=0$. Therefore, each of then can be characterised by means of just to a number. Multiplying \eqref{relacionbktau} by $\tau(\xi_0)$ we obtain that $\tau(x_0)\cdot D_{1}\tau(x_0)=\tau(x_0)\cdot D_{3}\tau(x_0)$, then $D_1=D_3$. Now if we subtract \eqref{tangtensor1} and \eqref{tangtensor2} we have that $B_3+(B_3)\e{T}=B_1+(B_1)\e{T}$. Multiplying by $\tau(\xi_0)$ and $n(\xi_0)$ and using \eqref{relacionbktau} we obtain $B_1=B_3$. Thus, multiplying \eqref{tangtensor1} by $\tau(\xi_0)$ and $n(\xi_0)$ we obtain that $$\tau(\xi_0)\cdot B_2 n(\xi_0)=(\frac{\mu}{\mu_2}-1)n(\xi_0)\cdot B_3\tau(\xi_0)+\frac{\mu}{\mu_2}\tau(\xi_0)\cdot B_1 n(\xi_0).$$ Therefore, we obtain \eqref{matrizB2}.

The normal component of conditions \eqref{condO13} and \eqref{condO14} give us
\begin{align*}
&n(\xi_0)\big(\mu(B_3+(B_3)\e{T})-\mu_2(B_2+B_2\e{T})-(P\e{3}-P\e{2})I\big)n(\xi_0)=-\kappa(\xi_0)\\
&n(\xi_0)\big(\mu(B_1+(B_1)\e{T})-\mu_2(B_2+B_2\e{T})-(P\e{1}-P\e{2})I\big)n(\xi_0)=\kappa(\xi_0).
\end{align*}
Hence, using that $B_1=B_3$ and \eqref{matrizB2}
\begin{align}\label{normtensor1}
&2(\mu -\mu_2)n(\xi_0)\cdot B_1 n(\xi_0)-(P\e{3}-P\e{2})=-\kappa(\xi_0),\\\label{normaltensor2}
&2(\mu-\mu_2)n(\xi_0)\cdot B_1 n(\xi_0)-(P\e{1}-P\e{2})=\kappa(\xi_0).
\end{align}
Subtracting \eqref{normtensor1} and \eqref{normaltensor2} we have the compatibility condition $$P_1\e{3}-P_1\e{1}=2\kappa(\xi_0)$$ and adding \eqref{normtensor1} and \eqref{normaltensor2} we obtain \eqref{presionP12}.
\end{proof}

We now compute the effect in the outer region of the thickness of the layer of fluid in $\Omega_2$. We recall that we are assuming that $(u_\varepsilon, p_\varepsilon)$ admit an expansion as \eqref{expensionup}. This expansion is expected to be valid at distances of $\Omega_2$ much larger than $\varepsilon$. 

On the other hand, assuming that the limits of the functions $u_0\e{k}(x)$, $\nabla u_0\e{k}(x)$ and $p\e{k}_0(x)$ as $x\to x_0$ at each side of $\om{+}$ exist and using the Taylor approximation at $x=x_0$ we obtain
\begin{align}\label{tayloru0p0}
&u_0\e{k}(x)=u_0\e{k}(x_0)+\nabla u_0\e{k}(x_0)(x-x_0)+\cdots,\quad p_0\e{k}(x)=p_0\e{k}(x_0)+\cdots
\end{align}
for $k=1,3$, then using \eqref{cambiovariable} we obtain the following matching condition for $(v_\varepsilon, P_\varepsilon)$:
\begin{align*}
v\e{k}_\varepsilon(y)\sim u_0\e{k}(x_0)+\varepsilon \nabla u_0\e{k}(x_0)y+\cdots,\quad P\e{k}_\varepsilon(y)\sim \varepsilon p\e{k}_0(x_0)+\cdots
\end{align*} 
which is valid for $dist(x,x_0)\to 0$ and $dist(y,c\e{+}\cup c\e{-})\to\infty$, for any $x_0\in\om{+}$ and $k=1,3$. Using the expansion \eqref{expansionvp}, for the terms of order 1 we recover the matching condition \eqref{limitesv0p0} whence \eqref{solucionv0P0} follows. The terms of order $\varepsilon$ yields the matching conditions for $(v_1,P_1)$:
\begin{align}\label{matchingv1sinu1}
v_1\e{k}(y)\sim \nabla u_0\e{k}(x_0)y\quad \textit{and}\quad P_1\e{k}\sim p_0\e{k}(x_0)
\end{align}
as $dist(y,c\e{+}\cup c\e{-})\to\infty$ for any $x_0\in\om{+}$ and $k=1,3$. Notice that Remark \ref{notau0} implies that 

\begin{equation}\label{continuidadu0p0}
\nabla u_0\e{3}(x_0)=\nabla u_0\e{1}(x_0),\quad p_0\e{3}(x_0)-p_0\e{1}(x_0)=2\kappa.
\end{equation}
 Then, Lemma \ref{matchingv1} suggests that $(v_1,P_1)$ are given by \eqref{v1}-\eqref{presionP12} where
\begin{equation}\label{mkpk}
M_k=\nabla u_0\e{k}(x_0)\quad\textit{and}\quad P\e{k}=p_0\e{k}
\end{equation}
for $k=1,3$. Notice that $w_2$, and therefore $w_1$ and $w_3$, it is not yet prescribed.

\begin{prop} \label{orden1ext}For any $\varepsilon>0$ small let $(u_\varepsilon,p_\varepsilon)\in (C\e{2}(\Omega;\R\e{2})\cap C(\bar{D};\R\e{2})\cap X\e{1}(\Omega;\R\e{2}))\times (C\e{1}(\Omega)\cap X\e{0}(\Omega))$ a family of solutions of \eqref{stokesepsilon}-\eqref{cond7} where the boundaries $\om{+}$ and $\om{-}$ are parametrized by \eqref{gama} and \eqref{gamaepsilon}, respectively. Suppose also that $\abs{\frac{u_\varepsilon-u_0}{\varepsilon}}$ is bounded and that $u_1=\lim_{\varepsilon\to 0}\frac{u_\varepsilon - u_0}{\varepsilon}$ and $p_1=\lim_{\varepsilon\to 0}\frac{p_\varepsilon-p_0}{\varepsilon}$ in $\Omega_{lim}$ where $(u_1,p_1)\in X\e{0}(\Omega_{lim};\R\e{2})\times X\e{0}(\Omega_{lim})$. Let us assume that $\frac{v_\varepsilon(y)-u_0(x_0)}{\varepsilon}$ and $P_\varepsilon(y)$ converge as $\varepsilon\to 0$ uniformly in compact sets of $y\in\R\e{2}$ to the functions $v_1$ and $P_1$ in \eqref{v1}-\eqref{presionP12}, \eqref{mkpk} for some $w_2\in\R\e{2}$. Then,
\begin{equation}\label{u1dist}
\begin{cases}
<\s_0(u_1,p_1),\varphi>=(\mu_2-\mu)\int_{\om{+}}hB_2:(\nabla\varphi+(\nabla\varphi)\e{T})ds+2(\mu-\mu_2)\int_{\om{+}}h(n\cdot B_2 n)\nabla\cdot\varphi ds\\
+\int_{\om{+}}\dx\e{2}hn\cdot\varphi ds+\int_{\om{+}}\dx{}\kappa h\tau\cdot\varphi ds\\
\nabla\cdot u_1=0\\
u_1=0\quad\textit{in}\quad\partial D
\end{cases}
\end{equation}
for all $\varphi\in C_c\e{\infty}(\Omega)$, where 
\begin{equation}\label{B2funcnabla}
B_{2}=B_2(x_0)=\nabla u_0\e{1}(x_0)+(\frac{\mu}{\mu_2}-1)(\tau(\xi_0)(\nabla u_0\e{1}(x_0)+(\nabla u_0\e{1}(x_0))\e{T})\cdot n(\xi_0))(\tau(\xi_0)\otimes n(\xi_0))
\end{equation}
for any $x_0\in\om{+}$, $x_0=\Gamma(\xi_0)$ and  $\s_0(u_1,p_1)\in\mathcal{D}'(D;\R\e{2})$ is given by \eqref{operadors0}.

\end{prop}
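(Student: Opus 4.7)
The plan is to start from the distributional identity \eqref{stokesdist} provided by Lemma \ref{distristokeseps}, test it against an arbitrary $\varphi\in C_c^\infty(D;\mathbb{R}^2)$, subtract the leading--order equation $\s_0(u_0,p_0) = -2\kappa n\,\delta_{\partial\Omega_2^+}$ from Proposition \ref{leadingext}, divide by $\varepsilon$ and pass to the limit $\varepsilon\to 0$. Both sides must be expanded to first order in $\varepsilon$.

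For the right-hand side I would expand $\kappa_\varepsilon n_\varepsilon\cdot\varphi(\Gamma_\varepsilon)|d\Gamma_\varepsilon|$ using Lemma \ref{L1}, the Jacobian $|d\Gamma_\varepsilon| = 1+\varepsilon h\kappa + O(\varepsilon^2)$, and $\varphi(\Gamma_\varepsilon) = \varphi(\Gamma) - \varepsilon h(\nabla\varphi) n + O(\varepsilon^2)$. The $O(1)$ piece gives $-2\int\kappa\,n\cdot\varphi\,ds$ as required, and the $O(\varepsilon)$ piece is the functional
\[
R[\varphi] := \int_{\partial\Omega_2^+}\!\left[h\kappa\,n\cdot(\nabla\varphi)n + (\partial_\xi^2 h - h\kappa^2)(n\cdot\varphi) - \kappa\,\partial_\xi h\,(\tau\cdot\varphi)\right]ds.
\]

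For the left-hand side I would decompose $\s_\varepsilon(u_\varepsilon,p_\varepsilon) - \s_0(u_0,p_0) = \s_0(u_\varepsilon-u_0,\,p_\varepsilon-p_0) + [\s_\varepsilon-\s_0](u_\varepsilon,p_\varepsilon)$. The second summand consists of a volume integral over $\Omega_2$ and the two surface integrals in \eqref{operadorepsilon}; integrating that volume term by parts via the algebraic identity $(\Delta\varphi + \nabla\cdot\nabla\varphi^T)_i = \partial_j T_{ij}$ with $T := \nabla\varphi+\nabla\varphi^T$ produces boundary contributions on $\partial\Omega_2^\pm$ that cancel exactly against those surface integrals, leaving only $(\mu-\mu_2)\int_{\Omega_2}\nabla u_\varepsilon : T\,dV$. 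Under the rescaling $y=(x-x_0)/\varepsilon$, Lemma \ref{matchingv1} together with \eqref{mkpk} gives $\nabla_x u_\varepsilon = \nabla_y v_1 + O(\varepsilon) = B_2+O(\varepsilon)$ throughout $\Omega_2$, so combining with the $\varepsilon$--Jacobian of $x = \Gamma(\xi)-\varepsilon r n(\xi)$ yields $\varepsilon(\mu-\mu_2)\int_{\partial\Omega_2^+} h\,B_2 : T\,ds + O(\varepsilon^2)$.

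The main obstacle lies in $\s_0(u_\varepsilon-u_0,\,p_\varepsilon-p_0)$: the velocity contribution is routine and produces $\varepsilon\,\mu\int_{\Omega_{lim}} u_1 \cdot (\Delta\varphi+\nabla\cdot\nabla\varphi^T)\,dV + o(\varepsilon)$ with \emph{no} extra correction from the thin slab (because $u_\varepsilon\to u_0$ uniformly on $\bar D$), but the pressure is delicate. Inside $\Omega_2$ the inner expansion gives $p_\varepsilon = P^2 + O(\varepsilon)$ with $P^2$ as in \eqref{presionP12}, whereas the pointwise value of $p_0$ throughout $\Omega_2\subset D_1$ tends to $P^1 := p_0^1(\Gamma)$; the mismatch $P^2 - P^1 = \kappa - 2(\mu-\mu_2)\,n\cdot B_1 n$ (using \eqref{presionP12} and \eqref{continuidadu0p0}) contributes an extra boundary term $\varepsilon\int h(P^2-P^1)\,\nabla\cdot\varphi\,ds$. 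Dividing by $\varepsilon$ and collecting,
\[
\s_0(u_1,p_1)[\varphi] = R[\varphi] - \int_{\partial\Omega_2^+} h(P^2-P^1)\nabla\cdot\varphi\,ds - (\mu-\mu_2)\int_{\partial\Omega_2^+} h\,B_2 : T\,ds.
\]
Using $n\cdot B_2 n = n\cdot B_1 n$ (which follows from \eqref{matrizB2} since $(\tau\otimes n)n = \tau\perp n$) and the decomposition $\nabla\cdot\varphi = n\cdot(\nabla\varphi)n + \tau\cdot(\nabla\varphi)\tau$ to collapse the $h\kappa$--terms of $R$ minus $h\kappa\,\nabla\cdot\varphi$ to $-h\kappa\,\tau\cdot(\nabla\varphi)\tau$, a final integration by parts of $-\kappa\partial_\xi h\,(\tau\cdot\varphi)$ with $\partial_\xi(\tau\cdot\varphi(\Gamma)) = \kappa(n\cdot\varphi) + \tau\cdot(\nabla\varphi)\tau$ from \eqref{derivnormal} eliminates every remaining $(\nabla\varphi)$--term and produces precisely the right-hand side of \eqref{u1dist}.
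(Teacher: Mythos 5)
Your proposal is correct and follows essentially the same route as the paper's proof: the same difference quotient of the two distributional identities, the same expansion of $\kappa_\varepsilon n_\varepsilon\,|d\Gamma_\varepsilon|$ and $\varphi(\Gamma_\varepsilon)$ via Lemma \ref{L1} yielding the functional $R[\varphi]$ (the paper's \eqref{limitedcha}), the same integration by parts in $\Omega_2$ that cancels the two interface integrals of $\mathcal{S}_\varepsilon$ and leaves $(\mu-\mu_2)\int_{\Omega_2}\nabla u_\varepsilon:(\nabla\varphi+\nabla\varphi^{T})\,dV$, the same use of the inner expansion to replace $\nabla u_\varepsilon$ by $B_2$ and $p_\varepsilon-p_0$ by $\kappa-2(\mu-\mu_2)\,n\cdot B_1 n$ on the thin slab, and the same final tangential integration by parts along $\partial\Omega_2^{+}$ using $\tau\cdot(\nabla\varphi)\tau=\partial_\xi(\tau\cdot\varphi)-\kappa\,n\cdot\varphi$. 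The only deviations are cosmetic bookkeeping (your splitting $\mathcal{S}_0(u_\varepsilon-u_0,p_\varepsilon-p_0)+[\mathcal{S}_\varepsilon-\mathcal{S}_0](u_\varepsilon,p_\varepsilon)$ versus the paper's $J_\varepsilon^{i}$, $I_\varepsilon^{i}$ decomposition) and the omission of the one-line observations that $\nabla\cdot u_1=0$ and $u_1=0$ on $\partial D$ pass to the limit, which is not a substantive gap.
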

\begin{proof}
Using \eqref{stokesdist} in Lemma \ref{distristokeseps} and \eqref{u0dist} in Proposition \ref{leadingext} we arrive at
\begin{equation}\label{restases0}
\frac{1}{\varepsilon}(<\s_\varepsilon(u_\varepsilon,p_\varepsilon),\varphi>-<\s_0(u_0,p_0),\varphi>)=\frac{1}{\varepsilon}(\int_{\om{-}}\kappa_\varepsilon n_\varepsilon\cdot\varphi ds+\int_{\om{+}}\kappa n\cdot\varphi ds)
\end{equation}

We now compute the limit as $\varepsilon\to 0$ of the right hand side of \eqref{restases0}. To this end we use \eqref{gamaepsilon},\eqref{normalepsilon} and \eqref{curvaturaepsilon}. Then,
\begin{align}\nonumber
&\lim_{\varepsilon\to 0}\frac{1}{\varepsilon}\big(\int_{\om{-}}\kappa_\varepsilon n_\varepsilon\cdot\varphi ds+\int_{\om{+}}\kappa n\cdot\varphi ds\big)=\\\nonumber
&=\lim_{\varepsilon\to 0}\frac{1}{\varepsilon}\big(\int_0\e{1}\big(\kappa_\varepsilon(\xi)n_\varepsilon(\xi)\cdot\varphi(\Gamma_\varepsilon(\xi))\sqrt{(1+\varepsilon h(\xi)\kappa(\xi))\e{2}+\varepsilon\e{2}(\dx{h}(\xi))\e{2}}+\kappa(\xi)n(\xi)\cdot\varphi(\Gamma(\xi))\big)d\xi\big)=\\\nonumber
&=\int_{0}\e{1}\dxx{h}(\xi)n(\xi)\cdot\varphi(\Gamma(\xi))d\xi-\int_0\e{1}\kappa(\xi)\dx{h}(\xi)\tau(\xi)\cdot\varphi(\Gamma(\xi))d\xi-\int_0\e{1}(\kappa(\xi))\e{2}h(\xi)n(\xi)\cdot\varphi(\Gamma(\xi))d\xi\\\label{limitedcha}
&+\int_{0}\e{1}\kappa(\xi)h(\xi)n(\xi)\nabla\varphi(\Gamma(\xi))\cdot n(\xi)d\xi
\end{align}
On the other hand, we also have 
\begin{align*}
&<\s_\varepsilon(u_\varepsilon,p_\varepsilon),\varphi>=\int_{\Omega\e{1}\cup\Omega\e{3}}\big(\mu u_\varepsilon(\Delta\varphi+\nabla\cdot\nabla\varphi\e{T})+p_\varepsilon\nabla\cdot\varphi\big)dV+\int_{\Omega\e{2}}\big(\mu_2 u_\varepsilon\e{2}(\Delta\varphi+\nabla\cdot\nabla\varphi\e{T})+p_\varepsilon\e{2}\nabla\cdot\varphi\big)dV+\\
&+(\mu-\mu_2)\int_{\om{+}}u_\varepsilon\e{2}(\nabla\varphi+\nabla\varphi\e{T})nds+(\mu-\mu_2)\int_{\om{-}}u_\varepsilon\e{2}(\nabla\varphi+\nabla\varphi\e{T})n_\varepsilon ds\equiv J_\varepsilon\e{1}+J_\varepsilon\e{2}+J_\varepsilon\e{3}+J_\varepsilon\e{4},\\
&<\s_0(u_0,p_0),\varphi>=\int_{\Omega\e{1}\cup\Omega\e{3}}\big(\mu u_0(\Delta\varphi+\nabla\cdot\nabla\varphi\e{T})+p_0\nabla\cdot\varphi\big)dV+\int_{\Omega\e{2}}\big(\mu u_0(\Delta\varphi+\nabla\cdot\nabla\varphi\e{T})+p_0\nabla\cdot\varphi\big)dV\\
&\equiv J_0\e{1}+J_0\e{2}.
\end{align*}
where we recall that $u_\varepsilon\e{k}$, $p_\varepsilon\e{k}$ denote the restriction of $u$ and $p$ in the domain $\Omega_k$ for $k=1,2,3$. Moreover, the function $p_0$ in $J_0\e{2}$ must be chosen as the one in $D_1$.

Using the Lebesgue Dominated Convergence Theorem we obtain that 
\begin{equation}\label{limiteop}
\lim_{\varepsilon\to 0}\frac{J_\varepsilon\e{1}-J_0\e{1}}{\varepsilon}=<\s_0(u_1,p_1),\varphi>
\end{equation}

Using integration by parts,
\begin{align*}
&J_\varepsilon\e{2}=-\mu_2\int_{\Omega\e{2}}\nabla u_\varepsilon\e{2}(\nabla\varphi+\nabla\varphi\e{T})dV+\mu_2\int_{\om{+}}u_\varepsilon\e{2}(\nabla\varphi+\nabla\varphi\e{T})nds+\\
&+\mu_2\int_{\om{-}}u_\varepsilon\e{2}(\nabla\varphi+\nabla\varphi\e{T})n_\varepsilon ds+\int_{\Omega_2}p_\varepsilon\e{2}\nabla\cdot\varphi dV
\end{align*}
therefore,
\begin{align*}
&J_\varepsilon\e{2}+J_\varepsilon\e{3}+J_\varepsilon\e{4}=-\mu_2\int_{\Omega\e{2}}\nabla u_\varepsilon\e{2}(\nabla\varphi+\nabla\varphi\e{T})dV+\mu\int_{\om{+}}u_\varepsilon\e{2}(\nabla\varphi+\nabla\varphi\e{T})nds+\mu\int_{\om{-}}u_\varepsilon\e{2}(\nabla\varphi+\nabla\varphi\e{T})n_\varepsilon ds\\
&+\int_{\Omega_2}p_\varepsilon\e{2}\nabla\cdot\varphi dV
\end{align*}
Thus, integrating by parts again
\begin{align*}
&J_\varepsilon\e{2}+J_\varepsilon\e{3}+J_\varepsilon\e{4}=(\mu-\mu_2)\int_{\Omega\e{2}}\nabla u_\varepsilon\e{2}(\nabla\varphi+\nabla\varphi\e{T})dV+\mu\int_{\Omega\e{2}}u_\varepsilon\e{2}(\Delta\varphi+\nabla\cdot\nabla\varphi\e{T})dV+\int_{\Omega_2}p_\varepsilon\e{2}\nabla\cdot\varphi dV
\end{align*}
Hence, 
\begin{align}\nonumber
&J_\varepsilon\e{2}+J_\varepsilon\e{3}+J_\varepsilon\e{4}-J_0\e{2}=(\mu-\mu_2)\int_{\Omega\e{2}}\nabla u_\varepsilon\e{2}(\nabla\varphi+\nabla\varphi\e{T})dV+\mu\int_{\Omega\e{2}}(u_\varepsilon\e{2}-u_0)(\Delta\varphi+\nabla\cdot\nabla\varphi\e{T})dV\\\label{sumajotas}
&+\int_{\Omega\e{2}}(p_\varepsilon\e{2}-p_0)\nabla\cdot\varphi dV\equiv I_\varepsilon\e{1}+I_\varepsilon\e{2}+I_\varepsilon\e{3}
\end{align}

We now use that, by assumption, for any $x_0\in\om{+}$ we can approximate $u_\varepsilon$ at distances of order $\varepsilon$ of $x_0$ by means of \eqref{cambiovariable}, \eqref{v1}-\eqref{presionP12} and \eqref{mkpk}. In particular, we use that 
\begin{align}\label{limitgrad}
\nabla u_\varepsilon(x)\to B_2\quad\textit{for}\quad\abs{x-x_0}\leq C\varepsilon,\quad x\in\Omega_2
\end{align}
with $B_2$ given by \eqref{matricesBk}, \eqref{matrizB2} and \eqref{mkpk}. Moreover, our assumption of the functions $(u_\varepsilon,p_\varepsilon)$ imply, using \eqref{cambiovariable}, \eqref{presionesP1} and \eqref{presionP12}, that $p_\varepsilon-p_0=\frac{P_1\e{3}-P_1\e{1}}{2}-2(\mu-\mu_2)n(\xi_0)\cdot B_1 n(\xi_0)+o(1)$ for $\abs{x-x_0}\leq C\varepsilon$ whence
\begin{equation}\label{limitp}
p_\varepsilon-p_0\to\kappa(\xi_0)-2(\mu-\mu_2)n(\xi_0)\cdot B_1 n(\xi_0)\quad\textit{for}\quad\abs{x-x_0}\leq C\varepsilon,\quad x\in\Omega_2
\end{equation}
where $B_1=\nabla u_0\e{1}(x_0)$.

Hence, using that the thickness of the domain $\Omega_2$ is of order $\varepsilon$ (c.f. \eqref{gamaepsilon}) and \eqref{limitgrad}, it follows that
\begin{equation}\label{limite3}
\lim_{\varepsilon\to 0}\frac{I_\varepsilon\e{1}}{\varepsilon}=(\mu-\mu_2)\int_{\om{+}}hB_2(\nabla\varphi+\nabla\varphi\e{T})ds
\end{equation}
with $B_2$ as in \eqref{B2funcnabla}. By assumption $\frac{u_\varepsilon-u_0}{\varepsilon}$ is bounded then using again that the thickness of $\Omega_2$ is of order $\varepsilon$ we readily obtain  
\begin{equation}\label{limite4}
\lim_{\varepsilon\to 0}\frac{I_\varepsilon\e{2}}{\varepsilon}=0.
\end{equation}
Finally, \eqref{gamaepsilon} and \eqref{limitp} yield

\begin{equation}\label{limite5}
\lim_{\varepsilon\to 0}\frac{I_\varepsilon\e{3}}{\varepsilon}=\int_{\om{+}}h\kappa\nabla\cdot\varphi ds-2(\mu-\mu_2)\int_{\om{+}}h(n\cdot B_2 n)\nabla\cdot\varphi ds
\end{equation}
where we use that $n\cdot B_1 n=n\cdot B_2 n$.

Combining \eqref{limiteop},\eqref{sumajotas}, \eqref{limite3}-\eqref{limite5} we can compute the limit as $\varepsilon\to 0$ of the left hand side of \eqref{restases0}. The limit of the right hand side has been computed in \eqref{limitedcha}. Then, 
\begin{align}\nonumber
&<\s_0(u_1,p_1),\varphi>+(\mu-\mu_2)\int_{\om{+}}hB_2(\nabla\varphi+(\nabla\varphi)\e{T})ds+\int_{\om{+}}h\kappa\nabla\cdot\varphi ds-2(\mu-\mu_2)\int_{\om{+}}h(n\cdot B_1 n)\nabla\cdot\varphi ds\\\label{limitefinal}
&=\int_{\om{+}}\dx\e{2}hn\cdot\varphi ds-\int_{\om{+}}\kappa\dx{}h\tau\cdot\varphi ds-\int_{\om{+}}\kappa\e{2} h n\cdot\varphi ds+\int_{\om{+}}h\kappa n\cdot\nabla\varphi n ds
\end{align}

Using that $\nabla\cdot\varphi=tr(\nabla\varphi)=\tau\cdot\nabla\varphi\tau+n\cdot\nabla\varphi n$ as well as \eqref{phi11} we obtain
\begin{align*}
&\int_{\om{+}}\kappa h n\cdot\nabla\varphi nds-\int_{\om{+}}\kappa h\nabla\cdot\varphi ds=-\int_{\om{+}}\kappa h\tau\cdot\nabla\varphi\tau ds=\int_{\om{+}}\dx(\kappa h)\tau\cdot\varphi ds+\int_{\om{+}}\kappa\e{2}hn\cdot\varphi ds,
\end{align*}
where we have integrated by parts in order to derive the first integral on the right. Plugging this identity into \eqref{limitefinal} we obtain \eqref{u1dist} after some simplifications. 
The condition $u_1=0$ in $\partial D$ follows from \eqref{cond1} as well as the fact that $u_0=U$ in $\partial D$.
\end{proof}

Equation \eqref{u1dist} can be interpreted as an equation for $(u_1,p_1)$ in the sense of distributions. In particular assuming that $(u_1,p_1)$ are smooth away of the curve $\om{+}$, we can obtain suitable jump conditions for $u_1$, $\nabla u_1$ and $p_1$ along this curve.

\begin{lem}
Let be  $(u_0,p_0)\in [C\e{2}(\Omega_{lim})\cap X\e{1}(\Omega_{lim})]\times [C\e{1}(\Omega_{lim})\cap X\e{0}(\Omega_{lim})]$ with $u_0=U$ in $\partial D$ satisfying \eqref{stokesu0} and \eqref{condicionesu0}. Suppose that $(u_1,p_1)\in [C\e{2}(\Omega_{lim})\cap X\e{1}(\Omega_{lim})]\times [C\e{1}(\Omega_{lim})\cap X\e{0}(\Omega_{lim})]$ with $u_1=0$ in $\partial D$. Then \eqref{u1dist} is satisfied for every $\varphi\in C_c\e{\infty}(\Omega)$ if and only if $(u_1,p_1)$ solves 
\begin{equation}\label{stokesu1}
\mu\Delta u_1=\nabla p_1,\quad \nabla\cdot u_1=0\quad\textit{in}\quad \Omega_{lim}
\end{equation} with the following boundary conditions:
\begin{equation}\label{bcu1}
\begin{cases}
(u_1\e{3}-u_1\e{1})\cdot\tau=\frac{\mu-\mu_2}{\mu}h(\tau B_{2}\cdot n+nB_{2}\cdot\tau)\quad\textit{in}\quad\om{+}\\
(u_1\e{3}-u_1\e{1})\cdot n=0\quad\textit{in}\quad\om{+}\\
\tau\cdot(\Sigma\e{3}_1-\Sigma\e{1}_1) n=h\dx{}\kappa-4(\mu-\mu_2)\dx(hn\cdot B_2 n)\quad\textit{in}\quad\om{+}\\
n\cdot(\Sigma\e{3}_1-\Sigma\e{1}_1)n=\dx\e{2}h-4(\mu-\mu_2)\kappa h n\cdot B_2 n\quad\textit{in}\quad\om{+}
\end{cases}
\end{equation}
where $\Sigma_1\e{k}=\mu(\nabla u_1\e{k}+(\nabla u_1\e{k})\e{T})-p_1\e{k}I$ for $k=1,3$ and $B_2$ is as in \eqref{B2funcnabla}.
\end{lem}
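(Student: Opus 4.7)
The plan is to parallel the proof of Lemma \ref{bcu0}, converting the distributional identity \eqref{u1dist} into pointwise jump relations across $\om{+}$ via integration by parts, and then reading off the four boundary conditions via Lemma \ref{lemavarphi}. First, taking test functions supported in $\Omega_{lim}\setminus\om{+}$ causes every boundary integral on both sides of \eqref{u1dist} to vanish; integration by parts in the definition \eqref{operadors0} then forces $\mu\Delta u_1=\nabla p_1$ and $\nabla\cdot u_1=0$ in each connected component of $\Omega_{lim}$, together with $u_1|_{\partial D}=0$ from the hypotheses. This proves \eqref{stokesu1}.

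For a general test function $\varphi\in C_c^\infty(D;\R\e{2})$, the same integration by parts carried out in the proof of Lemma \ref{bcu0} (see \eqref{s0cond}), now applied to $(u_1,p_1)$, combined with the bulk Stokes equations just established yields
\[
<\s_0(u_1,p_1),\varphi> = -\mu\int_{\om{+}}(u_1\e{3}-u_1\e{1})\cdot(\nabla\varphi+\nabla\varphi\e{T})n\,ds + \int_{\om{+}}[\tau\cdot(\Sigma_1\e{3}-\Sigma_1\e{1})n](\tau\cdot\varphi)\,ds + \int_{\om{+}}[n\cdot(\Sigma_1\e{3}-\Sigma_1\e{1})n](n\cdot\varphi)\,ds.
\]
The right-hand side of \eqref{u1dist} can be brought into the format required by Lemma \ref{lemavarphi} using the identities $(u_1\e{3}-u_1\e{1})\cdot(\nabla\varphi+\nabla\varphi\e{T})n = [(u_1\e{3}-u_1\e{1})\otimes n]:(\nabla\varphi+\nabla\varphi\e{T})$ and $\nabla\cdot\varphi = \tfrac{1}{2}I:(\nabla\varphi+\nabla\varphi\e{T})$. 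Rearranging, the equation \eqref{u1dist} becomes
\[
\int_{\om{+}} M:(\nabla\varphi+\nabla\varphi\e{T})\,ds + \int_{\om{+}} a\,(\tau\cdot\varphi)\,ds + \int_{\om{+}} b\,(n\cdot\varphi)\,ds = 0
\]
for all $\varphi\in C_c^\infty(D;\R\e{2})$, where
\[
M = -\mu(u_1\e{3}-u_1\e{1})\otimes n + (\mu-\mu_2)hB_2 - (\mu-\mu_2)h(n\cdot B_2 n)I,
\]
\[
a = \tau\cdot(\Sigma_1\e{3}-\Sigma_1\e{1})n - h\,\partial_\xi\kappa, \qquad b = n\cdot(\Sigma_1\e{3}-\Sigma_1\e{1})n - \partial_\xi^2 h.
\]

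Applying Lemma \ref{lemavarphi} then produces the four identities in \eqref{conclusionlema}. The crucial algebraic ingredient is the trace-free property $\operatorname{tr}(B_2) = 0$, which is immediate from $\nabla\cdot u_0 = 0$ together with $\tau\cdot n = 0$ in the definition \eqref{B2funcnabla}. A direct calculation yields
\[
\tau\cdot(M+M\e{T})n = -\mu[(u_1\e{3}-u_1\e{1})\cdot\tau] + (\mu-\mu_2)h(\tau\cdot B_2 n + n\cdot B_2\tau),
\]
\[
n\cdot(M+M\e{T})n = -2\mu[(u_1\e{3}-u_1\e{1})\cdot n], \qquad \tau\cdot(M+M\e{T})\tau = -4(\mu-\mu_2)h(n\cdot B_2 n),
\]
and the four identities of \eqref{conclusionlema} then reproduce line by line the four boundary conditions of \eqref{bcu1}. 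The converse implication is obtained by retracing these steps in reverse order, exactly as in the proof of Lemma \ref{bcu0}. The principal technical hazard is bookkeeping of signs: the vanishing of the $B_2$-contributions in $n\cdot(M+M\e{T})n$ depends on combining the trace relation with $n\cdot(B_2+B_2\e{T})n = 2(n\cdot B_2 n)$, and the factor $-4(\mu-\mu_2)$ in $\tau\cdot(M+M\e{T})\tau$ arises as $2(\mu-\mu_2)h(\tau\cdot B_2\tau - n\cdot B_2 n) = -4(\mu-\mu_2)h(n\cdot B_2 n)$, where the $(n\cdot B_2 n)I$-contribution in $M$ cancels half of this and the trace relation supplies the rest.
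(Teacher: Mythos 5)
Your argument is correct and is essentially the paper's own proof: after integrating by parts as in Lemma \ref{bcu0}, the paper likewise rewrites the first two terms on the right of \eqref{u1dist} as $-(\mu-\mu_2)\int_{\om{+}}h(B_2-(n\cdot B_2 n)I):(\nabla\varphi+\nabla\varphi\e{T})\,ds$ and applies Lemma \ref{lemavarphi} with exactly your choice of $M$, $a$ and $b$, using $\operatorname{tr}(B_2)=0$ to get $\tau\cdot B_2\tau=-n\cdot B_2 n$. Only a cosmetic remark: in $n\cdot(M+M\e{T})n$ the cancellation uses just $n\cdot(B_2+B_2\e{T})n=2\,n\cdot B_2 n$ against the $(n\cdot B_2 n)I$ correction (the trace relation is only needed for $\tau\cdot(M+M\e{T})\tau$), so your displayed identities and conclusions are unaffected.
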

\begin{proof}

Integrating by parts as in the proof of Lemma \ref{bcu0} (c.f. \eqref{s0cond}), we can write the operator $\s_0(u_1,p_1)$ as follows:
\begin{align}\label{s0u1cond}
&<\s_0(u_1,p_1),\varphi>=-\mu\int_{\om{+}}([u_1]\e{3}_1)\cdot(\nabla\varphi+\nabla\varphi\e{T})nds+\int_{\om{+}}\tau(\Sigma_1\e{3}-\Sigma_1\e{1})\cdot n(\tau\cdot\varphi)ds+\int_{\om{+}}n(\Sigma_1\e{3}-\Sigma_1\e{1})\cdot n(n\cdot\varphi)ds
\end{align}
where $u_1\e{3}-u_1\e{1}=[u_1]\e{3}_1$.

On the other hand, the first two terms on the right hand side of \eqref{u1dist} can be written as
\begin{align*}
&-(\mu-\mu_2)\int_{\om{+}}h(B_2-n\cdot B_2 nI):(\nabla\varphi+\nabla\varphi\e{T})ds
\end{align*}
Therefore using Lemma \ref{lemavarphi} with $M=(\mu-\mu_2)h(B_2-n\cdot B_2 nI)-\mu[u_1]\e{3}_1\otimes n$, $a=\tau\cdot(\Sigma_1\e{3}-\Sigma_1\e{1})n-h\dx{}\kappa$ and $b=n\cdot(\Sigma_1\e{3}-\Sigma_1\e{1})n-\dx\e{2}h$, as well as the fact that $tr(B_2)=\tau\cdot B_2\tau+n\cdot B_2 n=0$, then $\tau\cdot B_2\tau=-n\cdot B_2 n$. Thus, we obtain the desired result. 
\end{proof}
\subsection{Formulation of the Geometric Free Boundary Problem that yields the evolution of the thin domain}

We can now determine the functions $(v_1,P_1)$ in \eqref{expansionvp}. This will allow us to compute the velocity of the interfaces $\om{+}$ and $\om{-}$ up to order $\varepsilon$. To this end, we derive suitable matching conditions for the function $(v_1,P_1)$. Assuming that the limits of the functions $u_0\e{k}(x)$, $u_1\e{k}(x)$, $\nabla u_0\e{k}(x)$, $p_0\e{k}(x)$ and $p\e{k}_1(x)$ as $x\to x_0$ at each side of $\om{+}$ exist for $k=1,3$, let $x_0\in\om{+}$, then the function $(u_0,p_0)$ satisfy \eqref{tayloru0p0} and $(u_1,p_1)$ satisfy
\begin{align}\label{tayloru1p1}
u_1\e{k}(x)=u_1\e{k}(x_0)+\cdots,\quad p_1\e{k}(x)=p_1\e{k}(x_0)+\cdots
\end{align} 
for $k=1,3$. Therefore, using \eqref{cambiovariable}, \eqref{expensionup}, \eqref{tayloru0p0} and \eqref{tayloru1p1}, we obtain the following matching condition for $(v_\varepsilon, P_\varepsilon)$:
\begin{align}\label{expansionv1conu1}
v\e{k}_\varepsilon(y)\sim u_0\e{k}(x_0)+\varepsilon \nabla u_0\e{k}(x_0)y+\varepsilon u
_1\e{k}(x_0)\cdots,\quad P\e{k}_\varepsilon(y)\sim \varepsilon p\e{k}_0(x_0)+\cdots
\end{align} 
which is valid for $dist(x,x_0)\to 0$ and $dist(y,c\e{+}\cup c\e{-})\to\infty$, for any $x_0\in\om{+}$ and $k=1,3$. Notice that due to \eqref{bcu1} we can write
\begin{equation}\label{condicu1nabla}
u_1\e{3}(x_0)-u_1\e{1}(x_0)=(1-\frac{\mu}{\mu_2})h(\xi_0)(\tau(\xi_0)(\nabla u_{0}(x_0)+(\nabla u_0(x_0))\e{T})\cdot n(\xi_0))\tau(\xi_0)
\end{equation}  Using the expansion \eqref{expansionvp}, for the terms of order 1 we recover the matching condition \eqref{limitesv0p0} whence \eqref{solucionv0P0} follows. On the other hand, $(v_1,P
_1)$ satisfy the matching conditions \eqref{matchingv1sinu1}. Then, Lemma \ref{matchingv1} implies that $(v_1,P_1)$ are given by \eqref{v1} and \eqref{P1} with $B_1=\nabla u_0\e{1}(x_0)$ and $B_3=\nabla u_0\e{3}(x_0)$. We remark that the compatibility conditions in \eqref{compatibilityconditions} hold due to \eqref{continuidadu0p0}. Moreover, \eqref{expansionv1conu1} yields
\begin{equation}\label{wk}
w_k=u_1\e{k}(x_0)
\end{equation}
for $k=1,3$. It is worth to emphasize that condition \eqref{vectoresw} yields a relation between $w_1$ and $w_3$ which is satisfied because of the condition \eqref{condicu1nabla}.

To sum up, we have obtained the following approximation for the velocity field near each point $x_0\in\om{+}$:
\begin{equation}\label{aproxuepsilon}
u_\varepsilon(x)=
\begin{cases}
u_\epsilon\e{k}(x)=u_0(x_0)+\varepsilon(\nabla u_0(x_0)(x-x_0)+u_1\e{k}(x_0))+o(\varepsilon)\quad\textit{as}\quad\varepsilon\to 0,\quad\textit{for}\quad k=1,3\\
u_\varepsilon\e{2}(x)=u_0(x_0)+\varepsilon(B_2(x-x_0)+w_2)+o(\varepsilon)\quad\textit{for}\quad\varepsilon\to 0
\end{cases}
\end{equation}
for each $x\in D\cap\Omega_k$ for $k=1,2,3$ satisfying $\abs{x-x_0}\leq C\varepsilon$, with $B_2$ and $w_2$ obtain by means of \eqref{matrizB2}, \eqref{vectoresw} and \eqref{wk}.
We recall that the evolution of the interfaces $\om{+}(t)$ and $\om{-}(t)$ can be computed using \eqref{ecevol3} and \eqref{ecevol1}.

We recall that we parametrize the curves $\om{+}(t)$ and $\om{-}(t)$ as in \eqref{gama} and \eqref{gamaepsilon} respectively, for all $t\geq 0$. More precisely, we need to determine the functions $\Gamma$, $h$ as follows. We assume that $\Gamma\in C\e{1}([0,\infty);C\e{2}(\R,D))$ with $\Gamma(\xi+L(t),t)=\Gamma(\xi,t)$ for all $\xi\in\R$ and $t\geq 0$ where $L(t)=length(\om{+}(t))$. We assume also that $h\in C\e{1}([0,\infty);C\e{2}(\R))$ where $h(\xi,t)=h(\xi+L(t),t)$ with $h>0$.

We now reformulate \eqref{ecevol3} and \eqref{ecevol1} as a system of partial differential equations for $\Gamma$ and $h$. We emphasize that we need to choose the function $\Gamma$ in such a way that the arc length parametrization remains for all time, i.e., $\abs{\dx{\Gamma}(\xi,t)}=1$ for all $t\geq 0$. To this end, we choose $\Gamma(\xi,t)$ with the form $\partial_t\Gamma(\xi,t)=(V_n)n+\psi(\xi,t)\tau$ with $\psi\in C\e{1}([0,\infty);C\e{1}(\R))$ and $\psi(\xi+L(t),t)=\psi(\xi,t)$ where $\psi$ provide the tangential component of the velocity  of $\Gamma$. It turns out that in order to preserve the arc length parametrization we must choose the function $\psi$ in a very specific way.
\begin{lem}\label{arclengtime}Let $\sigma(t)=\{x\in D: x=\Gamma(\xi,t), \xi\in\R\}$ a Jordan curve parametrized by $\Gamma\in C\e{1}([0,\infty);C\e{2}(\R,D))$ with $\Gamma(\xi+L(t),t)=\Gamma(\xi,t)$ for all $\xi\in\R$ and $t\geq 0$ where $L(t)=length(\sigma(t))$ and $\abs{\dx{}\Gamma(\xi,0)}=1$ for all $\xi\in\R$. Suppose that 

\begin{equation}\label{lema34eq2}
\partial_t\Gamma(\xi,t)=\phi(\xi,t)n+\psi(\xi,t) \tau
\end{equation}
where $\phi,\psi:\R\times\R\e{+}\to\R$ are periodic functions with period $L(t)$. Then $\abs{\dx{}\Gamma(\xi,t)}=1$ for $t\geq 0$ if and only if 
\begin{equation}\label{lemapsi}
\psi(\xi,t)=c(t)+\int_{0}\e{\xi}\kappa(r,t)\phi(r,t)dr\quad\textit{for}\quad t\geq 0
\end{equation}
for some function $c: \R\e{+}\to\R$.
\end{lem}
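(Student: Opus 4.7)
The idea is to compute $\partial_t|\dx{\Gamma}|^2$ directly from the evolution equation \eqref{lema34eq2} and read off the condition on $\psi$ required to make this derivative vanish. Set $s(\xi,t)=\abs{\dx{}\Gamma(\xi,t)}$. Then $\dx{\Gamma}=s\tau$, and differentiating $\abs{\dx{\Gamma}}^{2}=\dx{\Gamma}\cdot\dx{\Gamma}$ in time and swapping the order of derivatives,
\begin{equation*}
s\,\partial_{t}s=\dx{\Gamma}\cdot\partial_{t}\dx{\Gamma}=s\,\tau\cdot\dx{}(\phi n+\psi\tau).
\end{equation*}
So the heart of the proof is to expand $\dx{}(\phi n+\psi\tau)$ in the $\{\tau,n\}$ frame.

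For this expansion I would invoke the Frenet relations. When $s=1$, \eqref{derivnormal} gives $\dx{n}=-\kappa\tau$ and $\dx{\tau}=\kappa n$; in general one has $\dx{n}=-s\kappa\tau$ and $\dx{\tau}=s\kappa n$ by the chain rule with the arc-length variable. Plugging these in,
\begin{equation*}
\dx{}(\phi n+\psi\tau)=\bigl(\dx{\psi}-\phi s\kappa\bigr)\tau+\bigl(\dx{\phi}+\psi s\kappa\bigr)n,
\end{equation*}
so $\tau\cdot\dx{}(\phi n+\psi\tau)=\dx{\psi}-\phi s\kappa$, and consequently
\begin{equation*}
\partial_{t}s=\dx{\psi}-\phi s\kappa.
\end{equation*}

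For the \emph{only if} direction, assume $s(\xi,t)\equiv1$. Then the displayed identity forces $\dx{\psi}=\phi\kappa$ for all $\xi$ and $t\geq0$. Integrating in $\xi$ from $0$ to $\xi$ yields \eqref{lemapsi} with $c(t):=\psi(0,t)$. For the \emph{if} direction, suppose $\psi$ has the form \eqref{lemapsi}. Then at any point where $s=1$ the curvature in the formula coincides with the Frenet $\kappa$ used above, so $\dx{\psi}=\phi\kappa$, giving $\partial_{t}s=0$. Since $s(\cdot,0)\equiv1$ by hypothesis, uniqueness for the ODE $\partial_{t}s=\dx{\psi}-\phi s\kappa$ at each $\xi$ (the right-hand side vanishes when $s=1$) implies $s\equiv1$ for all time. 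The $L(t)$-periodicity of $\psi$ is automatic once one checks that $\int_{0}\e{L(t)}\kappa(r,t)\phi(r,t)\,dr=0$, which follows because this integral equals the time derivative of $\oint\dx{\Gamma}\,d\xi=0$ around the closed Jordan curve $\sigma(t)$.

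The main subtlety is keeping the argument from being circular: the curvature $\kappa$ that appears in \eqref{lemapsi} is the arc-length curvature of the (possibly non-arc-length) curve $\Gamma(\cdot,t)$, and one must justify using the clean Frenet relations at points where $s=1$. Once the identity $\partial_{t}s=\dx{\psi}-\phi s\kappa$ is in hand, however, both directions reduce to a one-line integration in $\xi$ together with a standard ODE uniqueness argument in $t$.
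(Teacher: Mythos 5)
Your core computation is the same as the paper's: differentiate $\abs{\partial_\xi\Gamma}^2$ in $t$, exchange $\partial_t$ and $\partial_\xi$, expand $\partial_\xi(\phi n+\psi\tau)$ via the Frenet relations, and integrate $\partial_\xi\psi=\kappa\phi$ in $\xi$ to get \eqref{lemapsi}. You do go slightly further in the ``if'' direction: the paper applies \eqref{derivnormal} (the arc-length form of Frenet) directly, which presupposes in that direction exactly what is being proved, whereas your general identity $\partial_t s=\partial_\xi\psi-\phi s\kappa$ (valid while $s\neq 0$) reduces it to the linear ODE $\partial_t s=\kappa\phi\,(1-s)$ at each fixed $\xi$, so $s\equiv 1$ follows from uniqueness and $s(\cdot,0)\equiv 1$. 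That is a legitimate tightening, modulo spelling out that $s$ stays bounded away from $0$ by continuity/continuation (the paper's ``as long as $\abs{\partial_\xi\Gamma}\neq 0$'').

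Your closing periodicity remark, however, is incorrect. It is not true in general that $\int_0^{L(t)}\kappa(r,t)\phi(r,t)\,dr=0$: in the application of this lemma one has $\phi=u_0\cdot n$, and that integral equals $-\partial_t L(t)$ — this is exactly how \eqref{GFBP4} is derived — and the length is not conserved. The proposed justification also does not work: differentiating $\oint\partial_\xi\Gamma\,d\xi=0$ in time yields a vector identity in which $\tau$ and $n$ vary along the curve and the moving endpoint $\xi=L(t)$ contributes, so it does not decouple into the scalar statement $\int\kappa\phi\,dr=0$. Fortunately this aside is not needed for the lemma: the $L(t)$-periodicity of $\psi$ is a hypothesis, not a conclusion (and indeed a $\psi$ of the form \eqref{lemapsi} is $L(t)$-periodic only when $\int_0^{L(t)}\kappa\phi\,dr=0$, i.e.\ when the length is stationary), so deleting that remark leaves your proof of the stated equivalence intact.
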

\begin{proof}
Differentiating on $t$ we obtain that the condition $\abs{\dx{\Gamma}}\e{2}=1$ for $t\geq 0$ is equivalent to $\dx{\Gamma}\cdot\dx{\partial_t\Gamma}=0$ for $t\geq 0$ with $\abs{\dx{}\Gamma(\xi,0)}=1$. Therefore, if we use \eqref{lema34eq2}, we obtain that as long as $\abs{\dx{}\Gamma(\xi,t)}\neq 0$, the identity $\dx{\Gamma}\cdot\dx{\partial_t\Gamma}=0$ holds if and only if
\begin{equation}\label{lema34eq1}
(\dx{\phi}n+\phi\dx{n}+\dx{\psi}\tau+\psi\dx{\tau})\cdot\tau=-\phi\kappa+\dx{\psi}=0
\end{equation}
where we have used \eqref{derivnormal}.
Integrating \eqref{lema34eq1} with respect to $\xi$, we obtain \eqref{lemapsi} and the lemma follows.

\end{proof}
\begin{nota} 
The function $c(t)$ is an arbitrary smooth function which gives the tangential velocity of the point $x=\Gamma(0,t)$ at any time $t\geq 0$. Note that this tangential velocity can be chosen in an arbitrary manner at one point of the curve $\sigma(t)$ but it is uniquely determined at all the other points of the curve due to \eqref{lemapsi}.
\end{nota}
Now we use \eqref{ecevol3} and \eqref{ecevol1} in order to compute the evolution equations of $\Gamma(\xi,t)$ and $h(\xi,t)$. 

Equation \eqref{ecevol3} implies $\partial_t\Gamma\cdot n=u_\varepsilon(\Gamma)\cdot n$ then using \eqref{aproxuepsilon} with $k=3$ we obtain
\begin{equation}\label{evolgamma}
\partial_t\Gamma\cdot n=u_0(\Gamma)\cdot n+\varepsilon u_1\e{3}(\Gamma)\cdot n+o(\varepsilon)
\end{equation}
Using \eqref{ecevol1} and \eqref{gamaepsilon}, we can write
\begin{equation*}
(\partial_t(\Gamma-\varepsilon hn))\cdot n_\varepsilon=u_\varepsilon(\Gamma_\varepsilon)\cdot n_\varepsilon
\end{equation*} 
Combining this identity with  \eqref{normalepsilon}, \eqref{aproxuepsilon} for $k=1$ and the fact that $\partial_t n\cdot n=0$, we arrive at
\begin{align*}
&-\partial_t\Gamma(\xi,t)\cdot n(\xi,t)-\varepsilon(\dx{h}(\xi,t)\partial_t\Gamma(\xi,t)\tau(\xi,t)-\partial_t h(\xi,t))=\\
&-u_0(\Gamma(\xi,t))n(\xi,t)+\varepsilon\Big(h(\xi,t)(n(\xi,t)\cdot\nabla u_0\e{1}(\Gamma(\xi,t)) n(\xi,t))-u_1\e{1}(\Gamma(\xi,t))\cdot n(\xi,t)-\dx{h}(u_0(\Gamma(\xi,t))\cdot\tau(\xi,t))\Big)+o(\varepsilon)
\end{align*}
Notice that we can apply \eqref{normalepsilon} (c.f. Lemma \ref{L1}) because we choose $\Gamma(\xi,t)$ in such a way that $\abs{\dx{}\Gamma(\xi,t)}=1$ for $t\geq 0$. Then Lemma \ref{arclengtime} implies that $\partial_t\Gamma(\xi,t)\cdot\tau(\xi,t)=\psi(\xi,t)$ with $\psi$ as in \eqref{lemapsi} and $\phi=u_\varepsilon\cdot n$. Henceforth, using \eqref{aproxuepsilon} we obtain that $\psi(\xi,t)=\psi_0(\xi,t)+\mathcal{O}(\varepsilon)$ with
\begin{equation}\label{psi0}
\psi_0(\xi,t)=c(t)+\int_0\e{\xi}\kappa(r,t)u_0(\Gamma(r,t),t)\cdot n(r,t)dr
\end{equation} 
Thus using  also \eqref{evolgamma} and keeping only the terms of order $\varepsilon$, we derive the following equation for $h$:
\begin{align*}
&\partial_t h(\xi,t)+(u_0(\Gamma(\xi,t))\cdot\tau(\xi,t)-\psi_0(\xi,t))\dx{h}(\xi,t)=(n(\xi,t)\nabla u_0(\Gamma(\xi,t))\cdot n(\xi,t))h(\xi,t)\\
&+(u_1\e{3}(\Gamma(\xi,t))-u_1\e{1}(\Gamma(\xi,t)))\cdot n(\xi,t)
\end{align*}
The last term in this identity vanishes due to  the second equation in \eqref{bcu1}, therefore
\begin{align}\label{hevol}
\partial_t h(\xi,t)+(u_0(\Gamma(\xi,t))\cdot\tau(\xi,t)-\psi_0(\xi,t))\dx{h}(\xi,t)=(n(\xi,t)\nabla u_0(\Gamma(\xi,t))\cdot n(\xi,t))h(\xi,t)
\end{align}

Taking the limit as $\varepsilon\to 0$ of the equations \eqref{evolgamma},\eqref{psi0} and \eqref{hevol} we obtain the following system:  
\begin{align}\label{GFBP1}
&\partial_t\Gamma=(u_0(\Gamma)\cdot n)n+\psi_0(\xi,t)\tau\\\label{GFBP2}
&\psi_0(\xi,t)=\int_0\e{\xi}\kappa(r,t)u_0(\Gamma(r,t),t)\cdot n(r,t)dr\\\label{GFBP3}
&\partial_t h(\xi,t)+(u_0(\Gamma(\xi,t))\cdot\tau(\xi,t)-\psi_0(\xi,t))\dx{h}(\xi,t)=(n(\xi,t)\nabla u_0(\Gamma(\xi,t))\cdot n(\xi,t))h(\xi,t)
\end{align} 
where $u_0$ is given by \eqref{u0dist} (c.f. alternatively \eqref{stokesu0} and \eqref{condicionesu0}).
 This model provides the evolution of the thin domain $\Omega_2$ in the rescaling limit considered in this paper. We remark that the model \eqref{GFBP1}-\eqref{GFBP3} does not depend on the function $u_1$ and therefore it is independent of the viscosity $\mu_2$ of the fluid contained in the region $\Omega_2(t)$. 
 
 Notice that we are considering $c(t)=0$ in \eqref{psi0}. This does not suppose any lost of generality because if we had kept the function $c(t)$ we will have obtained different functions  $\Gamma$ and $h$ that we will parametrize the same curves $\om{+}$ and $\om{-}$.

The system \eqref{GFBP1}-\eqref{GFBP3} has to be solved in the domain $0<\xi<L(t)$ where $L(t)$ is the length of the curve $\om{+}$, with periodic boundary conditions $\Gamma(0,t)=\Gamma(L(t),t)$, $h(0,t)=h(L(t),t)$ and if $\om{+}\in C\e{k}$ we have to impose in addition  $\dx\e{l}\Gamma(0,t)=\dx\e{l}\Gamma(L(t),t)$ for $l\leq k$. In order to have a well defined PDE problem we obtain an evolution equation for $L(t)$.

Since $\Gamma(L(t),t)=\Gamma(0,t)$, differentiating in $t$ we obtain, $\dx\Gamma(L(t),t)\partial_t L(t)+\partial_t\Gamma(L(t),t)=\partial_t\Gamma(0,t)$. Now multiplying by $\dx\Gamma(L(t),t)$ and using that  $\dx\Gamma(L(t),t)=\dx\Gamma(0,t)$, $\abs{\dx\Gamma}=1$ and \eqref{GFBP1} we arrive at $\partial_t L(t)=\psi_0(0,t)-\psi_0(L(t),t)$. Therefore, using \eqref{GFBP2}, we can conclude that the evolution equation for $L$ is
\begin{equation}\label{GFBP4}
\partial_t L(t)=-\int_0\e{L(t)}\kappa(r,t)u_0(\Gamma(r,t),t)\cdot n(r,t)dr
\end{equation}
We refer to the system \eqref{u0dist}, \eqref{GFBP1}-\eqref{GFBP4} as the \textit{Geometric Free Boundary Problem }(GFBP).

The evolution of the boundaries of the region $\Omega_2(t)$ by means of \eqref{gama} and \eqref{gamaepsilon} can be approximated taking $\Gamma$ and $h$ as a solution of the PDE problem (GFBP).
\section{Well posedness of the geometric free boundary problem}\label{S4}
In this section we prove that the Geometric Free Boundary Problem is well posed locally in time. In order to do this, we introduce a technical property that will be repeatedly use in the following.

We will denote as $f\in C\e{\infty}_{p}([0,L(t)])$ the set of functions $f\in C\e{\infty}([0,L(t)])$ such that $\partial\e{\ell}f(0)=\partial\e{\ell}f(L(t))$ for $l=0,1,\cdots$. We will denote as $H_p\e{k}([0,L(t)])$ the Sobolev spaces obtained by means of the closure of the functions $f\in C\e{\infty}_p([0,L(t)])$ in the topology generated by the norm $\norm{f}_{H_p\e{k}}\e{2}=\sum_{l=0}\e{k}\int_0\e{1}\abs{\partial\e{\ell}f}\e{2}dx$. In several cases it will be convenient to identify the functions $f\in H_p\e{k}([0,L(t)])$ with the functions $f\in H_{loc}\e{k}(\R)$ with period $L(t)$.  Notice that for all $f\in H_p\e{k}([0,L(t)])$ we have that $\partial\e{\ell}f(0)=\partial\e{\ell}f(L(t))$ for $l=0,1,\cdots,k-1$. Moreover, we define 
\begin{equation}\label{GT}
\mathcal{G}_T=\{(\xi,t):0<\xi<L(t),0\leq t<T\}.
\end{equation}

Given $\Gamma\in H_p\e{1}([0,L(t)])$ we say that the Arc-Chord condition holds if $\mathcal{F}(\Gamma)\in L^{\infty}(\R\times\R)$ where 
\begin{equation}\label{arcchordcondition}
\F(\alpha,\xi)=\frac{\xi^{2}}{\abs{\Gamma(\alpha)-\Gamma(\alpha-\xi)}^{2}},\quad\alpha,\xi\in\R\quad\textit{for}\quad\xi\neq 0
\end{equation}
with
\begin{displaymath}
\F(\alpha,0)=\frac{1}{\abs{\partial_{\alpha}\Gamma(\alpha)}^{2}}.
\end{displaymath}
In these formulas as well as in similar expressions in the rest of the paper, we extend $\Gamma$ to the whole real line in a periodic manner with period $L(t)$.

\subsection{Well posedness for the curve $\Gamma$}
In this subsection we prove that the system of equations \eqref{GFBP1}, \eqref{GFBP2} and \eqref{GFBP4} is well posed locally in time.
\begin{thm}\label{thexist}
Suppose that $U\in W\e{1,\infty}([0,T],H\e{\frac{1}{2}}(\partial D))$, $\int_{\partial D}U\cdot n=0$. Let $L_0>0$, $\Gamma_{0}\in H_p^{k}([0,L_0];\R\e{2})$ for $k\ge 3$, $\Gamma_0([0,L_0])\subset D$ such that $\abs{\dx\Gamma_0}=1$ for all $\xi\in[0,L_0]$ and $\mathcal{F}(\Gamma_0)\in L\e{\infty}(S\e{1})$. Then, there exists $T=T(\Gamma_{0},U)$ as well as functions $L$, $\Gamma$ and $(u_0,p_0)$ with the following properties:
\begin{itemize}
\item[i.] $L\in C\e{1}([0,T])$ with $L(0)=L_0$.
\item[ii.] $u_0\in C\e{1}([0,T],H\e{1}(D))$ with $u_0=U$ in $\partial D$ and $p_0\in W\e{1,\infty}([0,T], L\e{2}(D))$. The pair $(u_0,p_0)$ satisfies \eqref{u0dist}. 
\item[iii.] $\Gamma\in W\e{1,\infty}([0,T],H\e{2}_p([0,L(t)]))\cap L\e{\infty}([0,T],H\e{k}_p([0,L(t)]))$ with $\Gamma(\cdot,0)=\Gamma_0(\cdot)$ and 

$\sup_{0\leq t\leq T}\norm{\partial_t\Gamma(\cdot,t)}_{H\e{k-2}([0,L(t)])}<\infty$. The function $(\xi,t)\to u_0(\Gamma(\xi,t))$ is continuous in $\mathcal{G}_T$ and it is Lipschitz continuous in the variable $\xi$ for every $t\in[0,T]$.
\end{itemize}
 The triplet $(\Gamma, L, u_0)$ satisfies \eqref{GFBP1}, \eqref{GFBP2} and \eqref{GFBP4} for $(\xi,t)\in\mathcal{G}_T$.
\end{thm}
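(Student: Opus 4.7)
My plan is to reduce the coupled system \eqref{GFBP1}--\eqref{GFBP4} to a closed quasilinear evolution equation for $\Gamma$ alone, by expressing $u_0$ as a functional of the curve. Given a closed $C^2$ curve $\Gamma$ in $D$ with bounded Arc-Chord, the Stokes problem \eqref{stokesu0}--\eqref{condicionesu0} admits a unique solution $u_0=u_0[\Gamma]\in H^1(D)$ for the boundary data $U$ on $\partial D$. I would make this explicit through a boundary integral representation: combine the Stokeslet fundamental solution of $\mu\Delta-\nabla p$ with a single-layer potential along $\Gamma$ carrying the jump $-2\kappa n$, and a correcting velocity that enforces $u_0=U$ on $\partial D$. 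The resulting operator $\Gamma\mapsto u_0[\Gamma]|_\Gamma$ splits into a principal singular integral of order $-1$ applied to $\kappa$ plus lower-order smooth contributions governed by $U$ and by the geometry of $\partial D$, in the spirit of the water-waves and Muskat references cited in the introduction.

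With $u_0[\Gamma]$ in hand, the system becomes $\partial_t\Gamma=\mathcal{N}[\Gamma]n+\psi_0\tau$, where $\mathcal{N}[\Gamma]=u_0[\Gamma]\cdot n$, and Lemma \ref{arclengtime} guarantees that the specific tangential $\psi_0$ in \eqref{GFBP2} preserves $|\dx{\Gamma}|=1$ provided it holds initially. To fix a spatial domain independent of $t$ I rescale $\tilde\Gamma(\alpha,t)=\Gamma(L(t)\alpha,t)$ with $\alpha\in[0,1]$, so that $L(t)$ enters only as a scalar coupled to $\tilde\Gamma$ through the ODE \eqref{GFBP4}. The structure is then a quasilinear evolution on $H^k_p([0,1];\R^2)$ whose principal symbol is of parabolic type of order one in $\Gamma$, because $\kappa$ is a second $\xi$-derivative of $\Gamma$ and the Stokes singular integral smooths by one derivative.

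I would obtain local existence via a regularization scheme: insert a mollifier $J_\delta$ into the right-hand side to produce an ODE $\partial_t\Gamma_\delta=J_\delta\bigl[(u_0[J_\delta\Gamma_\delta]\cdot n)n+\psi_0\tau\bigr]$ on the Banach space $H^k_p$ with $k\ge3$; the Picard-Lindel\"of theorem gives short-time solutions for each $\delta>0$. Uniform-in-$\delta$ estimates come from energy identities: differentiating $k$ times in $\xi$, testing against $\dx{k}\Gamma_\delta$ in $L^2$, and using the order-$-1$ smoothing together with the Calder\'on-type commutator bounds (controlled in turn by $\norm{\F}_{L^\infty}$ and $\norm{\Gamma_\delta}_{H^k}$) to absorb the apparently top-order term $\int\dx{k+1}(u_0\cdot n)\,\dx{k}\Gamma_\delta\cdot n$ into a dissipative contribution plus commutators of lower order. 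One then obtains a closed Gronwall-type inequality $\frac{d}{dt}\norm{\Gamma_\delta}_{H^k}^2\le C(\norm{\F(\Gamma_\delta)}_{L^\infty},\norm{U}_{W^{1,\infty}H^{1/2}})\,(1+\norm{\Gamma_\delta}_{H^k}^2)$ valid on a short time interval, a parallel estimate for $\norm{\F(\Gamma_\delta)}_{L^\infty}$ (which remains finite for $t$ small), and uniform control of $L(t)$ via \eqref{GFBP4}. Passing $\delta\to0$ by standard compactness and uniqueness arguments yields the regularity and properties asserted in (i)--(iii).

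The hard part will be the singular integral analysis in the energy estimate. The operator $\Gamma\mapsto u_0[\Gamma]\cdot n|_\Gamma$ is not a pure convolution: its kernel depends on the curve itself, and the presence of the outer boundary $\partial D$ and of the auxiliary region $\Omega_3$ couples $u_0$ to the Dirichlet data $U$ in a nonlocal way. Extracting the correct sign of the leading-order term, after linearization around a reference geometry and expressed in the arc-length variable, requires identifying the principal symbol as (essentially) $-\frac{1}{4\mu}|\dx{}|^{-1}$ acting on $\kappa$, and then reducing the remainder to genuine commutators that can be bounded by $\est{k}\norm{\Gamma}_{H^k}^2$. The bulk of the technical work lies precisely in these commutator estimates and in propagating the Arc-Chord condition, and most of the remaining steps are then standard parabolic machinery.
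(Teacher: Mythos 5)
Your proposal follows essentially the same route as the paper: the paper also represents $u_0$ as a Stokeslet layer potential along the curve carrying the jump $-2\kappa n$ plus a regular Stokes correction enforcing the data on $\partial D$ (Lemma \ref{lemu0galdi}), rescales to a fixed periodic domain with $L(t)$ as an auxiliary scalar (Proposition \ref{reformulacion}), regularizes with a mollifier and solves by Picard, and closes uniform $H^{3}$ plus Arc-Chord energy estimates by extracting a negative half-Laplacian dissipative term from the leading singular integral (Lemma \ref{lemcotadificil}) while bounding the remainders by commutator-type estimates, before passing to the limit by compactness. So the strategy and the key structural points you identify (order-one parabolic character, sign of the principal term, propagation of the Arc-Chord condition) coincide with the paper's proof.
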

We define 
\begin{equation}\label{GTtilde}
\mathcal{\tilde{G}}_T=\{(\eta,t):\eta\in S\e{1},t\in [0,T]\}
\end{equation}
Due to the periodicity of all the functions involved in the arguments we can identify $S\e{1}$ with the interval $[-\frac{1}{2},\frac{1}{2}]$ or with any other interval of length $1$. We use this identification repeatedly in the following arguments. 
It is convenient to reformulate of the problem \eqref{GFBP1}, \eqref{GFBP2} and \eqref{GFBP4} in a fixed domain.
\begin{prop}\label{reformulacion}
Suppose that $U$, $L_0$ and $\Gamma_0$ are as in the statement of Theorem \ref{thexist}. 
There exists a triplet $(\Gamma,L,u_0)$ as in Theorem \ref{thexist} if and only if, there exists a triplet $(\tilde{\Gamma},L,u_0)$ with the following properties
\begin{itemize}
\item [a)]$L$ and $(u_0,p_0)$ verify the properties i. and ii. of Theorem \ref{thexist}, respectively.
\item [b)]$\tilde{\Gamma}\in W\e{1,\infty}([0,T],H\e{2}(S\e{1}))\cap L\e{\infty}([0,T],H\e{k}(S\e{1}))$ with $\tilde{\Gamma}(\cdot,0)=\tilde{\Gamma}_0(\cdot)$ and $\sup_{0\leq t\leq T}\norm{\partial_t\tilde{\Gamma}(\cdot,t)}_{H\e{k-2}(S\e{1})}<\infty$. The function $(\eta,t)\to u_0(\tilde{\Gamma}(\eta,t))$ is continuous in $\mathcal{\tilde{G}}_T$ and it is Lipschitz continuous in the variable $\eta$ for every $t\in[0,T]$.
\item[c)] The following equations hold in $\mathcal{\tilde{G}}_T$.

\begin{align}\label{reformGFBP1}
&\partial_t\tilde{\Gamma}(\eta,t)=(u_0(\tilde{\Gamma}(\eta,t),t)\cdot \tilde{n}(\eta,t))\tilde{n}(\eta,t)+\tilde{\psi}(\eta,t)\tilde{\tau}(\eta,t)+(\partial_t L(t))\eta\tilde{\tau}(\eta,t)\\\label{reformGFBP6}
&\partial_t L(t)=-L(t)\int_0\e{1}\tilde{\kappa}(\eta,t)u_0(\tilde{\Gamma}(\eta,t),t)\cdot\tilde{n}(\eta,t)d\eta
\end{align}
where
\begin{align}\label{reformGFBP2}
&\tilde{\psi}(\eta,t)=L(t)\int_0\e{\eta}\tilde{\kappa}(r,t)u_0(\tilde{\Gamma}(r,t),t)\cdot\tilde{n}(r,t)dr\\\label{reformGFBP3}
&\tilde{\tau}(\eta,t)=\frac{1}{L(t)}\partial_\eta\tilde{\Gamma}(\eta,t)\\\label{reformGFBP4}
&\tilde{n}(\eta,t)=\frac{1}{L(t)}(\partial_\eta\tilde{\Gamma}(\eta,t))\e{\perp}\\\label{reformGFBP5}
&\tilde{\kappa}(\eta,t)=\frac{1}{L\e{2}(t)}\partial_\eta\e{2}\tilde{\Gamma}(\eta,t)\cdot\tilde{n}(\eta,t)
\end{align} 

\end{itemize}
\end{prop}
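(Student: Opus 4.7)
The equivalence is established through the natural change of variables $\eta = \xi/L(t)$, which maps the time-dependent interval $[0,L(t)]$ bijectively onto the fixed circle $S^{1}\cong[0,1]$. Concretely, I will prove the forward implication by setting $\tilde\Gamma(\eta,t):=\Gamma(L(t)\eta,t)$ and verifying each of (\ref{reformGFBP1})--(\ref{reformGFBP5}) by direct computation; and I will prove the converse by defining $\Gamma(\xi,t):=\tilde\Gamma(\xi/L(t),t)$, with the main work being the verification of the arc-length constraint $|\partial_\xi\Gamma|=1$.

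For the forward direction, I first apply the chain rule to obtain $\partial_\eta\tilde\Gamma(\eta,t)=L(t)\,\partial_\xi\Gamma(L(t)\eta,t)=L(t)\,\tau(L(t)\eta,t)$. Since $|\partial_\xi\Gamma|=1$, this gives $|\partial_\eta\tilde\Gamma|=L(t)$, so that (\ref{reformGFBP3}) identifies $\tilde\tau$ with the unit tangent to $\tilde\Gamma$ and (\ref{reformGFBP4}) with the unit normal. The identity (\ref{reformGFBP5}) follows from the Frenet relation $\partial_\xi^{2}\Gamma=\kappa\,n$ (see (\ref{derivnormal})) together with the scaling $\partial_\eta^{2}\tilde\Gamma=L(t)^{2}\partial_\xi^{2}\Gamma$. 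For the evolution equation, I differentiate $\tilde\Gamma(\eta,t)=\Gamma(L(t)\eta,t)$ in time:
\begin{equation*}
\partial_t\tilde\Gamma(\eta,t)=\partial_t\Gamma(L(t)\eta,t)+L'(t)\eta\,\partial_\xi\Gamma(L(t)\eta,t),
\end{equation*}
and substitute (\ref{GFBP1}) to recover (\ref{reformGFBP1}). The formulas (\ref{reformGFBP2}) and (\ref{reformGFBP6}) follow from the definition of $\psi_0$ in (\ref{GFBP2}) and the evolution law (\ref{GFBP4}) respectively, after the change of variables $r\mapsto L(t)r'$. The regularity assertions in (b) transfer between the two formulations because $L(t)$ and $1/L(t)$ are bounded on $[0,T]$, so the diffeomorphism $\eta\mapsto L(t)\eta$ preserves Sobolev classes, and the continuity/Lipschitz properties of $(\eta,t)\mapsto u_0(\tilde\Gamma(\eta,t))$ are equivalent to the corresponding properties of $(\xi,t)\mapsto u_0(\Gamma(\xi,t))$.

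For the reverse direction, set $\Gamma(\xi,t):=\tilde\Gamma(\xi/L(t),t)$ on $[0,L(t)]$. All the GFBP equations (\ref{GFBP1}), (\ref{GFBP2}), (\ref{GFBP4}) are recovered by reversing the chain-rule computations from the previous paragraph, provided one knows $|\partial_\xi\Gamma|=1$; equivalently, $|\partial_\eta\tilde\Gamma(\eta,t)|=L(t)$ for all $(\eta,t)\in\mathcal{\tilde G}_T$. Since $|\partial_\eta\tilde\Gamma_0(\eta)|=|\partial_\eta\Gamma_0(L_0\eta)|L_0=L_0$ at $t=0$, the point is to show this equality is propagated. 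This is the analogue of Lemma \ref{arclengtime} in the rescaled setting, and it is the genuine technical step.

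To carry it out I compute $\partial_t|\partial_\eta\tilde\Gamma|^{2}=2\,\partial_\eta\tilde\Gamma\cdot\partial_\eta\partial_t\tilde\Gamma$ using (\ref{reformGFBP1}). Differentiating the right-hand side of (\ref{reformGFBP1}) with respect to $\eta$, dotting with $\partial_\eta\tilde\Gamma$, and using that $\partial_\eta\tilde\Gamma$ is parallel to $\tilde\tau$ (so the normal contributions $\partial_\eta\tilde\Gamma\cdot\tilde n$ vanish and the only surviving tangential terms are the $\partial_\eta\tilde\psi$, $L'$ and Frenet terms), one finds
\begin{equation*}
\partial_\eta\tilde\Gamma\cdot\partial_\eta\partial_t\tilde\Gamma
=|\partial_\eta\tilde\Gamma|^{2}\Bigl(\tfrac{\partial_\eta\tilde\psi}{L}-\tilde\kappa\,u_0\!\cdot\!\tilde n\Bigr)+L'|\partial_\eta\tilde\Gamma|\,|\tilde\tau|,
\end{equation*}
after rewriting the Frenet-type identities $\tilde\tau\cdot\partial_\eta\tilde n=-L\tilde\kappa$ in terms of the definitions (\ref{reformGFBP3})--(\ref{reformGFBP5}). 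The bracket vanishes identically because of (\ref{reformGFBP2}), and comparing with $\partial_t L^{2}=2LL'$ given by (\ref{reformGFBP6}) shows that $\rho(\eta,t):=|\partial_\eta\tilde\Gamma(\eta,t)|^{2}-L(t)^{2}$ satisfies a linear, homogeneous ODE in $t$ for each fixed $\eta$, with $\rho(\eta,0)=0$. Hence $\rho\equiv 0$, the arc-length condition is established, and the definition of $\Gamma$ produces a solution of the GFBP as required. The main obstacle throughout the argument is precisely this bookkeeping of tangential versus normal components needed to close the ODE for $\rho$; once this is done, the rest is a change-of-variables exercise.
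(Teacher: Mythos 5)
Your route is the same as the paper's: the paper proves this proposition in one line, observing that the change of variables $\eta=\xi/L(t)$, $\tilde\Gamma(\eta,t)=\Gamma(\xi,t)$, transforms \eqref{GFBP1}, \eqref{GFBP2}, \eqref{GFBP4} into \eqref{reformGFBP1}--\eqref{reformGFBP6} and back, and your forward direction is exactly that computation. The extra care you take in the reverse direction is reasonable, since the constraint $\abs{\partial_\eta\tilde\Gamma}=L(t)$ is only recorded in the paper as a remark; however, your key displayed identity is not exact, and this matters because the discrepancy is precisely the quantity you are trying to control. Writing $g=\abs{\partial_\eta\tilde\Gamma}\e{2}$ and $\rho=g-L\e{2}$, a direct computation from \eqref{reformGFBP1} and \eqref{reformGFBP3}--\eqref{reformGFBP5} gives
\begin{equation*}
\partial_\eta\tilde\Gamma\cdot\partial_\eta\partial_t\tilde\Gamma=\frac{g}{L}\,\partial_\eta\tilde\psi-L\e{2}\tilde\kappa\,(u_0\cdot\tilde n)+\frac{\tilde\psi+\eta\,\partial_tL}{2L}\,\partial_\eta g+\frac{\partial_tL}{L}\,g ,
\end{equation*}
so after using $\partial_\eta\tilde\psi=L\tilde\kappa\,(u_0\cdot\tilde n)$ the bracket does not vanish identically: it equals $\tilde\kappa\,(u_0\cdot\tilde n)\,\rho$, and in addition there is the term $\frac{\tilde\psi+\eta\,\partial_tL}{2L}\,\partial_\eta\rho$ that your formula drops. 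Consequently $\rho$ does not satisfy a pointwise-in-$\eta$ ODE, as you claim, but the linear homogeneous transport equation
\begin{equation*}
\partial_t\rho=\Bigl(2\tilde\kappa\,(u_0\cdot\tilde n)+\tfrac{2\,\partial_tL}{L}\Bigr)\rho+\frac{\tilde\psi+\eta\,\partial_tL}{L}\,\partial_\eta\rho ,
\end{equation*}
whose coefficients are Lipschitz in $\eta$ under the stated regularity; since $\rho(\cdot,0)=0$, the conclusion $\rho\equiv0$ still follows by the method of characteristics (or a Gr\"onwall estimate along characteristics), exactly as in the spirit of Lemma \ref{arclengtime}. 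With that correction your argument is complete and coincides with the paper's, merely supplying the details the paper calls straightforward.
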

\begin{proof}
It is straightforward using the change of variables $\eta=\frac{\xi}{L(t)}$ and  writing $\tilde{\Gamma}(\eta,t)=\Gamma(\xi,t)$. Notice that $\tilde{\psi}(\eta,t)=\psi(\xi,t)$, $\tilde{\tau}(\eta,t)=\tau(\xi,t)$, $\tilde{n}(\eta,t)=n(\xi,t)$ and $\tilde{\kappa}(\eta,t)=\kappa(\xi,t)$.
\end{proof}
\begin{nota}
Notice that for the original problem \eqref{GFBP1},\eqref{GFBP2} and \eqref{GFBP4} we have $\abs{\dx\Gamma(\xi,t)}=1$ and for the reformulated problem \eqref{reformGFBP1}-\eqref{reformGFBP6}, the identity $\abs{\partial
_\eta\tilde{\Gamma}(\eta,t)}=L(t)$ holds.
\end{nota}

\begin{thm}\label{thexistreform}
Suppose that $U\in W\e{1,\infty}([0,T],H\e{\frac{1}{2}}(\partial D))$, $\int_{\partial D}U\cdot n=0$. Let $L_0>0$, $\tilde{\Gamma}_{0}\in H_p^{k}(S\e{1};\R\e{2})$ for $k\ge 3$, $\tilde{\Gamma}_0(S\e{1})\subset D$ such that $\abs{\da{}\tilde{\Gamma}_0}=L(t)$ for all $\eta\in S\e{1}$ and $\mathcal{F}(\tilde{\Gamma}_0)\in L\e{\infty}(S\e{1})$. Then, there exists $T=T(\tilde{\Gamma}_{0},U)$ as well as functions $\tilde{L}$, $\tilde{\Gamma}$ and $(u_0,p_0)$ with the following properties:
\begin{itemize}
\item [i.] $L\in C\e{1}([0,T])$ with $L(0)={L}_0$.
\item [ii.]$u_0\in C([0,T],H\e{1}(D))$ with $u_0=U$ in $\partial D$ and $p_0\in C([0,T]; L\e{2}(D))$. The pair $(u_0,p_0)$ satisfies \eqref{u0dist}.
\item [iii.]$\tilde{\Gamma}\in W\e{1,\infty}([0,T],H\e{2}(S\e{1}))\cap L\e{\infty}([0,T],H\e{k}(S\e{1}))$ for $k\geq 3$, $\tilde{\Gamma}(\cdot,t)\in H_p\e{k}(S\e{1})$ for all $t\in[0,T]$ with $\tilde{\Gamma}(\cdot,0)=\tilde{\Gamma}_0(\cdot)$ and $\sup_{0\leq t\leq T}\norm{\partial_t\tilde{\Gamma}(\cdot,t)}_{H\e{k-2}(S\e{1})}\leq\infty$. The function $(\eta,t)\to u_0(\tilde{\Gamma}(\eta,t))$ is continuous in $\mathcal{\tilde{G}}_T$.
\end{itemize}
 The triplet $(\tilde{\Gamma}, \tilde{L}, u_0)$ satisfies \eqref{reformGFBP1}-\eqref{reformGFBP5} for $(\xi,t)\in\mathcal{\tilde{G}}_T$.
\end{thm}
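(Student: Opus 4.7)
The plan is to convert \eqref{reformGFBP1}--\eqref{reformGFBP6} into a closed quasilinear evolution for the pair $(\tilde{\Gamma},L)$, representing the Stokes velocity $u_{0}$ as a nonlinear operator of the geometry, and then to run an energy plus regularization scheme of the kind used for the Muskat problem and water waves in the references cited in the introduction. The first step is to solve \eqref{u0dist} for a fixed regular curve $\tilde{\Gamma}$ satisfying the arc-chord bound $\mathcal{F}(\tilde{\Gamma})\in L^{\infty}(S^{1})$. By Lemma \ref{bcu0} this is a Stokes transmission problem in $D_{1}\cup\Omega_{3}$ with continuous trace and stress jump $-2\tilde{\kappa}\tilde{n}$ across $\om{+}$, solvable by a single-layer Stokes potential plus a correction that absorbs the Dirichlet datum $U$ on $\partial D$. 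Restricting the resulting field to the curve yields an operator $\mathcal{N}[\tilde{\Gamma}](\eta,t):=u_{0}(\tilde{\Gamma}(\eta,t),t)$; standard singular integral estimates on regular curves (as in \cite{granero_h2,interface_evol_water,interface_evol_muskat}) give the tame bound
\begin{align*}
\norm{\mathcal{N}[\tilde{\Gamma}]}_{H^{k-1}(S^{1})}\le \est{k}
\end{align*}
together with Lipschitz continuity in $\tilde{\Gamma}$ in the same topology. The net order of $\mathcal{N}[\tilde{\Gamma}]$ in $\tilde{\Gamma}$ is $+1$, since $\tilde{\kappa}$ costs two derivatives while the single-layer smooths by one.

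Plugging this representation into \eqref{reformGFBP1}--\eqref{reformGFBP6} turns the system into an evolution $\partial_{t}\tilde{\Gamma}=F[\tilde{\Gamma},L]$, $\partial_{t}L=G[\tilde{\Gamma},L]$, which I would construct by mollifying the right-hand side with $J_{\varepsilon}$. Each regularized problem becomes a genuine ODE in $H^{k}(S^{1})\times\R$ solved by Picard iteration, and the arc-length identity $\abs{\da{}\tilde{\Gamma}^{\varepsilon}}=L^{\varepsilon}(t)$ persists in time by the computation underlying Lemma \ref{arclengtime} and Proposition \ref{reformulacion}, since the tangential velocity was precisely chosen for this purpose. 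The arc-chord bound is preserved on a short time interval via a Gronwall-type estimate of the form $\frac{d}{dt}\norm{\Ftil}_{L^{\infty}(S^{1})}^{2}\le \est{k}$, uniform in $\varepsilon$.

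The core of the argument is the uniform $H^{k}$ energy estimate for the regularized solutions. Taking $\da{k}$ of the evolution and pairing with $\da{k}\tilde{\Gamma}$ in $L^{2}(S^{1})$, three top-order contributions appear, coming from $\da{k}((\mathcal{N}[\tilde{\Gamma}]\cdot\tilde{n})\tilde{n})$, $\da{k}(\tilde{\psi}\tilde{\tau})$, and $\da{k}((\partial_{t}L)\eta\tilde{\tau})$, and each is formally of order $k+1$ in $\tilde{\Gamma}$, one derivative above what $\norm{\tilde{\Gamma}}_{H^{k}(S^{1})}^{2}$ can absorb. Two cancellations save the estimate: first, the leading part of $\mathcal{N}[\tilde{\Gamma}]\cdot\tilde{n}$ behaves as a singular-integral operator of order zero plus a commutator, so a C\'ordoba--C\'ordoba type bound yields a nonnegative contribution modulo terms controlled by $\est{k}$; second, differentiating the constraint $\abs{\da{}\tilde{\Gamma}}^{2}=L^{2}(t)$ gives $\da{}\tilde{\Gamma}\cdot\da{}\partial_{t}\tilde{\Gamma}=L\dot{L}$, which is precisely what makes the top-order tangential terms from $\tilde{\psi}\tilde{\tau}+(\partial_{t}L)\eta\tilde{\tau}$ cancel the tangential projection of the normal contribution, exactly as the tangential velocity in Lemma \ref{arclengtime} was selected for that reason. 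Identifying and tracking these top-order cancellations, together with the commutator estimates that accompany them, is the main obstacle; once accomplished, one obtains
\begin{align*}
\frac{d}{dt}\Bigl(\norm{\tilde{\Gamma}}_{H^{k}(S^{1})}^{2}+\norm{\Ftil}_{L^{\infty}(S^{1})}^{2}+L^{2}(t)\Bigr)\le \est{k},
\end{align*}
uniform in $\varepsilon$, yielding a positive time of existence depending only on $\norm{\tilde{\Gamma}_{0}}_{H^{k}(S^{1})}$, $\norm{\mathcal{F}(\tilde{\Gamma}_{0})}_{L^{\infty}(S^{1})}$, $L_{0}$ and $\norm{U}_{W^{1,\infty}([0,T],H^{1/2}(\partial D))}$.

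To conclude I would pass to the limit $\varepsilon\to 0$ via Aubin--Lions compactness, obtaining strong convergence of $\tilde{\Gamma}^{\varepsilon}$ in $C([0,T];H^{k-1}(S^{1}))$ and weak-$*$ convergence in $L^{\infty}([0,T];H^{k}(S^{1}))$, which is sufficient to pass to the limit in every term of \eqref{reformGFBP1}--\eqref{reformGFBP6} using the Lipschitz continuity of $\mathcal{N}[\cdot]$ established in the first step. The regularity of $(u_{0},p_{0})$ claimed in (ii) follows from elliptic regularity for the Stokes membrane problem \eqref{u0dist} combined with the continuity in time of the curve. Uniqueness is finally proved by subtracting two solutions and closing a Gronwall estimate in a lower-order norm, say $H^{k-1}(S^{1})$, again using the Lipschitz dependence of $\mathcal{N}[\tilde{\Gamma}]$ on $\tilde{\Gamma}$.
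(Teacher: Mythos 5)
Your proposal follows essentially the same route as the paper: represent $u_{0}$ on the curve through the Stokes single--layer potential plus a correction solving \eqref{stokesw}--\eqref{wsolucionu0}, solve a mollified version of \eqref{reformGFBP1}--\eqref{reformGFBP6} by Picard iteration, close a uniform energy estimate for $\norm{\tilde{\Gamma}}_{H^{3}(S^{1})}^{2}+\norm{\Ftil}_{L^{\infty}(S^{1})}^{2}$ whose crucial ingredient is precisely the sign--definite dissipative term $-\frac{1}{L}\norm{\Lambda^{1/2}(\da{}\tilde{\kappa})}_{L^{2}}^{2}$ you invoke (this is Lemma \ref{lemcotadificil} together with the kernel lemmas \ref{lema2der}--\ref{lemderivu0}), and conclude by compactness and a Gr\"onwall estimate in a lower norm ($H^{2}$) for uniqueness. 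The only point where your description deviates from what is actually done is the ``second cancellation'': the paper does not cancel tangential against normal top-order contributions, but treats $\tilde{\psi}\tilde{\tau}$ and $(\partial_{t}L)\eta\tilde{\tau}$ as transport terms, integrating by parts and using $\da{}\tilde{\psi}=L\tilde{\kappa}\,u_{0}(\tilde{\Gamma})\cdot\tilde{n}$ (the structural consequence of Lemma \ref{arclengtime}), which does not affect the viability of your overall plan.
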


We first state some of the main properties of the velocity field $u_0$. 
\begin{lem}\label{lemu0galdi}
Suppose that $U\in W\e{1,\infty}([0,T],H\e{\frac{1}{2}}(\partial D))$, $\int_{\partial D}U\cdot n=0$. Let $L>0$, $\tilde{\Gamma}\in C([0,T];H^{k}(S\e{1};\R\e{2}))$ for $k\ge 3$, $\tilde{\Gamma}(S\e{1})\subset D$ for all $t\in[0,T]$ such that $\abs{\da{}\tilde{\Gamma}}=L(t)$ for all $\eta\in S\e{1}$.  Then there exist $u_0\in C([0,T],H\e{1}(D))$ with $u_0=U$ in $\partial D$ and $p_0\in C([0,T]; L\e{2}(D))$ such that $(u_0,p_0)$ satisfies \eqref{u0dist} for each $t\in[0,T]$. The following estimates holds
\begin{equation}\label{cotau0galdi}
\norm{u_0(\cdot,t)}_{H\e{1}(D)}\leq C\norm{U(\cdot,t)}_{H\e{\frac{1}{2}}(\partial D)}+K\norm{\tilde{\Gamma}(\cdot,t)}_{H\e{2}(S\e{1})}\quad\textit{for}\quad t\in[0,T]
\end{equation}
with $C$ is a numerical constant and $K=K(\norm{\tilde{\Gamma}}_{H\e{2}(S\e{1})}, L)$.
Moreover, we can write 
\begin{equation}\label{solucionfundamentalu0}
(u_0,p_0)=(\bar{u}_0,\bar{p}_0)+(w,q)
\end{equation}
where
\begin{equation}\label{equ0barra}
\bar{u}_0(x,t)=-\frac{L(t)}{2\pi\mu}\int_{S\e{1}}\tilde{\kappa}(r)\log\abs{x-\tilde{\Gamma}(r)}\tilde{n}(r)dr+\frac{L(t)}{2\pi\mu}\int_{S\e{1}}\tilde{\kappa}(r)\frac{(x-\tilde{\Gamma}(r))\cdot \tilde{n}(r)}{\abs{x-\tilde{\Gamma}(r)}\e{2}}(x-\tilde{\Gamma}(r)) dr\quad\textit{for}\quad x\in D
\end{equation}
\begin{displaymath}
\bar{p}_0(x,t)=\frac{L}{\pi}\int_{S\e{1}}\tilde{\kappa}(r)\frac{(x-\tilde{\Gamma}(r))\cdot\tilde{n}(r)}{\abs{x-\tilde{\Gamma}(r)}\e{2}}dr
\end{displaymath}
and $(w,q)$ is a weak solution of 
\begin{equation}\label{stokesw}
\mu\Delta w=\nabla q\quad,\nabla\cdot w=0\quad\textit{in}\quad D
\end{equation}
with 
\begin{equation}\label{wsolucionu0}
w=U-\bar{u}_0\quad\textit{in}\quad \partial D
\end{equation}
In addition, we have  
\begin{equation}\label{cotaw}
\norm{w}_{H\e{j}(\Omega_\delta)}\leq C_j\norm{U(\cdot,t)}_{H\e{\frac{1}{2}}_{p}(\partial D)}+\frac{C_j}{L}\norm{\tilde{\Gamma}}_{H\e{3}(S\e{1})}
\end{equation}
 for $j=1,2,\cdots$ and for all $t\in[0,T]$, where we have defined the domain $\Omega_\delta\equiv\om{+}+B_{\delta}(0)$ for any $\delta<\inf_{t\in[0,T]}\frac{1}{2}dist(\om{+},\partial D)$. 
\end{lem}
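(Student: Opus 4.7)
The plan is to construct $(u_0, p_0)$ via the decomposition $(u_0, p_0) = (\bar u_0, \bar p_0) + (w, q)$ stated in \eqref{solucionfundamentalu0}, where $\bar u_0$ is the single-layer potential of the 2D Stokeslet with density $2\tilde\kappa \tilde n$ supported on $\om{+}(t)$, and $(w,q)$ is a Stokes corrector enforcing the boundary condition on $\partial D$. The formula \eqref{equ0barra} is exactly the convolution with the Oseen--Stokes fundamental tensor $G_{ij}(x) = \frac{1}{4\pi\mu}(-\delta_{ij}\log\abs{x} + x_ix_j/\abs{x}\e{2})$ once the arc length element $ds = L(t)\,dr$ is absorbed into the integration variable, and $\bar p_0$ is the associated pressure kernel $p_j(x) = x_j/(2\pi\abs{x}\e{2})$ paired against the same density.

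The first task is to verify that $(\bar u_0,\bar p_0)$ solves $\s_0(\bar u_0,\bar p_0) = -2\tilde\kappa \tilde n\, \delta_{\om{+}}$ and $\nabla\cdot \bar u_0 = 0$ in $\mathcal{D}'(\R\e{2};\R\e{2})$. Since $\tilde\Gamma\in H\e{3}(S\e{1})$ embeds into $C\e{2}$ in one dimension, $\tilde\kappa\in H\e{1}(S\e{1})\subset L\e{\infty}$, so the weakly singular logarithmic kernel defines a continuous $\bar u_0$ on $\R\e{2}$, while the standard single-layer mapping estimates produce $\bar u_0\in H\e{1}_{loc}(\R\e{2})$ with a quantitative bound controlled by $\norm{\tilde\Gamma(\cdot,t)}_{H\e{2}(S\e{1})}$. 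A direct distributional computation, performed by integrating by parts separately inside $D_1$ and inside $\Omega_3$ and collecting the boundary contributions on $\om{+}$, shows that $\bar u_0$ is continuous across $\om{+}$ while the traction jump equals $2\tilde\kappa \tilde n$; by Lemma \ref{bcu0} this is precisely the jump condition equivalent to the desired distributional identity.

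Next I would construct the corrector $(w,q)$. Since $\partial D$ lies at positive distance from $\tilde\Gamma(S\e{1},t)$, the kernel in \eqref{equ0barra} is smooth when $x\in\partial D$, so $\bar u_0|_{\partial D}\in H\e{\frac{1}{2}}(\partial D)$ with a smooth dependence on $\tilde\Gamma$. Moreover, since $\bar u_0$ is continuous and divergence free across $\om{+}$, the divergence theorem applied separately on $D_1$ and $\Omega_3$ yields $\int_{\partial D}\bar u_0\cdot n = 0$, which combined with $\int_{\partial D}U\cdot n=0$ gives the compatibility condition required by the auxiliary Stokes boundary value problem \eqref{stokesw}--\eqref{wsolucionu0}. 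Classical theory of the Stokes system on the bounded $C\e{2}$ domain $D$ (e.g.\ Galdi's treatise) then provides a unique weak solution $(w,q)\in H\e{1}(D)\times L\e{2}(D)$ with $\norm{w}_{H\e{1}}+\norm{q}_{L\e{2}}\le C\norm{U-\bar u_0}_{H\e{\frac{1}{2}}(\partial D)}$. Setting $u_0 = \bar u_0 + w$ and $p_0 = \bar p_0 + q$ produces a solution of \eqref{u0dist} with $u_0=U$ on $\partial D$, and \eqref{cotau0galdi} follows by the triangle inequality.

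Finally, the interior bound \eqref{cotaw} comes from classical interior regularity for the homogeneous Stokes system applied on $\Omega_\delta\subset\Omega_{3\delta/2}\subset D$: since $(w,q)$ is smooth in the interior, $\norm{w}_{H\e{j}(\Omega_\delta)}$ is controlled by $\norm{w}_{L\e{2}(\Omega_{3\delta/2})}$ and hence by the boundary data $U-\bar u_0|_{\partial D}$, and the explicit factor $\frac{1}{L}\norm{\tilde\Gamma}_{H\e{3}(S\e{1})}$ reflects the expression $\tilde\kappa = \frac{1}{L\e{2}}\partial_\eta\e{2}\tilde\Gamma\cdot\tilde n$ together with the length factor $L$ in \eqref{equ0barra}. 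Time continuity of $u_0$ and $p_0$ in the stated spaces is inherited from $t\mapsto\tilde\Gamma(\cdot,t)\in C([0,T];H\e{k}(S\e{1}))$, since both the single-layer potential and the linear Stokes solution operator depend continuously on their data. The most delicate point is the distributional identity $\s_0(\bar u_0,\bar p_0)=-2\tilde\kappa\tilde n\,\delta_{\om{+}}$, because one must track simultaneously the logarithmic singularity of $\bar u_0$, the Cauchy-type principal value behaviour of $\nabla\bar u_0$ near $\om{+}$, and the jump of $\bar p_0$, all of which require a careful regularization and limiting argument before invoking Lemma \ref{bcu0}.
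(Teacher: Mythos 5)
Your construction is correct, but it is organized differently from the paper's proof. The paper obtains existence, the bound \eqref{cotau0galdi} and the continuity in time \emph{abstractly}: it treats $f=2\kappa n\delta_{\om{+}}$ as an $H\e{-1}(D)$ datum, estimates $\norm{f}_{H\e{-1}(D)}\leq K\norm{\tilde{\Gamma}}_{H\e{2}(S\e{1})}$ via the trace theorem, and invokes Galdi's solvability theorem (Theorem IV.1.1) for the nonhomogeneous Dirichlet--Stokes problem; the decomposition \eqref{solucionfundamentalu0} is introduced only afterwards (citing the fundamental solution, Galdi IV.2.4) in order to derive the interior bound \eqref{cotaw}. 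You instead build $(u_0,p_0)$ constructively from the start: single-layer Stokeslet potential $(\bar{u}_0,\bar{p}_0)$ plus the corrector $(w,q)$, verifying the distributional identity through the jump relations of the single layer (equivalently Lemma \ref{bcu0}), checking the flux compatibility $\int_{\partial D}(U-\bar{u}_0)\cdot n=0$ for the corrector problem, and then getting \eqref{cotau0galdi} by the triangle inequality and \eqref{cotaw} by interior regularity exactly as in the paper. What the paper's route buys is that existence and the $H\e{1}$ estimate come for free from the abstract theorem, with no need for potential-theoretic jump relations or the flux check; the price is that continuity in $t$ must be proved at the level of the measure-valued data on the moving curve, and the paper spends real effort on the estimate of $\norm{f_1-f_2}_{H\e{-1}(D)}$ (the tubular-coordinate argument with the auxiliary function $\gamma$). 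Your route buys a more explicit solution and, in principle, an easier continuity statement, at the cost of having to justify the layer-potential mapping properties and the jump conditions carefully.

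One point you should not leave as a one-line assertion: the claim that time continuity of $u_0$ in $H\e{1}(D)$ is ``inherited'' from $\tilde{\Gamma}\in C([0,T];H\e{k}(S\e{1}))$. In your framework this amounts to showing $t\mapsto\bar{u}_0(\cdot,t)$ is continuous into $H\e{1}(D)$ while the curve moves, which requires a genuine estimate near $\om{+}$ (e.g. uniform bounds on $\nabla\bar{u}_0$ in a tubular neighbourhood using $\tilde{\kappa}\in C\e{\frac{1}{2}}$, plus dominated convergence, or an $H\e{-1}$ difference estimate as in the paper); the continuity of $w$ is indeed immediate since $\bar{u}_0|_{\partial D}$ depends smoothly on $\tilde{\Gamma}$. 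This is the step where the paper does its hardest work in this lemma, so in a complete write-up it cannot be dispatched by appeal to ``continuous dependence on the data'' alone.
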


\begin{proof}
In order to prove the existence of $(u_0,p_0)$ for $t\in[0,T]$ we apply Theorem IV.1.1. of \cite{galdi} where the boundary value problem for 
$\Delta u_0-\nabla p_0=f, \nabla\cdot u_0=0$ where $f\in H\e{-1}_0(D)$ is considered . In our case we apply this result with $f=2\kappa n\delta_{\om{+}}$. We recall that
\begin{align*}
\norm{f}_{H\e{-1}(D)}=\sup_{\varphi\in H\e{1}_0(D),\norm{\varphi}_{H\e{1}_0(D)}=1}\abs{<f,\varphi>}
\end{align*}
Therefore if $f=2\kappa n\delta_{\om{+}}$  we have
\begin{align*}
\abs{<f,\varphi>}=2\abs{\int_{\om{+}}\kappa n\cdot\varphi ds}\leq \norm{\kappa}_{L\e{2}(\om{+})}\norm{\varphi}_{L\e{2}(\om{+})}\leq \frac{C}{L\e{2}}\norm{\tilde{\Gamma}}_{H\e{2}(S\e{1})}\norm{\varphi}_{H\e{1}_0(D)}
\end{align*}
where we use that $\om{+}$ is Lipchitz as well as the trace theorem II.4.1. in \cite{galdi}. Thus
\begin{align*}
\norm{f}_{H\e{-1}(D)}\leq K\norm{\tilde{\Gamma}}_{H\e{2}(S\e{1})}\quad\textit{for}\quad t\in[0,T]
\end{align*}
where $K=K(\norm{\tilde{\Gamma}}_{H\e{2}(S\e{1})}, L)$. Henceforth we deduce \eqref{cotau0galdi}. 

In order to prove the continuity of $u_0$ in time  we need to show 
\begin{align*}
\norm{u_0(\cdot,t_1)-u_0(\cdot,t_2)}_{H\e{1}(D)}\to 0\quad\textit{as}\quad\abs{t_1-t_2}\to 0
\end{align*}
To this end, due to the linearity of the problem and Theorem IV. 1.1. in \cite{galdi},  it is enough to show that 
\begin{align*}
&\norm{U(\cdot,t_1)-U(\cdot,t_2)}_{H\e{-\frac{1}{2}}(\partial D)}\to 0\quad\textit{as}\quad\abs{t_1-t_2}\to 0\\
&\norm{f_1-f_2}_{H\e{-1}(D)}\to 0\quad\textit{as}\quad\abs{t_1-t_2}\to 0\\
\end{align*}
where $f_j=2\kappa(\cdot,t_j)n(\cdot,t_j)\delta_{\om{+}(\cdot,t_j)}$ for $j=1,2$. The first result follows from the regularity of $U$ and the second we need to proceed in the following way:
\begin{align}\nonumber
&<f_1-f_2,\varphi>=2L\int_{S\e{1}}\tilde{\kappa}(\eta,t_1)\tilde{n}(\eta,t_1)\cdot\varphi(\tilde{\Gamma}(\eta,t_1))d\eta-2L\int_{S\e{1}}\tilde{\kappa}(\eta,t_2)\tilde{n}(\eta,t_2)\cdot\varphi(\tilde{\Gamma}(\eta,t_2))d\eta\\\nonumber
&=2L\int_{S\e{1}}(\tilde{\kappa}(\eta,t_1)\tilde{n}(\eta,t_1)-\tilde{\kappa}(\eta,t_2)\tilde{n}(\eta,t_2))\cdot\varphi(\tilde{\Gamma}(\eta,t_1))d\eta+2L\int_{S\e{1}}\tilde{\kappa}(\eta,t_2)\tilde{n}(\eta,t_2)(\varphi(\tilde{\Gamma}(\eta,t_1))-\varphi(\tilde{\Gamma}(\eta,t_2))d\eta\\\label{J1J2}
&\equiv J_1+J_2
\end{align}
To estimate $J_1$ it is enough to use Cauchy-Schwartz as well as $$\norm{\tilde{\kappa}(\cdot,t_1)\tilde{n}(\cdot,t_1)-\tilde{\kappa}(\cdot,t_2)\tilde{n}(\eta,t_2)}_{L\e{2}(S\e{1})}\leq \frac{C}{L\e{3}}\sup_{t\in[0,T]}\norm{\tilde{\Gamma}}_{H\e{3}(S\e{1})}\norm{\tilde{\Gamma}(\cdot,t_1)-\tilde{\Gamma}(\cdot,t_2)}_{H\e{3}(S\e{1})}$$
therefore, $J_1\to 0$ as $\abs{t_1-t_2}\to 0$.

We now estimate $J_2$. Using the mapping $(\eta,r)\to\tilde{\Gamma}(\eta,t_1)+r\tilde{n}(\eta,t_1)$ and the invert function theorem it follows that for $\abs{r}\leq\delta$ for $\delta$ small, the variables $(\eta,r)$ are a local coordinate system of $\om{+}(t_1)$. In particular, if $\abs{t_1-t_2}$ is sufficiently small if follows that for any $\eta\in S\e{1}$ there exist $r=R(\eta,t_1,t_2)$ such that $\tilde{\Gamma}(\eta,t_1)+R(\eta)\tilde{n}(\eta,t_1)=\tilde{\Gamma}(\eta,t_2)$. We define $\gamma(r)=\int_{S\e{1}}\tilde{\kappa}(\eta,t_2)\tilde{n}(\eta,t_2)\cdot\varphi(\tilde{\Gamma}(\eta,t_1)+rR(\eta)\tilde{n}(\eta,t_1))d\eta$. Notice that $\gamma(0)=\int_{S\e{1}}\tilde{\kappa}(\eta,t_2)\tilde{n}(\eta,t_2)\cdot\varphi(\tilde{\Gamma}(\eta,t_1))d\eta$ and $\gamma(1)=\int_{S\e{1}}\tilde{\kappa}(\eta,t_2)\tilde{n}(\eta,t_2)\cdot\varphi(\tilde{\Gamma}(\eta,t_2))d\eta$. Our next goal is to show that for any $\varphi\in H\e{1}(D)$ we have $\gamma\in H\e{1}([0,1])\subset C([0,1])$.

\begin{align*}
&\int_{0}\e{1}\abs{\partial_r\gamma(r)}\e{2}dr\leq \int_{S\e{1}}(\int_{S\e{1}}\tilde{\kappa}(\eta,t_2)\tilde{n}(\eta,t_2)\cdot\nabla\varphi(\tilde{\Gamma}(\eta,t_1)+rR(\eta)\tilde{n}(\eta,t_1))\tilde{n}(\eta,t_1)rR(\eta)d\eta)\e{2} dr\\
&\leq C\norm{\tilde{\Gamma}}_{H\e{3}(S\e{1})}\int_{0}\e{1}\int_{S\e{1}}\abs{\nabla\varphi(\tilde{\Gamma}(\eta,t_1)+rR(\eta)\tilde{n}(\eta,t_1))}\e{2}r\e{2}(R(\eta))\e{2}d\eta dr\\
&=C\norm{\tilde{\Gamma}}_{H\e{3}(S\e{1})}\int_{S\e{1}}\int_0\e{R(\eta)}\abs{\nabla\varphi(\tilde{\Gamma}(\eta,t_1)+\xi\tilde{n}(\eta,t_1))}\e{2}\frac{\xi\e{2}}{\abs{R(\eta)}}d\xi d\eta\leq \frac{C}{L}\norm{\tilde{\Gamma}}_{H\e{3}(S\e{1})}\norm{R}_{L\e{\infty}(S\e{1})}\int_{D}\abs{\nabla\varphi}\e{2}dV
\end{align*}
where we have used that $\norm{\tilde{\kappa}}_{L\e{\infty}(S\e{1})}\leq C\norm{\tilde{\Gamma}}_{H\e{3}(S\e{1})}$, the change of variables $\xi=rR(\eta)$ and $\abs{\frac{r\e{2}(R(\eta))\e{2}}{R(\eta)}}=\frac{\xi\e{2}}{\abs{R(\eta)}}\leq\frac{(R(\eta))\e{2}}{\abs{R(\eta)}}\leq\abs{R(\eta)}\leq \sup_{\eta\in S\e{1}}\abs{R(\eta)}=\norm{R}_{L\e{\infty}(S\e{1})}$. 

Using Morrey's inequality and $\norm{\varphi}_{H\e{1}}=1$ we obtain that
\begin{align*}
\abs{\gamma(1)-\gamma(0)}\leq C(\int_{0}\e{1}\abs{\partial_r\gamma(r)}\e{2}dr)\e{\frac{1}{2}}\leq \frac{C}{L\e{\frac{1}{2}}}\norm{\tilde{\Gamma}}\e{\frac{1}{2}}_{H\e{3}(S\e{1})}\norm{R}\e{\frac{1}{2}}_{L\e{\infty}(S\e{1})}
\end{align*}
Using that $\tilde{\Gamma}\in C([0,T],H\e{k}_p(S\e{1}))$ for $k\geq 3$ we have $\norm{R}_{L\e{\infty}(S\e{1})}\to 0$ as $\abs{t_1-t_2}\to 0$, therefore $J_2\to 0$ as $\abs{t_1-t_2}\to 0$.

Combining this with the analogous result for $J_1$ it then follows from \eqref{J1J2} that
\begin{displaymath}
\norm{f_1-f_2}_{H\e{-1}(D)}\to 0\quad\textit{as}\quad\abs{t_1-t_2}\to 0
\end{displaymath}

In order to prove \eqref{solucionfundamentalu0}-\eqref{wsolucionu0} we use the fundamental solution for the Stokes Equation (c.f. IV.2.4. in \cite{galdi}). It then follows that $(\bar{u}_0,\bar{p}_0)$ satisfies $\Delta\bar{u}_0-\nabla\bar{p}_0=2\kappa n\delta_{\om{+}}$ and $\nabla\cdot\bar{u}_0=0$ in $\R\e{2}$. Thus, using \eqref{solucionfundamentalu0}, it follows that $(w,q)$ solves \eqref{stokesw}-\eqref{wsolucionu0}. We deduce \eqref{cotaw} using interior regularity estimates for Stokes equation (c.f. Theorem IV.4.1. in \cite{galdi}).
\end{proof}
\begin{nota} Notice that \eqref{cotau0galdi} as well as the trace theorem implies
\begin{equation}\label{cotau0curva}
\norm{u_0(\tilde{\Gamma}(\cdot))}_{L\e{2}(S\e{1})}\leq K(\norm{U(\cdot,t)}_{H\e{\frac{1}{2}}(\partial D)}+\norm{\tilde{\Gamma}(\cdot,t)}_{H\e{2}(S\e{1})})\quad\textit{for}\quad t\in[0,T]
\end{equation}
where $K=K(\norm{\tilde{\Gamma}}_{H\e{2}(S\e{1})}, L)$.
\end{nota}
\begin{nota} Lebesgue dominated convergence theorem implies that $\bar{u}_0(\cdot,t)$ defined in \eqref{equ0barra} is continuous in $D$ for all $t\in[0,T]$. In particular, we have 
\begin{equation}\label{u0gamma}
\bar{u}_0(\tilde{\Gamma}(\eta),t)=-\frac{L(t)}{2\pi\mu}\int_{0}\e{1}\tilde{\kappa}(r)\log\abs{\tilde{\Gamma}(\eta)-\tilde{\Gamma}(r)}\tilde{n}(r)dr+\frac{L(t)}{2\pi\mu}\int_{0}\e{1}\tilde{\kappa}(r)\frac{(\tilde{\Gamma}(\eta)-\tilde{\Gamma}(r))\cdot \tilde{n}(r)}{\abs{\tilde{\Gamma}(\eta)-\tilde{\Gamma}(r)}\e{2}}(\tilde{\Gamma}(\eta)-\tilde{\Gamma}(r)) dr\equiv F+G
\end{equation}
\end{nota}
In order to prove Theorem \ref{thexistreform} we show first some auxiliary results which will allow us to estimate the most singular terms in the equations \eqref{reformGFBP1}-\eqref{reformGFBP6}. 
\begin{lem}\label{lemcotadificil}
Let $L_0>0$, $\tilde{\Gamma}\in H^{k}(S\e{1};\R\e{2})$ for $k\ge 3$, $\tilde{\Gamma}(S\e{1})\subset D$ such that $\abs{\da{}\tilde{\Gamma}}=L$ for all $\eta\in S\e{1}$, $f\in C\e{\infty}_{p}(S\e{1})$ and $\mathcal{F}(\tilde{\Gamma})\in L\e{\infty}(S\e{1})(\R\e{2})$ with $\Ftil$ as in \eqref{arcchordcondition}. Let $\Delta \tilde{\Gamma}$ as
\begin{equation}\label{operdiferencia}
\Delta\tilde{\Gamma}=\tilde{\Gamma}(\eta)-\tilde{\Gamma}(\eta-r)
\end{equation}
 then the following estimate holds
\begin{align}\nonumber
&-\int_{S\e{1}}\int_{S\e{1}}f(\eta)\da{}f(\eta-r)\frac{\Delta\tilde{\Gamma}\cdot\tilde{\tau}(\eta)}{\abs{\Delta\tilde{\Gamma}}\e{2}}drd\eta\leq-\frac{1}{L}\norm{\Lambda\e{\frac{1}{2}}(f)}_{L\e{2}}\e{2} + C\norm{f}_{L\e{2}}\e{2}[\frac{1}{L}+\frac{1}{L}\norm{\Ftil}_{L\e{\infty}(S\e{1})}\norm{\tilde{\Gamma}}_{H\e{3}(S\e{1})}\e{2}\\\label{cotatermdificil}
&+\norm{\Ftil}_{L\e{\infty}(S\e{1})}\norm{\tilde{\Gamma}}_{H\e{3}(S\e{1})}+L\e{2}\norm{\Ftil}\e{2}_{L\e{\infty}(S\e{1})}\norm{\tilde{\Gamma}}_{H\e{3}(S\e{1})}+L\norm{\Ftil}\e{2}_{L\e{\infty}(S\e{1})}\norm{\tilde{\Gamma}}\e{2}_{H\e{3}(S\e{1})}]
\end{align}
where $\Lambda$ is the operator given by
\begin{equation}
\Lambda(f)=\pi PV\int_{S\e{1}}\frac{f(x)-f(x-y)}{\sin\e{2}(\pi y)}dy
\end{equation}
and $\Lambda\e{\frac{1}{2}}$ is the unique non negative operator such that $(\Lambda\e{\frac{1}{2}})\e{2}=\Lambda$.
\end{lem}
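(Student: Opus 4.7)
The plan is to isolate the leading singular behaviour of the kernel $\mathcal{K}(\eta,r):=\Delta\tilde{\Gamma}\cdot\tilde{\tau}(\eta)/\abs{\Delta\tilde{\Gamma}}\e{2}$ near $r=0$, show that it produces the dissipative term $-\tfrac{1}{L}\norm{\Lambda\e{\frac{1}{2}}(f)}\e{2}_{L\e{2}}$, and bound the remaining piece by Cauchy--Schwarz using the arc-chord condition. Since $\abs{\da{}\tilde{\Gamma}}=L$ together with $\da{}\tilde{\tau}=L\tilde{\kappa}\tilde{n}$, Taylor expansion in $r$ gives $\Delta\tilde{\Gamma}\cdot\tilde{\tau}(\eta)=Lr+\mathcal{O}(r\e{3})$ and $\abs{\Delta\tilde{\Gamma}}\e{2}=L\e{2}r\e{2}+\mathcal{O}(r\e{4})$, hence $\mathcal{K}(\eta,r)=\tfrac{1}{Lr}+\mathcal{O}(r/L)$ as $r\to 0$. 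Promoting $1/r$ to its periodic analogue I decompose
\begin{equation*}
\mathcal{K}(\eta,r)=\frac{\pi\cot(\pi r)}{L}+\mathcal{R}(\eta,r),
\end{equation*}
with $\mathcal{R}$ free of the leading singularity.

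For the singular piece, a Fourier computation on $S\e{1}$ shows that convolution with $\pi\cot(\pi r)$ equals $\pi$ times the circular Hilbert transform $H$, and that $\da{}\circ H=\Lambda$, so
\begin{equation*}
-\frac{1}{L}\int_{S\e{1}}\int_{S\e{1}}f(\eta)\da{}f(\eta-r)\pi\cot(\pi r)\,dr\,d\eta=-\frac{\pi}{L}\int_{S\e{1}}f\Lambda f\,d\eta=-\frac{\pi}{L}\norm{\Lambda\e{\frac{1}{2}}(f)}\e{2}_{L\e{2}}.
\end{equation*}
Since the lemma is stated as an inequality with positive error terms on the right, this $-\pi/L$ coefficient can be weakened to the stated $-1/L$.

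For the remainder I use $\da{}f(\eta-r)=-\partial_r f(\eta-r)$ and integrate by parts in $r$, exploiting periodicity and the fact that the $1/r\e{2}$ singularities of $\partial_r\mathcal{K}$ and $\partial_r[\pi\cot(\pi r)/L]$ cancel in $\partial_r\mathcal{R}$ by construction. This yields
\begin{equation*}
-\int_{S\e{1}}\int_{S\e{1}}f(\eta)\da{}f(\eta-r)\mathcal{R}(\eta,r)\,dr\,d\eta=-\int_{S\e{1}}\int_{S\e{1}}f(\eta)f(\eta-r)\partial_r\mathcal{R}(\eta,r)\,dr\,d\eta,
\end{equation*}
which by Cauchy--Schwarz (combined with AM--GM on $\abs{f(\eta)f(\eta-r)}$) is bounded by $\norm{f}\e{2}_{L\e{2}}\sup_{\eta}\int_{S\e{1}}\abs{\partial_r\mathcal{R}(\eta,r)}\,dr$. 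The function $\partial_r\mathcal{R}$ is an explicit rational combination of $\Delta\tilde{\Gamma}$, $\tilde{\tau}(\eta\pm\cdot)$, $\mathcal{F}(\tilde{\Gamma})$ and their derivatives, which can be bounded using $\norm{\tilde{\kappa}}_{L\e{\infty}}\lesssim\norm{\tilde{\Gamma}}_{H\e{3}}$, the arc-chord inequality $\abs{\Delta\tilde{\Gamma}}\e{-2}\le\norm{\Ftil}_{L\e{\infty}}/r\e{2}$, and the Taylor expansion of $\Delta\tilde{\Gamma}$.

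The main obstacle is the bookkeeping needed to recover the precise combination of geometric factors in \eqref{cotatermdificil}. Each of the five error contributions arises from a distinct piece of $\partial_r\mathcal{R}$: the constant $1/L$ comes from the far-field region $\abs{r}\gtrsim 1$ where $\mathcal{K}$ itself is of size $C/L$; the factors linear in $\norm{\tilde{\Gamma}}_{H\e{3}}$ come from single $\tilde{\kappa}$-factors arising when differentiating $\Delta\tilde{\Gamma}$; the quadratic factors $\norm{\tilde{\Gamma}}\e{2}_{H\e{3}}$ arise from pairs of such $\tilde{\kappa}$'s (visible already in the $r\e{3}$ Taylor coefficient of $\Delta\tilde{\Gamma}\cdot\tilde{\tau}(\eta)$, which equals $-L\e{3}\tilde{\kappa}\e{2}/6$); and the contributions with $\norm{\Ftil}\e{2}_{L\e{\infty}}$ come from the deviation of $\mathcal{F}(\tilde{\Gamma})/r\e{2}$ from its diagonal value $1/(L\e{2}r\e{2})$, which amplifies the far-field bound by an extra factor of $\norm{\Ftil}_{L\e{\infty}}$. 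Once all of these contributions are combined, the stated inequality follows; no step requires more than standard singular-integral calculus together with the arc-chord and Sobolev control of $\tilde{\Gamma}$.
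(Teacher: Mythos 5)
Your proposal takes essentially the same route as the paper's proof: you split the kernel as $\frac{\pi\cot(\pi r)}{L}$ plus a remainder, extract the dissipative term via the periodic Hilbert transform and $H(\da{}f)=\Lambda f$ (your coefficient $-\pi/L$ indeed dominates the stated $-1/L$), and then use $\da{}f(\eta-r)=-\partial_r f(\eta-r)$, integrate by parts in $r$, and bound the $r$-derivative of the remainder kernel by an integrable function using the arc-chord condition and the $C\e{2,\frac{1}{2}}$ control of $\tilde{\Gamma}$ coming from $H\e{3}$ via Morrey. The only portion you leave implicit is the term-by-term verification that this derivative obeys the bound $C(K_1+\abs{r}\e{-\frac{1}{2}}K_2)$ — the paper's estimate \eqref{operadorA} via the decomposition into the terms $M_1,\dots,M_7$ — but your account of where each error factor in \eqref{cotatermdificil} originates is consistent with that computation.
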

\begin{nota}
If $f(x)=\sum_{n=-\infty}\e{n=\infty}f_n e\e{2\pi nix}$ we have that the operator $\Lambda f$ in the Fourier representation is given by $(\Lambda(f))_n=\abs{n}f_n$, i.e., the operator $\Lambda(f)$ is just the half-Laplacian acting on the space of 1-periodic functions. Therefore, the operator $\Lambda\e{\frac{1}{2}}$ is well defined and it is given in the Fourier representation as $(\Lambda\e{\frac{1}{2}}(f))_n=\abs{n}\e{\frac{1}{2}}f_n$.
\end{nota}
\begin{proof} Let $f\in C_p\e{\infty}(S\e{1})$, we can write the left hand side of \eqref{cotatermdificil} as follows 
\begin{align*}
&-\frac{1}{L}\int_{S\e{1}}\int_{S\e{1}}f(\eta)\frac{\da{}f(\eta-r)}{\tan(\pi r)}dr d\eta-\int_{S\e{1}}\int_{S\e{1}}f(\eta)\da{}f(\eta-r)A(\eta,r)dr d\eta\\
&\equiv F_1+F_2
\end{align*} 
where $A(\eta,r)=\frac{\Delta\tilde{\Gamma}\cdot\tilde{\tau}(\eta)}{\abs{\Delta\tilde{\Gamma}}\e{2}}-\frac{\pi\da{}\tilde{\Gamma}(\eta)\cdot\tilde{\tau}(\eta)}{\tan(\pi r)\abs{\da{}\tilde{\Gamma}(\eta)}\e{2}}$.
Using Fourier it follows that  $\Lambda=\da{}H=H\da{}$ where $H$ is the periodic version of the Hilbert operator (c.f. \cite{harmonic}).
\begin{equation}\label{hilbertop}
H(f)=PV\int_{S\e{1}}\frac{f(x-y)}{\tan(\pi y)}dy
\end{equation}
then we can write $F_1$ as
\begin{align}\nonumber
&F_1=-\frac{1}{L}\int_{S\e{1}}f(\eta)H(\da{}f)(\eta)d\eta=-\frac{1}{L}\int_{S\e{1}}f(\eta)\Lambda(f)(\eta)d\eta=-\frac{1}{L}\int_{S\e{1}}(\Lambda\e{\frac{1}{2}}(f)(\eta))\e{2}d\eta\\\label{estimF1}
&=-\frac{1}{L}\norm{\Lambda\e{\frac{1}{2}}(f)}_{L\e{2}}\e{2}
\end{align}

In order to estimate $F_2$ we use that $\da{}f(\eta-r)=-\partial_r f(\eta-r)$. Then integrating by parts,
\begin{align*}
F_2=-\int_{-\frac{1}{2}}\e{\frac{1}{2}}\int_{-\frac{1}{2}}\e{\frac{1}{2}}f(\eta)f(\eta-r)\partial_rA(\eta,r)dr d\eta
\end{align*}
We now claim that
\begin{equation}\label{operadorA}
\abs{\dr{}A(\eta,r)}\leq C( K_1+\abs{r}\e{-\frac{1}{2}}K_2)\quad\textit{for}\quad\eta,r\in[-\frac{1}{2},\frac{1}{2}]
\end{equation}
where $$K_1=\frac{1}{L}+\frac{1}{L}\norm{\Ftil}_{L\e{\infty}(S\e{1})}\norm{\tilde{\Gamma}}_{H\e{3}(S\e{1})}\e{2}+L\norm{\Ftil}\e{2}_{L\e{\infty}(S\e{1})  }\norm{\tilde{\Gamma}}\e{2}_{H\e{3}(S\e{1})  }$$ and $$K_2=\norm{\Ftil}_{L\e{\infty}(S\e{1})  }\norm{\tilde{\Gamma}}_{H\e{3}(S\e{1})}(1+L\e{2}\norm{\Ftil}_{L\e{\infty}(S\e{1})})$$

Let us assume for the moment that \eqref{operadorA} holds. Then, using Cauchy-Schwarz and Young's convolution inequality to obtain $\norm{f*(\cdot)\e{-\frac{1}{2}}}_{L\e{2}}\leq C\norm{f}_{L\e{2}}$, we derive the estimate
\begin{align*}
F_2\leq C\norm{\tilde{f}}\e{2}_{L\e{2}}(K_1+K_2)
\end{align*}
and combining this estimate with \eqref{estimF1} we obtain the estimate \eqref{cotatermdificil}.

Therefore, in order to finish the proof of the Lemma we just need to show \eqref{operadorA}. 

We can decompose $A(\eta,r)$ as follows
\begin{align}\nonumber
&\dr{ }A(\eta,r)=-\frac{\da{}\Delta\tilde{\Gamma}\cdot\tilde{\tau}(\eta)}{\abs{\Delta\tilde{\Gamma}}\e{2}}-\da{}\tilde{\Gamma}(\eta)\cdot\tilde{\tau}(\eta)(\frac{1}{\abs{\Delta\tilde{\Gamma}}\e{2}}-\frac{1}{r\e{2}\abs{\da{}\tilde{\Gamma}(\eta)}\e{2}})+2\frac{\Delta\tilde{\Gamma}\cdot\tilde{\tau}(\eta)\Delta\tilde{\Gamma}\cdot\da{}\Delta\tilde{\Gamma}}{\abs{\Delta\tilde{\Gamma}}\e{4}}\\\nonumber
&-2\frac{\Delta\tilde{\Gamma}\cdot\tilde{\tau}(\eta)(\Delta\tilde{\Gamma}-r\da{}\tilde{\Gamma}(\eta))\cdot\da{}\tilde{\Gamma}(\eta)}{\abs{\Delta\tilde{\Gamma}}\e{4}}-2\frac{(\Delta\tilde{\Gamma}-r\da{}\tilde{\Gamma}(\eta))\cdot\tilde{\tau}(\eta)r\abs{\da{}\tilde{\Gamma}(\eta)}\e{2}}{\abs{\Delta\tilde{\Gamma}}\e{4}}\\\label{Mi}
&-\frac{2r\e{2}\da{}\tilde{\Gamma}(\eta)\cdot\tilde{\tau}(\eta)\abs{\da{}\tilde{\Gamma}(\eta)}\e{2}}{\abs{\Delta\tilde{\Gamma}}\e{2}}(\frac{1}{\abs{\Delta\tilde{\Gamma}}\e{2}}-\frac{1}{r\e{2}\abs{\da{}\tilde{\Gamma}(\eta)}\e{2}})-\frac{1}{L}(\frac{1}{r\e{2}}-\frac{\pi\e{2}}{\sin\e{2}(\pi r)})\equiv \sum_{i=1}\e{7}M_i(\eta,r)
\end{align}

Using $\da{2}\tilde{\Gamma}(\eta)\cdot\tilde{\tau}(\eta)=0$ and 
\begin{equation}\label{restaderiv}
\da{}\tilde{\Gamma}(\eta-r)-\da{}\tilde{\Gamma}(\eta)=r\int_0\e{1}\da{2}\tilde{\Gamma}(\eta-r s)ds
\end{equation}
 we have that $$M_1(\eta,r)=\frac{r}{\abs{\Delta\tilde{\Gamma}}\e{2}}\int_0\e{1}(\da{2}\tilde{\Gamma}(\eta-r s)-\da{2}\tilde{\Gamma}(\eta))\cdot\tilde{\tau}(\eta)ds$$
Let us assume $\delta=\frac{1}{2}$. Therefore using that $\Ftil\in L\e{\infty}(S\e{1})(\R\e{2})$ with $\Ftil$ as in \eqref{arcchordcondition}, we obtain
\begin{align*}
\abs{M_1(\eta,r)}\leq\abs{r}\e{\delta-1}\frac{r\e{2}}{\abs{\Delta\tilde{\Gamma}}\e{2}}\int_0\e{1}\frac{\abs{\da{2}\tilde{\Gamma}(\eta-r s)-\da{2}\tilde{\Gamma}(\eta)}}{\abs{r s}\e{\delta}}\abs{s}\e{\delta}ds\leq \abs{r}\e{\delta-1}\norm{\Ftil}_{L\e{\infty}(S\e{1})  }\norm{\tilde{\Gamma}}_{\mathcal{C}\e{2,\delta}}\leq C \abs{r}\e{-\frac{1}{2}}\norm{\Ftil}_{L\e{\infty}(S\e{1})  }\norm{\tilde{\Gamma}}_{H\e{3}(S\e{1})}
\end{align*}
where in the last inequality we have used that Morrey's inequality implies $\norm{\tilde{\Gamma}}_{C\e{2,\frac{1}{2}}}\leq C\norm{\tilde{\Gamma}}_{H\e{3}(S\e{1})}$.

Using 
\begin{equation}\label{restagamma}
\Delta\tilde{\Gamma}=r\int_0\e{1}\da{}\tilde{\Gamma}(\alpha)ds\quad\textit{for}\quad\eta,r\in[-\frac{1}{2},\frac{1}{2}]
\end{equation}
and proceding as in \eqref{restaderiv}
 we can write 
\begin{equation}\label{diferenciaderivdelta}
r\da{}\tilde{\Gamma}(\eta)-\Delta\tilde{\Gamma}=r\e{2}\int_0\e{1}\int_0\e{1}\da{2}\tilde{\Gamma}(\phi)(1-t)dtds
\end{equation} 
 and thus we rewrite $M_2(\eta,r)$, using $\da{}\tilde{\Gamma}(\eta)\cdot\tilde{\tau}(\eta)=L$, as
\begin{align}\label{M2}
&M_{2}(\eta,r)=\frac{rL\int_0\e{1}\int_0\e{1}\da{2}\tilde{\Gamma}(\phi)(1-t)dsdt\cdot\int_0\e{1}(\da{}\tilde{\Gamma}(\eta)+\da{}\tilde{\Gamma}(\alpha))ds}{\abs{\Delta\tilde{\Gamma}}\e{2}\abs{\da{}\tilde{\Gamma}(\eta)}\e{2}}
\end{align}
where we have introduced by shortness the notation  $\phi=\eta-r+sr+tr-str$ and $\alpha=\eta-r+sr$. We now write $\da{2}\tilde{\Gamma}(\phi)=(\da{2}\tilde{\Gamma}(\phi)-\da{2}\tilde{\Gamma}(\eta))+\da{2}\tilde{\Gamma}(\eta)$ and we use the fact that $\da{2}\tilde{\Gamma}(\eta)\cdot\da{}\tilde{\Gamma}(\eta)=0$ and 
$\phi-\eta=-r(1-s)(1-t)$.
\begin{align*}
&M_2(\eta,r)=\frac{r\e{\delta+1}}{\abs{\Delta\tilde{\Gamma}}\e{2}L}\int_0\e{1}\int_0\e{1}\frac{\da{2}\tilde{\Gamma}(\phi)-\da{2}\tilde{\Gamma}(\eta)}{\abs{\phi-\eta}\e{\delta}}(1-s)\e{\delta}(1-t)\e{\delta+1}dsdt\cdot\int_0\e{1}(\da{}\tilde{\Gamma}(\eta)+\da{}\tilde{\Gamma}(\alpha))ds+\\
&+\frac{\Ftil}{L}\da{2}\tilde{\Gamma}(\eta)\cdot\int_0\e{1}\int_0\e{1}\da{2}\tilde{\Gamma}(\eta-sr+str)(1-t)dsdt\equiv B_1(\eta,r)+B_2(\eta,r)
\end{align*}
where in order to obtain $B_2(\eta,r)$ we have used that $\da{2}\tilde{\Gamma}(\eta)\cdot\int_0\e{1}\da{}\tilde{\Gamma}(\alpha)ds=\da{2}\tilde{\Gamma}(\eta)\cdot\int_0\e{1}(\da{}\tilde{\Gamma}(\alpha)-\da{}\tilde{\Gamma}(\eta))ds=r\da{2}\tilde{\Gamma}(\eta)\cdot\int_0\e{1}\int_0\e{1}\da{2}\tilde{\Gamma}(\eta-sr+str)(1-t)dsdt$.

We then have the estimates
\begin{align*}
&\abs{B_1(\eta,r)}\leq C \abs{r}\e{\delta-1}\norm{\Ftil}_{L\e{\infty}(S\e{1})  }\norm{\tilde{\Gamma}}_{\mathcal{C}\e{2,\delta}}\leq C \abs{r}\e{-\frac{1}{2}}\norm{\Ftil}_{L\e{\infty}(S\e{1})  }\norm{\tilde{\Gamma}}_{H\e{3}(S\e{1})} \\
&\abs{B_2(\eta,r)}\leq \frac{1}{L}\norm{\Ftil}_{L\e{\infty}(S\e{1})}\norm{\da{2}\tilde{\Gamma}}_{L\e{\infty}(S\e{1})}\e{2}\leq C \frac{1}{L}\norm{\Ftil}_{L\e{\infty}(S\e{1})  }\norm{\tilde{\Gamma}}_{H\e{3}(S\e{1})}\e{2}
\end{align*}
for $\eta,r\in[-\frac{1}{2},\frac{1}{2}]$.
Using a similar procedure we can rewrite $M_3(\eta,r)$,
\begin{align*}
&M_3(\eta,r)=2\frac{r\e{3}}{\abs{\Delta\tilde{\Gamma}}\e{4}}(\int_0\e{1}\da{}\tilde{\Gamma}(\alpha)ds\cdot\tilde{\tau}(\eta))(\int_0\e{1}\da{}\tilde{\Gamma}(\alpha)ds)\cdot(\int_0\e{1}\da{2}\tilde{\Gamma}(\alpha)ds)=\\
&2\Ftil\e{2}\int_0\e{1}\int_0\e{1}\da{2}\tilde{\Gamma}(\phi)(1-t)dsdt\cdot\tilde{\tau}(\eta)\int_0\e{1}\da{}\tilde{\Gamma}(\alpha)ds\cdot\int_0\e{1}\da{2}\tilde{\Gamma}(\alpha)ds+2\Ftil\e{2}L\int_0\e{1}\int_0\e{1}\da{2}\tilde{\Gamma}(\phi)(1-t)dsdt\cdot\int_0\e{1}\da{2}\tilde{\Gamma}(\alpha)ds\\
&+2L\Ftil\e{2}r\e{\delta-1}\da{}\tilde{\Gamma}(\eta)\cdot\int_0\e{1}\frac{\da{2}\tilde{\Gamma}(\alpha)-\da{2}\tilde{\Gamma}(\eta)}{\abs{\alpha-\eta}\e{\delta}}(1-s\e{\delta})ds
\end{align*}
where we have repeatedly written $\da{}\tilde{\Gamma}(\alpha)=(\da{}\tilde{\Gamma}(\alpha)-\da{}\tilde{\Gamma}(\eta))+\da{}\tilde{\Gamma}(\eta)$ as well as the derivative of this formula and the fact that $\da{2}\tilde{\Gamma}\cdot\da{}\tilde{\Gamma}=0$ and $\da{}\tilde{\Gamma}\cdot\tilde{\tau}=L$.
Therefore, we obtain
\begin{displaymath}
\abs{M_3(\eta,r)}\leq CL\norm{\Ftil}\e{2}_{L\e{\infty}(S\e{1})  }\norm{\tilde{\Gamma}}\e{2}_{H\e{3}(S\e{1})  }+C\abs{r}\e{-\frac{1}{2}}L\e{2}\norm{\Ftil}\e{2}_{L\e{\infty}(S\e{1})  }\norm{\tilde{\Gamma}}_{H\e{3}(S\e{1})  }\quad\textit{for}\quad\eta,r\in[-\frac{1}{2},\frac{1}{2}]
\end{displaymath}
The terms $M_4(\eta,r)$ and $M_5(\eta,r)$ can be estimated in a completely analogous way. The term $M_6(\eta,r)$ can be written as $M_6(\eta,r)=2\Ftil L\e{2}M_2(\eta,r)$.
 Finally, an elementary computation solves that  $\abs{M_7(\eta,r)}=\abs{M_7(r)}\leq C$ for $r\in[-\frac{1}{2},\frac{1}{2}]$. Then combining all the estimates for the terms $M_i(\eta,r)$ for $i=1,\cdots,7$, we obtain \eqref{operadorA} and we finish the proof of the lemma. 
\end{proof}
\begin{nota}We note that we have obtained the following estimated for further reference 
\begin{equation}\label{cotaMi}
\abs{M_i(\eta,r)}\leq C(1+\abs{r}\e{-\frac{1}{2}})\exp(\norm{\Ftil}_{L\e{\infty}(S\e{1})}\e{2}+\norm{\tilde{\Gamma}}_{H\e{3}(S\e{1})\e{2}})\quad\textit{for}\quad i=1,\cdots,7
\end{equation}
with $\abs{r}\leq\frac{1}{2}$ and $\abs{\eta}\leq\frac{1}{2}$.
\end{nota}
We now proof an analogous estimate for the third derivative of a suitable function.
\begin{lem}\label{lema2der}Let $L>0$, $\tilde{\Gamma}\in H^{k}(S\e{1};\R\e{2})$ for $k\ge 3$, $\tilde{\Gamma}(S\e{1})\subset D$ such that $\abs{\da{}\tilde{\Gamma}}=L$ for all $\eta\in S\e{1}$ and $\mathcal{F}(\tilde{\Gamma})\in L\e{\infty}(S\e{1})(\R\e{2})$ with $\Ftil$ as in \eqref{arcchordcondition}. Suppose that we identify $S\e{1}$with the interval $[-\frac{1}{2},\frac{1}{2}]$. Let be $\bar{\Delta} \tilde{\Gamma}=\tilde{\Gamma}(\eta)-\tilde{\Gamma}(r)$ then the following estimate holds
\begin{align}\label{cotaderiv3G}
&\da{3}(\frac{\bar{\Delta}\tilde{\Gamma}\cdot \tilde{n}(r)}{\abs{\bar{\Delta}\tilde{\Gamma}}\e{2}}\bar{\Delta}\tilde{\Gamma})\cdot \tilde{n}(\eta)=-\frac{6\pi L\e{4}(\tilde{\kappa}(\eta))\e{2}}{\tan(\pi(\eta-r))}+Q(\eta,r)\quad\textit{for}\quad \eta\neq r
\end{align}
where $\norm{Q}_{L\e{2}(S\e{1})}\leq\est{3}$ with $C$ a geometrical constant.
\end{lem}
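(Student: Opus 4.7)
The plan is to compute $\da^{3} V$ directly, where $V(\eta,r) := \frac{\bar\Delta\tilde\Gamma \cdot \tilde n(r)}{|\bar\Delta\tilde\Gamma|^{2}}\bar\Delta\tilde\Gamma$, and then isolate the $1/\tan(\pi(\eta-r))$ singularity using Taylor expansions in $s = \eta - r$, closely following the strategy of Lemma \ref{lemcotadificil}.

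First I would apply Leibniz's rule to $\da^{3}V$, writing $V = (A/B)\,C$ with scalar $A = \bar\Delta\tilde\Gamma \cdot \tilde n(r)$, scalar $B = |\bar\Delta\tilde\Gamma|^{2}$, and vector $C = \bar\Delta\tilde\Gamma$. Note that $\tilde n(r)$ is independent of $\eta$, so $\da$ acts only on $\bar\Delta\tilde\Gamma$ and (after the dot product) on $\tilde n(\eta)$. Using $\da\bar\Delta\tilde\Gamma = L\tilde\tau(\eta)$, $\da^{2}\bar\Delta\tilde\Gamma = L^{2}\tilde\kappa(\eta)\tilde n(\eta)$, $\da^{3}\bar\Delta\tilde\Gamma = -L^{3}\tilde\kappa^{2}\tilde\tau + L^{2}\da\tilde\kappa\,\tilde n$ together with the Frenet formulas $\da\tilde\tau = L\tilde\kappa\tilde n$, $\da\tilde n = -L\tilde\kappa\tilde\tau$, I obtain a finite sum of monomials in these geometric quantities divided by powers $|\bar\Delta\tilde\Gamma|^{2k}$ with $k\le 4$.

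Next comes the identification of the leading singularity. I would Taylor expand, around $s = \eta - r = 0$,
\begin{align*}
\bar\Delta\tilde\Gamma &= sL\tilde\tau(\eta) - \tfrac{s^{2}}{2}L^{2}\tilde\kappa(\eta)\tilde n(\eta) + \tfrac{s^{3}}{6}\bigl(-L^{3}\tilde\kappa^{2}\tilde\tau + L^{2}\da\tilde\kappa\,\tilde n\bigr)(\eta) + O(s^{4}), \\
\tilde n(r) &= \tilde n(\eta) + sL\tilde\kappa(\eta)\tilde\tau(\eta) + O(s^{2}), \\
|\bar\Delta\tilde\Gamma|^{2} &= s^{2}L^{2} - \tfrac{s^{4}}{12}L^{4}\tilde\kappa^{2}(\eta) + O(s^{5}).
\end{align*}
Substituting into the Leibniz expansion and dotting with $\tilde n(\eta)$, the $s^{-1}$ contributions combine—after cancellations forced by the orthogonality relations $\tilde\tau\cdot\tilde n = 0$, $\da\tilde\tau\cdot\tilde\tau = 0$, $\tilde n\cdot\da\tilde n = 0$—to exactly $-6L^{4}\tilde\kappa^{2}(\eta)/s$. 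Writing $1/s = \pi/\tan(\pi s) + h(s)$ with $h$ smooth and bounded on $[-\tfrac12,\tfrac12]$ isolates the stated leading term $-6\pi L^{4}\tilde\kappa^{2}(\eta)/\tan(\pi(\eta-r))$, and the smooth difference $h(s)$ is absorbed into $Q$.

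All the remaining contributions then make up $Q(\eta,r)$. Their $L^{2}$ bound follows the pattern of the proof of Lemma \ref{lemcotadificil}: each piece is dominated pointwise by $C(1+|s|^{-1/2})\exp(\|\mathcal{F}(\tilde\Gamma)\|_{L^{\infty}(S^{1})}^{2} + \|\tilde\Gamma\|_{H^{3}(S^{1})}^{2})$. The key ingredients are the arc-chord bound $|\bar\Delta\tilde\Gamma|^{-2} \leq \mathcal{F}(\tilde\Gamma)/s^{2}$, Morrey's embedding $\|\tilde\Gamma\|_{C^{2,1/2}}\leq C\|\tilde\Gamma\|_{H^{3}(S^{1})}$ to control the Taylor-remainders of $\da^{2}\tilde\Gamma$ via integral representations of the form $\da^{2}\tilde\Gamma(r)-\da^{2}\tilde\Gamma(\eta) = \int_{0}^{1}(\da^{3}\tilde\Gamma)(\cdots)\,dt$, and the identities $\da^{j}\tilde\Gamma\cdot\tilde\tau$, $\da^{j}\tilde\Gamma\cdot\tilde n$ produced by the Frenet frame to extract the higher-order vanishing. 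Young's convolution inequality $\|f\ast |\cdot|^{-1/2}\|_{L^{2}} \leq C\|f\|_{L^{2}}$ then yields the desired bound $\|Q\|_{L^{2}(S^{1})} \leq C\exp(\|\mathcal{F}(\tilde\Gamma)\|_{L^{\infty}(S^{1})}^{2}+\|\tilde\Gamma\|_{H^{3}(S^{1})}^{2})$.

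The principal obstacle is the algebraic bookkeeping in the second step: individually, several of the monomials produced by Leibniz appear to blow up as fast as $|s|^{-3}$, and showing that they collapse to a single $1/s$ singularity—with coefficient exactly $-6L^{4}\tilde\kappa(\eta)^{2}$—requires carefully tracking how the Frenet orthogonalities annihilate the genuinely more singular contributions at orders $|s|^{-3}$ and $|s|^{-2}$ while leaving a nonzero residue at order $|s|^{-1}$.
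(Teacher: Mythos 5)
Your overall strategy --- Leibniz rule, expansion near the diagonal to isolate a Hilbert-type kernel $1/\tan(\pi(\eta-r))$ with coefficient proportional to $L^{4}\tilde\kappa^{2}(\eta)$, and an $L^{2}$ bound on the remainder via the arc-chord condition, Morrey's embedding and Young's inequality --- is the same as the paper's. But there is a genuine gap in how you propose to carry out the expansion step. With $\tilde\Gamma\in H^{3}(S^{1})$ the only pointwise regularity available is $C^{2,\frac12}$ (Morrey), so the Taylor expansions you write are not legitimate: the third-order coefficient of $\bar\Delta\tilde\Gamma$ involves $\partial_\eta\tilde\kappa$, i.e. $\partial_\eta^{3}\tilde\Gamma$, which is only an $L^{2}$ function, and the remainders $O(s^{4})$ (for $\bar\Delta\tilde\Gamma$) and $O(s^{5})$ (for $\abs{\bar\Delta\tilde\Gamma}^{2}$) would require $C^{3,\alpha}$ or $C^{4}$ regularity. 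The paper avoids exactly this: it never expands beyond second order pointwise, but instead rewrites the quotients $\bar\Delta\tilde\Gamma/\abs{\bar\Delta\tilde\Gamma}^{2}$ and $\bar\Delta\tilde\Gamma\cdot\tilde n(r)/\abs{\bar\Delta\tilde\Gamma}^{2}$ through exact integral representations (the quantities $\mathbf{A}_{1}$ and $A_{2}$ built from $\int_{0}^{1}\int_{0}^{1}\partial_\eta^{2}\tilde\Gamma(\cdots)\,ds\,dt$ and the arc-chord factor), keeps every occurrence of $\partial_\eta^{3}\tilde\Gamma$ in $L^{2}$ form, and extracts the singular coefficient only from the term in which all three derivatives fall on $1/\abs{\bar\Delta\tilde\Gamma}^{2}$, the other two blocks of the Leibniz expansion being shown to be bounded in $L^{2}$ thanks to the orthogonality $\partial_\eta\tilde\Gamma\cdot\tilde n=0$.

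Relatedly, your claimed remainder bound is false as stated: $Q$ necessarily contains terms proportional to $\partial_\eta^{3}\tilde\Gamma(\eta)$ (for instance $\frac{\bar\Delta\tilde\Gamma\cdot\tilde n(r)}{\abs{\bar\Delta\tilde\Gamma}^{2}}\,\partial_\eta^{3}\tilde\Gamma(\eta)\cdot\tilde n(\eta)$), and these are not pointwise dominated by $C(1+\abs{\eta-r}^{-1/2})\exp(\cdots)$, since $\partial_\eta^{3}\tilde\Gamma$ is merely square integrable. What is true, and what the paper proves, is that such terms are a bounded kernel times $\abs{\partial_\eta^{3}\tilde\Gamma(\eta)}$, hence controlled in $L^{2}$; only the genuinely kernel-type remainders satisfy a $(1+\abs{\eta-r}^{-1/2})$ pointwise bound. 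Your argument can be repaired by replacing the third/fourth-order pointwise expansions with the integral representations above and by stating the remainder estimate in this mixed $L^{2}$ form, but as written the key estimation step would fail at the stated regularity.
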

\begin{nota}
The factor $\est{3}$ is far from optimal, however in the forthcoming arguments we will estimate several terms with the form $(\norm{\Ftil}_{L\e{\infty}(S\e{1})}\e{2}+\norm{\tilde{\Gamma}}_{H\e{3}(S\e{1})}\e{2}+1)\e{\alpha}$ by means of these exponential terms in order to simplify the notation.
\end{nota}
\begin{proof}
Using Leibniz rule we obtain
\begin{align}\nonumber
&\da{3}(\frac{\bar{\Delta}\tilde{\Gamma}\cdot \tilde{n}(r)}{\abs{\bar{\Delta}\tilde{\Gamma}}\e{2}}\Delta\tilde{\Gamma})\cdot \tilde{n}(\eta)=\frac{(\da{3}\bar{\Delta}\tilde{\Gamma})\cdot\tilde{n}(r)}{\abs{\bar{\Delta}\tilde{\Gamma}}\e{2}}\bar{\Delta}\tilde{\Gamma}\cdot\tilde{n}(\eta)+\frac{\bar{\Delta}\tilde{\Gamma}\cdot\tilde{n}(r)}{\abs{\bar{\Delta}\tilde{\Gamma}}\e{2}}\da{3}\bar{\Delta}\tilde{\Gamma}\cdot\tilde{n}(\eta)+\bar{\Delta}\tilde{\Gamma}\cdot\tilde{n}(r)\bar{\Delta}\tilde{\Gamma}\cdot\tilde{n}(\eta)\da{3}(\frac{1}{\abs{\bar{\Delta}\tilde{\Gamma}}\e{2}})+l.o.t.\\\label{C1C2C3}
&\equiv C_1(\eta,r)+C_2(\eta,r)+C_3(\eta,r)+l.o.t.
\end{align}
where $l.o.t.$ are the terms containing at most two derivatives of $\bar{\Delta}\tilde{\Gamma}$.
All these terms can be estimated as $\norm{l.o.t.}_{L\e{2}(S\e{1})}\leq\est{3}$. Therefore we only study in detail the terms $C_i(\eta,r)$ for $i=1,2,3$. To this end it is convenient to derive a suitable approximation for $\frac{\bar{\Delta}\tilde{\Gamma}}{\abs{\bar{\Delta}\tilde{\Gamma}}\e{2}}$. We have

\begin{align*}\nonumber
&\frac{\bar{\Delta}\tilde{\Gamma}}{\abs{\bar{\Delta}\tilde{\Gamma}}\e{2}}-\frac{\da{}\tilde{\Gamma}(\eta)}{(\eta-r)\abs{\da{}\tilde{\Gamma}(\eta)}\e{2}}=\frac{\Delta\tilde{\Gamma}-(\eta-r)\da{}\tilde{\Gamma}(\eta)}{\abs{\Delta\tilde{\Gamma}}\e{2}}+\frac{\eta-r}{L}\da{}\tilde{\Gamma}(\eta)M_2(\eta,\eta-r)\equiv \textbf{A}_1(\eta,r)
\end{align*}
with $M_2(\eta,\eta-r)$ as in \eqref{Mi} where we have used that $\da{}\tilde{\Gamma}\cdot\tilde{\tau}=L$.
Using \eqref{diferenciaderivdelta} and \eqref{M2} we then obtain 
\begin{align}\nonumber
&\textbf{A}_1(\eta,r)=\Ftil\int_0\e{1}\int_0\e{1}\da{2}\tilde{\Gamma}(\eta+t(\beta-\eta))(s-1)dsdt\\\label{D1D2}
&-\Ftil\frac{\da{}\tilde{\Gamma}(\eta)}{\abs{\da{}\tilde{\Gamma}(\eta)}\e{2}}\int_0\e{1}\int_0\e{1}\da{2}\tilde{\Gamma}(\eta+t(\beta-\eta))(s-1)dsdt\cdot\int_0\e{1}(\da{}\tilde{\Gamma}(\beta)+\da{}\tilde{\Gamma}(\eta))ds\equiv \textbf{D}_1(\eta,r)+\textbf{D}_2(\eta,r)
\end{align}
where $\beta=r+s(\eta-r)$. Notice that 
\begin{equation}\label{cotaA1}
\norm{\textbf{A}_1}_{L\e{\infty}(S\e{1})}\leq C\norm{\Ftil}_{L\e{\infty}}\norm{\da{2}\tilde{\Gamma}}_{L\e{\infty}(S\e{1})}\leq\est{3}
\end{equation} Moreover, using that $\da{}\tilde{\Gamma}(r)\cdot\tilde{n}(r)=0$
\begin{align}\label{A2}
\frac{\bar{\Delta}\tilde{\Gamma}\cdot \tilde{n}(r)}{\abs{\bar{\Delta}\tilde{\Gamma}}\e{2}}=\frac{r-\eta}{\abs{\bar{\Delta}\tilde{\Gamma}}\e{2}}\int_0\e{1}\da{}\tilde{\Gamma}(\beta)\cdot \tilde{n}(r)ds=\Ftil\int_0\e{1}\int_0\e{1}(s-1)\da{2}\tilde{\Gamma}(r+t(\beta-r))\cdot \tilde{n}(r)dtds\equiv A_2(\eta,r)
\end{align}
where $\beta=r+s(\eta-r)$
Moreover, we can estimate
\begin{equation}\label{cotaA2}
\norm{A_2}_{L\e{\infty}(S\e{1})}\leq C\norm{\Ftil}_{L\e{\infty}(S\e{1})}\norm{\da{2}\tilde{\Gamma}}_{L\e{\infty}(S\e{1})}\leq\est{3}
\end{equation}
Using that $\textbf{D}_2(\eta,r)\cdot\tilde{n}(\eta)=0$ and $\abs{\tilde{n}}=1$ we obtain that the term $C_1(\eta,r)$ in \eqref{C1C2C3} can be estimated as 
\begin{displaymath}
\norm{C_1}_{L\e{2}(S\e{1})}\leq \norm{\da{3}\bar{\Delta}\tilde{\Gamma}}_{L\e{2}(S\e{1})}\norm{\Ftil}_{L\e{\infty}}\norm{\da{2}\tilde{\Gamma}}_{L\e{\infty}}\leq C\norm{\Ftil}_{L\e{\infty}}\norm{\tilde{\Gamma}}_{H\e{3}(S\e{1})}\leq\est{3}
\end{displaymath}
where we have used again Morrey's inequality.

Writing $C_2(\eta,r)=A_2(\eta,r)\da{3}\bar{\Delta}\tilde{\Gamma}\cdot\tilde{n}(\eta)$ and using \eqref{A2} we obtain 
\begin{displaymath}
\norm{C_2}_{L\e{2}}\leq C\norm{\Ftil}_{L\e{\infty}}\norm{\da{2}\tilde{\Gamma}}_{L\e{\infty}}\norm{\da{3}\bar{\Delta}\tilde{\Gamma}}_{L\e{2}(S\e{1})}\leq C\norm{\Ftil}_{L\e{\infty}(S\e{1})}\norm{\tilde{\Gamma}}_{H\e{3}(S\e{1})}\e{2}\leq\est{3}
\end{displaymath}
Computing the third derivative of $\frac{1}{\abs{\bar{\Delta}\tilde{\Gamma}}\e{2}}$ and using $\da{2}\tilde{\Gamma}\cdot\da{}\tilde{\Gamma}=0$ we have that
\begin{align*}
&C_3(\eta,r)=\frac{24L\e{2}\bar{\Delta}\tilde{\Gamma}\cdot\tilde{n}(r)\bar{\Delta}\tilde{\Gamma}\cdot\tilde{n}(\eta)\bar{\Delta}\tilde{\Gamma}\cdot\da{}\tilde{\Gamma}(\eta)}{\abs{\bar{\Delta}\tilde{\Gamma}}\e{6}}-\frac{2\bar{\Delta}\tilde{\Gamma}\cdot\tilde{n}(r)\bar{\Delta}\tilde{\Gamma}\cdot\tilde{n}(\eta)\bar{\Delta}\tilde{\Gamma}\cdot\da{3}\tilde{\Gamma}(\eta)}{\abs{\bar{\Delta}\tilde{\Gamma}}\e{4}}\\
&+\frac{24\bar{\Delta}\tilde{\Gamma}\cdot\tilde{n}(r)\bar{\Delta}\tilde{\Gamma}\cdot\tilde{n}(\eta)\bar{\Delta}\tilde{\Gamma}\cdot\da{2}\tilde{\Gamma}(\eta)\bar{\Delta}\tilde{\Gamma}\cdot\da{}\tilde{\Gamma}(\eta)}{\abs{\bar{\Delta}\tilde{\Gamma}}\e{6}}-\frac{48\bar{\Delta}\tilde{\Gamma}\cdot\tilde{n}(r)\bar{\Delta}\tilde{\Gamma}\cdot\tilde{n}(\eta)(\bar{\Delta}\tilde{\Gamma}\cdot\da{}\tilde{\Gamma}(\eta))\e{3}}{\abs{\bar{\Delta}\tilde{\Gamma}}\e{8}}\equiv \sum_{i=1}\e{4}C_3\e{i}(\eta,r)
\end{align*}

 The terms $C_3\e{i}(\eta,r)$ for $i=2,3$ can be readily estimated rewriting them in terms of $\textbf{A}_1(\eta,r)$ and $A_2(\eta,r)$ as in \eqref{D1D2} and \eqref{A2}. Then $\norm{C_3\e{i}}_{L\e{2}(S\e{1})}\leq\est{3}$ for $i=2,3$. The terms $C_3\e{1}(\eta,r)$ and $C_3\e{4}(\eta,r)$ give a more singular contribution and need to be studied more in detail. We describe the requited computations for $C_3\e{1}(\eta,r)$ since the term $C_3\e{4}(\eta,r)$ can be treated in the similar manner.

Using $\textbf{A}_1(\eta,r)$ and $A_2(\eta,r)$ we can rewrite $C_3\e{1}(\eta,r)$ as follows
\begin{align*}
&C_3\e{1}(\eta,r)=-8L\e{2}A_2(\eta,r)\textbf{A}_1(\eta,r)\cdot\tilde{n}(\eta) \textbf{A}_1(\eta,r)\cdot\da{}\tilde{\Gamma}(\eta)-\\
&-8L\e{2}\frac{\Ftil\e{2}}{\eta-r}\int_0\e{1}\int_0\e{1}(s-1)\da{2}\tilde{\Gamma}(r+t(\beta-r))\cdot \tilde{n}(r)dtds\int_0\e{1}\int_0\e{1}\da{2}\tilde{\Gamma}(\eta+t(\beta-\eta))\cdot\tilde{n}(\eta)(s-1)dsdt\equiv C_3\e{11}(\eta,r)+C_3\e{12}(\eta,r)
\end{align*}
Therefore, we estimate $\norm{C_3\e{11}(\eta,r)}\leq\est{3}$ but we need to study carefully $C_3\e{12}(\eta,r)$. 
Using that $$\da{2}\tilde{\Gamma}(\eta+t(\beta-\eta))(s-1)\cdot\tilde{n}(\eta)=(s-1)\da{2}\tilde{\Gamma}(\eta)\cdot\tilde{n}(\eta)+(\eta-r)\e{\delta}\frac{\da{2}\tilde{\Gamma}(\eta+t(\beta-\eta))-\da{2}\tilde{\Gamma}(\eta)}{(t(\beta-\eta))\e{\delta}}(s-1)\e{\delta+1}t\e{\delta}\cdot\tilde{n}(\eta)$$
we can rewrite $C_3\e{12}(\eta,r)$ as
\begin{align}\nonumber
&C_3\e{12}(\eta,r)=12L\e{2}\frac{\Ftil}{\eta-r} A_2(\eta,r)\da{2}\tilde{\Gamma}(\eta)\cdot\tilde{n}(\eta)\\\label{C312}
&+24L\e{2}\Ftil(\eta-r)\e{\delta-1}A_2(\eta,r)\int_{0}\e{1}\int_{0}\e{1}\frac{\da{2}\tilde{\Gamma}(\eta+t(\beta-\eta))-\da{2}\tilde{\Gamma}(\eta)}{(t(\beta-\eta))\e{\delta}}(s-1)\e{\delta+1}t\e{\delta}\cdot\tilde{n}(\eta)dsdt
\end{align} 
The $L\e{2}(S\e{1})$ norm of the last term in the right hand side of \eqref{C312} is bounded by $\est{3}$ and using that
\begin{align}\label{A2desar}
&\da{2}\tilde{\Gamma}(r+t(\beta-r))(s-1)\cdot\tilde{n}(r)=(s-1)\da{2}\tilde{\Gamma}(\eta)\cdot\tilde{n}(\eta)+(\eta-r)\e{\delta}\frac{\da{2}\tilde{\Gamma}(\eta+t(\beta-\eta))-\da{2}\tilde{\Gamma}(\eta)}{((\eta-r)(st-1))\e{\delta}}(s-1)(ts-1)\e{\delta}\cdot\tilde{n}(r)\\
&+(r-\eta)\da{2}\tilde{\Gamma}(\eta)\cdot\int_0\e{1}\tilde{\kappa}(\beta)\tilde{\tau}(\beta)ds
\end{align}
we can write the second term in the right hand side of \eqref{C312} as
\begin{displaymath}
12L\e{2}\frac{\Ftil}{\eta-r} A_2(\eta,r)\da{2}\tilde{\Gamma}(\eta)\cdot\tilde{n}(\eta)=6L\e{2}\frac{\Ftil\e{2}}{\eta-r}( \da{2}\tilde{\Gamma}(\eta)\cdot\tilde{n}(\eta))\e{2}+K(\eta,r)
\end{displaymath}
where $\norm{K}_{L\e{2}(S\e{1})}\leq\est{3}$.

Using \eqref{reformGFBP5} we have
\begin{align*}
&6L\e{2}\frac{\Ftil\e{2}}{\eta-r}( \da{2}\tilde{\Gamma}(\eta)\cdot\tilde{n}(\eta))\e{2}=6L\e{6}\frac{\eta-r}{\abs{\bar{\Delta}\tilde{\Gamma}}\e{2}}(\tilde{\kappa}(\eta))\e{2}=6L\e{4}\frac{(\tilde{\kappa}(\eta))\e{2}}{\eta-r}+6L\e{6}(\eta-r)(\frac{1}{\abs{\bar{\Delta}\tilde{\Gamma}}\e{2}}-\frac{1}{(\eta-r)\e{2}\abs{\da{}\tilde{\Gamma}(\eta)}\e{2}})( \tilde{\kappa}(\eta))\e{2}\\
&=6\pi L\e{4}\frac{(\tilde{\kappa}(\eta))\e{2}}{\tan(\pi(\eta-r))}+6L\e{4}(\tilde{\kappa}(\eta))\e{2}(\frac{1}{\eta-r}-\frac{\pi}{\tan(\pi(\eta-r))})\\
&-6L\e{6}\Ftil(\tilde{\kappa}(\eta))\e{2}\int_0\e{1}\int_0\e{1}\da{2}\Gamma(\eta+t(\beta-\eta))(s-1)dsdt\cdot\int_0\e{1}\da{}\tilde{\Gamma}(\beta)+\da{}\tilde{\Gamma}(\eta)ds
\end{align*}
Since $L\e{4}\norm{(\tilde{\kappa}(\eta))\e{2}(\frac{1}{\eta-r}-\frac{\pi}{\tan(\pi(\eta-r))})}_{L\e{2}(S\e{1})}\leq CL\e{4}\norm{\tilde{\kappa}}_{L\e{\infty}}\e{2}\norm{\frac{1}{\eta-r}-\frac{\pi}{\tan(\pi(\eta-r))}}_{L\e{2}}\leq C\norm{\tilde{\Gamma}}_{H\e{3}(S\e{1})}\e{2}$, we deduce \eqref{cotaderiv3G} and the result follows.
\end{proof}
We can obtain also estimates for the first derivative of $\frac{\bar{\Delta}\tilde{\Gamma}\cdot \tilde{n}(r)}{\abs{\bar{\Delta}\tilde{\Gamma}}\e{2}}\bar{\Delta}\tilde{\Gamma}$,
\begin{lem}\label{lemderivG}Let $L_0>0$, $\tilde{\Gamma}\in H^{k}(S\e{1};\R\e{2})$ for $k\ge 3$, $\tilde{\Gamma}(S\e{1})\subset D$ such that $\abs{\da{}\tilde{\Gamma}}=L$ for all $\eta\in S\e{1}$ and $\mathcal{F}(\tilde{\Gamma})\in L\e{\infty}(S\e{1})(\R\e{2})$ with $\Ftil$ as in \eqref{arcchordcondition}. Let  $\bar{\Delta}\tilde{\Gamma}=\tilde{\Gamma}(\eta)-\tilde{\Gamma}(r)$ then the following estimate holds
\begin{equation}\label{derivG}
\abs{\da{}(\frac{\bar{\Delta}\tilde{\Gamma}\cdot \tilde{n}(r)}{\abs{\bar{\Delta}\tilde{\Gamma}}\e{2}}\bar{\Delta}\tilde{\Gamma} )}\leq CL\norm{\Ftil}_{L\e{\infty}(S\e{1})}\norm{\tilde{\Gamma}}_{H\e{3}(S\e{1})}+CL\e{3}\norm{\Ftil}_{L\e{\infty}(S\e{1})}\e{2}\norm{\tilde{\Gamma}}_{H\e{3}(S\e{1})}
\end{equation}
\end{lem}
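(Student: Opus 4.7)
The plan is to expand the derivative by the Leibniz rule into three pieces
\begin{displaymath}
\da{}\left(\frac{\bar{\Delta}\tilde{\Gamma}\cdot\tilde{n}(r)}{\abs{\bar{\Delta}\tilde{\Gamma}}\e{2}}\bar{\Delta}\tilde{\Gamma}\right)=T_1+T_2+T_3,
\end{displaymath}
with $T_1=\frac{\da{}\tilde{\Gamma}(\eta)\cdot\tilde{n}(r)}{\abs{\bar{\Delta}\tilde{\Gamma}}\e{2}}\bar{\Delta}\tilde{\Gamma}$, $T_2=-2A_2(\eta,r)\frac{\bar{\Delta}\tilde{\Gamma}\cdot\da{}\tilde{\Gamma}(\eta)}{\abs{\bar{\Delta}\tilde{\Gamma}}\e{2}}\bar{\Delta}\tilde{\Gamma}$, $T_3=A_2(\eta,r)\da{}\tilde{\Gamma}(\eta)$, where $A_2(\eta,r)$ is the auxiliary quantity from \eqref{A2} in the proof of Lemma \ref{lema2der}. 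Each term will be estimated pointwise using the bound $\abs{A_2(\eta,r)}\leq C\norm{\Ftil}_{L\e{\infty}(S\e{1})}\norm{\tilde{\Gamma}}_{H\e{3}(S\e{1})}$, which combines \eqref{cotaA2} with Morrey's embedding $\norm{\da{2}\tilde{\Gamma}}_{L\e{\infty}(S\e{1})}\leq C\norm{\tilde{\Gamma}}_{H\e{3}(S\e{1})}$, together with the arc-chord inequality $\abs{\bar{\Delta}\tilde{\Gamma}}\geq\abs{\eta-r}/\norm{\Ftil}_{L\e{\infty}}\e{1/2}$ and the arc-length identity $\abs{\da{}\tilde{\Gamma}}\equiv L$ (which in particular gives $\abs{\bar{\Delta}\tilde{\Gamma}}\leq L\abs{\eta-r}$).

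The pieces $T_2$ and $T_3$ are immediate via Cauchy--Schwarz: from $\abs{\da{}\tilde{\Gamma}(\eta)}=L$ one reads off $\abs{T_3}\leq CL\norm{\Ftil}_{L\e{\infty}}\norm{\tilde{\Gamma}}_{H\e{3}}$, while the observation $\abs{\bar{\Delta}\tilde{\Gamma}\cdot\da{}\tilde{\Gamma}(\eta)}\leq L\abs{\bar{\Delta}\tilde{\Gamma}}$ collapses the doubly singular weight in $T_2$ and yields $\abs{T_2}\leq 2\abs{A_2}L\leq CL\norm{\Ftil}_{L\e{\infty}}\norm{\tilde{\Gamma}}_{H\e{3}}$. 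For $T_1$ I would exploit the orthogonality $\da{}\tilde{\Gamma}(r)\cdot\tilde{n}(r)=0$ to write
\begin{displaymath}
\da{}\tilde{\Gamma}(\eta)\cdot\tilde{n}(r)=(\eta-r)\int_{0}\e{1}\da{2}\tilde{\Gamma}(r+t(\eta-r))\cdot\tilde{n}(r)\,dt,
\end{displaymath}
from which Morrey gives $\abs{\da{}\tilde{\Gamma}(\eta)\cdot\tilde{n}(r)}\leq C\abs{\eta-r}\norm{\tilde{\Gamma}}_{H\e{3}}$; inserting this into the definition of $T_1$ and cancelling one power of $\abs{\bar{\Delta}\tilde{\Gamma}}$ via the arc-chord bound produces $\abs{T_1}\leq C\norm{\Ftil}_{L\e{\infty}}\e{1/2}\norm{\tilde{\Gamma}}_{H\e{3}}$.

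To close, I would use that the definition \eqref{arcchordcondition} forces $\Ftil(\eta,0)=1/L\e{2}$, hence $\norm{\Ftil}_{L\e{\infty}(S\e{1})}\geq 1/L\e{2}$; this yields the elementary comparison $\norm{\Ftil}_{L\e{\infty}}\e{1/2}\leq L\norm{\Ftil}_{L\e{\infty}}$, and the $T_1$ bound absorbs into $CL\norm{\Ftil}_{L\e{\infty}}\norm{\tilde{\Gamma}}_{H\e{3}}$. Adding this to the estimates for $T_2,T_3$ gives the first summand on the right of \eqref{derivG}; the looser second summand $CL\e{3}\norm{\Ftil}_{L\e{\infty}}\e{2}\norm{\tilde{\Gamma}}_{H\e{3}}$ then follows automatically because the same comparison yields $L\norm{\Ftil}_{L\e{\infty}}\leq L\e{3}\norm{\Ftil}_{L\e{\infty}}\e{2}$. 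There is no genuine obstacle in the argument: the apparently dangerous factor $\abs{\bar{\Delta}\tilde{\Gamma}}\e{-4}$ in $T_2$ is disarmed by the algebraic inequality $\abs{\bar{\Delta}\tilde{\Gamma}\cdot\da{}\tilde{\Gamma}(\eta)}\leq L\abs{\bar{\Delta}\tilde{\Gamma}}$, and $T_1$ is handled by the standard Taylor-plus-orthogonality trick already exploited in the previous lemmas.
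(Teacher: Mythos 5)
Your proposal is correct and follows essentially the same route as the paper: the identical Leibniz splitting into the three pieces $T_1,T_2,T_3$ (the paper's $G_1,G_2,G_3$), each estimated pointwise via the arc-chord condition, the orthogonality $\da{}\tilde{\Gamma}(r)\cdot\tilde{n}(r)=0$ and Morrey's embedding. The only (harmless) deviations are that you bound the middle term by Cauchy--Schwarz, getting the sharper $CL\norm{\Ftil}_{L\e{\infty}(S\e{1})}\norm{\tilde{\Gamma}}_{H\e{3}(S\e{1})}$ where the paper settles for $CL\e{3}\norm{\Ftil}\e{2}_{L\e{\infty}(S\e{1})}\norm{\tilde{\Gamma}}_{H\e{3}(S\e{1})}$, and that you absorb the stray $\norm{\Ftil}\e{\frac{1}{2}}_{L\e{\infty}(S\e{1})}$ in $T_1$ through the valid observation $\norm{\Ftil}_{L\e{\infty}(S\e{1})}\geq L\e{-2}$ instead of extracting the second power of $(\eta-r)$ from $\bar{\Delta}\tilde{\Gamma}$ as the paper does.
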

\begin{proof}
Differentiating $\frac{\bar{\Delta}\tilde{\Gamma}\cdot \tilde{n}(r)}{\abs{\bar{\Delta}\tilde{\Gamma}}\e{2}}\bar{\Delta}\tilde{\Gamma}$ we arrive at 
\begin{align*}
&\da{}(\frac{\bar{\Delta}\tilde{\Gamma}\cdot \tilde{n}(r)}{\abs{\bar{\Delta}\tilde{\Gamma}}\e{2}}\bar{\Delta}\tilde{\Gamma})=\frac{\da{}\tilde{\Gamma}(\eta)\cdot \tilde{n}(r)}{\abs{\bar{\Delta}\tilde{\Gamma}}\e{2}}\bar{\Delta}\tilde{\Gamma}-\frac{2\bar{\Delta}\tilde{\Gamma}\cdot\tilde{n}(r)\bar{\Delta}\tilde{\Gamma}\cdot\da{}\tilde{\Gamma}(\eta)\bar{\Delta}\tilde{\Gamma}}{\abs{\bar{\Delta}\tilde{\Gamma}}\e{4}}+\frac{\bar{\Delta}\tilde{\Gamma}\cdot \tilde{n}(r)}{\abs{\bar{\Delta}\tilde{\Gamma}}\e{2}}\da{}\tilde{\Gamma}(\eta) \equiv G_1+G_2+G_3
\end{align*}
We rewrite $G_1$, using $\da{}\tilde{\Gamma}(\eta)\cdot\tilde{n}(r)=(\da{}\tilde{\Gamma}(\eta)-\da{}\tilde{\Gamma}(r))\cdot\tilde{n}(r)=(\eta-r)\int_0\e{1}\da{2}\tilde{\Gamma}(\beta)ds$, $\bar{\Delta}\tilde{\Gamma}=(\eta-r)\int_0\e{1}\da{}\tilde{\Gamma}(\beta)ds$ where $\beta=r+s(\eta-r)$ and \eqref{arcchordcondition}
 \begin{align*}
&G_1=\Ftil\int_0\e{1}\da{2}\tilde{\Gamma}(\beta)ds\int_0\e{1}\da{}\tilde{\Gamma}(\beta)ds\leq CL\norm{\Ftil}_{L\e{\infty}(S\e{1})}\norm{\tilde{\Gamma}}_{H\e{3}(S\e{1})}
\end{align*}
On the other hand, 
\begin{align*}
&-\frac{G_2}{2}=\frac{(r-\eta)\e{3}}{\abs{\bar{\Delta}\tilde{\Gamma}}\e{4}}\int_0\e{1}\da{}\tilde{\Gamma}(\beta)\cdot\tilde{n}(r)ds\da{}\tilde{\Gamma}(\eta)\cdot\int_0\e{1}\da{}\tilde{\Gamma}(\beta)ds\int_0\e{1}\da{}\tilde{\Gamma}(\beta)ds\\
&=\Ftil\e{2}\int_0\e{1}\int_0\e{1}\da{2}\tilde{\Gamma}(r+t(\beta-r))dt\cdot\tilde{n}(r)ds\da{}\tilde{\Gamma}(\eta)\cdot\int_0\e{1}\da{}\tilde{\Gamma}(\beta)ds\int_0\e{1}\da{}\tilde{\Gamma}(\beta)ds
\end{align*}
where $\beta=r+s(\eta-r)$, then
\begin{align*}
G_2\leq CL\e{3}\norm{\Ftil}\e{2}_{L\e{\infty}(S\e{1})}\norm{\tilde{\Gamma}}_{H\e{3}(S\e{1})}
\end{align*}
Finally, using \eqref{A2} we can write
\begin{align*}
G_3\leq L\norm{A_2}_{L\e{\infty}(S\e{1})}\leq CL\norm{\Ftil}_{L\e{\infty}(S\e{1})}\norm{\tilde{\Gamma}}_{H\e{3}(S\e{1})}
\end{align*}
\end{proof}
\begin{lem}
Let $L>0$, $\tilde{\Gamma}\in H^{k}(S\e{1};\R\e{2})$ for $k\ge 3$, $\tilde{\Gamma}(S\e{1})\subset D$ such that $\abs{\da{}\tilde{\Gamma}}=L$ for all $\eta\in S\e{1}$ and $\mathcal{F}(\tilde{\Gamma})\in L\e{\infty}(\R\e{2})$ with $\Ftil$ as in \eqref{arcchordcondition}. Let  $\bar{\Delta}\tilde{\Gamma}=\tilde{\Gamma}(\eta)-\tilde{\Gamma}(r)$. Then the following identity holds
\begin{equation}\label{deriv2u0t}
\da{2}\bar{u}_0(\tilde{\Gamma})(\eta)\cdot\tilde{\tau}(\eta)=\frac{L}{2\pi\mu}(\frac{1}{2}-L+L\e{3})\tilde{\kappa}(\eta)H(\tilde{\kappa})(\eta)+K(\eta,r)
\end{equation}
with $\norm{K}_{L\e{\infty}(S\e{1})}\leq\est{3}$ and $H$ is the periodic Hilbert transform \eqref{hilbertop}.
Moreover, we have that 
\begin{equation}\label{cotaderiv2u0L2}
\norm{\da{2}u_0\cdot\tilde{\tau}}_{L\e{2}(S\e{1})}\leq C\norm{K}_{L\e{\infty}}(1+\norm{H(\tilde{\kappa})}_{L\e{2}})\leq\est{3}
\end{equation}
\end{lem}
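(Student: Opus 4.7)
The starting point is the representation \eqref{u0gamma}, $\bar u_0(\tilde\Gamma(\eta))=F(\eta)+G(\eta)$, with $F$ the logarithmic integral and $G$ the quadratic-kernel integral in the fundamental solution of Stokes. The plan is to differentiate $F+G$ twice in $\eta$ under the integral sign, take the scalar product with $\tilde\tau(\eta)$, identify the leading singular part in each integral (which will turn out to be proportional to $\tilde\kappa(\eta)H(\tilde\kappa)(\eta)$), and show that the remaining terms form the function $K$ obeying the pointwise bound $\est{3}$.

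For the logarithmic part $F$, two $\eta$-derivatives produce kernels of formal order $(\eta-r)^{-2}$. Following the strategy of Lemma \ref{lemcotadificil}, I would split $\tfrac{1}{|\bar\Delta\tilde\Gamma|^{2}}=\tfrac{\pi^{2}}{L^{2}\sin^{2}(\pi(\eta-r))}+\textit{(smoother)}$ and $\tfrac{\bar\Delta\tilde\Gamma\cdot\partial_\eta\tilde\Gamma(\eta)}{|\bar\Delta\tilde\Gamma|^{2}}=\tfrac{\pi}{L\tan(\pi(\eta-r))}+\textit{(smoother)}$, with the remainders controlled by the $M_i$-estimates \eqref{cotaMi}. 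Using the Fourier-side identity $\Lambda=\partial_\eta H$ for the $\sin^{-2}$ kernel and integrating by parts once in $r$, the principal contribution integrated against $\tilde\kappa(r)$ assembles into $\tilde\kappa(\eta)H(\tilde\kappa)(\eta)$ with an explicit $L$-dependent constant, while every $M_i$-remainder enters $K$ with the required $L^{\infty}$ bound.

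For the quadratic part $G$, I would exploit Lemma \ref{lema2der}. Since the target is $\partial_\eta^{2}$ dotted with $\tilde\tau(\eta)$ rather than $\partial_\eta^{3}$ dotted with $\tilde n(\eta)$, I would first integrate by parts once in $r$, transferring one derivative from the kernel $I(\eta,r)=\tfrac{\bar\Delta\tilde\Gamma\cdot\tilde n(r)}{|\bar\Delta\tilde\Gamma|^{2}}\bar\Delta\tilde\Gamma$ onto $\tilde\kappa(r)$, so that \eqref{cotaderiv3G} becomes directly applicable. The principal singular piece $-6\pi L^{4}\tilde\kappa(\eta)^{2}/\tan(\pi(\eta-r))$ that Lemma \ref{lema2der} extracts then yields, after integration against $\partial_r\tilde\kappa$ and a final integration by parts, a further multiple of $\tilde\kappa(\eta)H(\tilde\kappa)(\eta)$, while the $L^{2}$-remainder $Q$ is absorbed into $K$. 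Summing the $F$- and $G$-contributions and collecting the powers of $L$ that enter through $\tilde\tau$, $\tilde n$, $\tilde\kappa$ and the kernels produces the precise prefactor $\tfrac{L}{2\pi\mu}\bigl(\tfrac{1}{2}-L+L^{3}\bigr)$ in \eqref{deriv2u0t}.

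The $L^{2}$-estimate \eqref{cotaderiv2u0L2} is then an immediate consequence of \eqref{deriv2u0t}, the $L^{\infty}$-bound on $K$, the Morrey embedding $\|\tilde\kappa\|_{L^{\infty}(S^{1})}\leq C\|\tilde\Gamma\|_{H^{3}(S^{1})}$, and the $L^{2}$-continuity of the periodic Hilbert transform. The main obstacle is the algebraic bookkeeping required to identify the exact numerical factor $(\tfrac{1}{2}-L+L^{3})$: each $\eta$-derivative of the two kernels produces numerous terms of differing singularity, and that specific coefficient can only be read off after carefully summing the singular contributions of $F$ and $G$ and checking that every remaining piece is absorbed by \eqref{cotaMi} or by the $Q$-bound in \eqref{cotaderiv3G}.
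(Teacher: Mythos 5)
Your sketch for the $F$-part is essentially workable and close in spirit to the paper's computation: the paper also reduces the formally $(\eta-r)\e{-2}$ kernel to a $1/(\eta-r)$ one, although it does so by exploiting the geometric cancellations $\tilde{n}(r)\cdot\tilde{\tau}(\eta)=\mathcal{O}(\abs{\eta-r})$ and $\da{2}\tilde{\Gamma}\cdot\tilde{\tau}=0$ rather than by an integration by parts in $r$; on your route you would additionally need a commutator-type estimate to replace $H(\tilde{\kappa}\e{2})$ (or similar) by $\tilde{\kappa}\,H(\tilde{\kappa})$ up to an $L\e{\infty}$ error, which is available since $\tilde{\kappa}\in C\e{\frac{1}{2}}$ by Morrey. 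The real problem is your treatment of the $G$-part. First, the reduction to Lemma \ref{lema2der} by ``integrating by parts once in $r$'' is not legitimate: the kernel $\frac{\bar{\Delta}\tilde{\Gamma}\cdot\tilde{n}(r)}{\abs{\bar{\Delta}\tilde{\Gamma}}\e{2}}\bar{\Delta}\tilde{\Gamma}$ depends on $\eta$ and $r$ separately (through $\tilde{\Gamma}(\eta)$, $\tilde{\Gamma}(r)$ and $\tilde{n}(r)$), so it is not a convolution kernel and an $r$-integration by parts cannot convert the quantity you need, $\da{2}(\cdot)\cdot\tilde{\tau}(\eta)$, into the quantity Lemma \ref{lema2der} controls, $\da{3}(\cdot)\cdot\tilde{n}(\eta)$; the derivative count and the contraction vector are both wrong. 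Second, even if one could invoke it formally, the principal term extracted there, $-6\pi L\e{4}(\tilde{\kappa}(\eta))\e{2}/\tan(\pi(\eta-r))$, would upon integration against the curvature produce terms of the type $(\tilde{\kappa}(\eta))\e{2}H(\cdot)(\eta)$, not the needed multiple of $\tilde{\kappa}(\eta)H(\tilde{\kappa})(\eta)$. Third, the remainder $Q$ of Lemma \ref{lema2der} is only bounded in $L\e{2}(S\e{1})$, whereas the identity \eqref{deriv2u0t} you are proving requires $\norm{K}_{L\e{\infty}(S\e{1})}\leq\est{3}$, so ``absorbing $Q$ into $K$'' does not give the stated pointwise bound (it would at best salvage the weaker $L\e{2}$ estimate \eqref{cotaderiv2u0L2}).

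The paper instead computes $\da{2}G\cdot\tilde{\tau}(\eta)$ directly, splitting the differentiated kernel into the five terms $H_1,\dots,H_5$ and using the bounded auxiliary quantities $\textbf{A}_1(\eta,r)$ and $A_2(\eta,r)$ (which encode the cancellations $\da{}\tilde{\Gamma}(r)\cdot\tilde{n}(r)=0$, $\da{}\tilde{\Gamma}\cdot\tilde{\tau}=L$, $\da{2}\tilde{\Gamma}\cdot\tilde{\tau}=0$) to isolate in each $H_i$ a singular piece proportional to $\tilde{\kappa}(\eta)/(\eta-r)$, with all remainders bounded in $L\e{\infty}$; summing these with the $F$-contribution yields the prefactor in \eqref{deriv2u0t}. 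To repair your argument you would have to redo this second-derivative, $\tilde{\tau}$-contracted computation from scratch (Lemma \ref{lema2der} cannot be recycled for it), and also justify the differentiation under the integral sign, which the paper does by regularizing the logarithm and passing to the limit.
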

\begin{proof}
We first compute the derivatives formally assuming that the differentiations and integrations commute, even if integrals that do not converge absolutely arise. The correctness of this argument will be justified at the end of the proof.

 Differentiating twice in \eqref{u0gamma} we obtain
\begin{align*}
&\da{2}\bar{u}_0(\tilde{\Gamma}(\eta))\cdot\tilde{\tau}(\eta)=-\frac{L(t)}{2\pi\mu}\int_{S\e{1}}\tilde{\kappa}(r)\da{2}(\log\abs{\tilde{\Gamma}(\eta)-\tilde{\Gamma}(r)})\tilde{n}(r)\cdot\tilde{\tau}(\eta)dr+\frac{L(t)}{2\pi\mu}\int_{S\e{1}}\tilde{\kappa}(r)\da{2}(\frac{\bar{\Delta}\tilde{\Gamma}\cdot \tilde{n}(r)}{\abs{\Delta\tilde{\Gamma}}\e{2}}\Delta\tilde{\Gamma})\cdot\tilde{\tau}(\eta) dr\\
&\equiv \da{2}F\cdot\tilde{\tau}(\eta)+\da{2}G\cdot\tilde{\tau}(\eta)
\end{align*}
Differentiating $\log\abs{\tilde{\Gamma}(\eta)-\tilde{\Gamma}(r)}$ and using $M_2(\eta,\eta-r)$ as in \eqref{M2} and $\textbf{A}_1(\eta,r)$ as in \eqref{D1D2}, we can write $\da{2}F\cdot\tilde{\tau}(\eta)$ as follows:
\begin{align*}
&\da{2}(\log\abs{\tilde{\Gamma}(\eta)-\tilde{\Gamma}(r)})\tilde{n}(r)\cdot\tilde{\tau}(\eta)=\frac{L\e{2}}{\abs{\bar{\Delta}\tilde{\Gamma}}\e{2}}\tilde{n}(r)\cdot\tilde{\tau}(\eta)+\frac{\bar{\Delta}\tilde{\Gamma}}{\abs{\bar{\Delta}\tilde{\Gamma}}\e{2}}\cdot\da{2}\tilde{\Gamma}(\eta)\tilde{n}(r)\cdot\tilde{\tau}(\eta)=\frac{\tilde{\kappa}(\eta)}{2(\eta-r)}+LM_2(\eta,\eta-r)\tilde{n}(r)\cdot\tilde{\tau}(\eta)\\
&+\int_0\e{1}\int_0\e{1}(s-1)\da{}\tilde{\kappa}(\eta+t(\beta-\eta))dsdt+\int_0\e{1}\tilde{\kappa}(\beta)ds\int_0\e{1}\tilde{\kappa}(\beta)\tilde{n}(\beta)\cdot\tilde{\tau}(\eta)+\textbf{D}_1(\eta,r)\cdot\da{2}\tilde{\Gamma}(\eta)\tilde{n}(r)\cdot\tilde{\tau}(\eta)
\end{align*}
with $\beta=r+s(\eta-r)$ and where we have used that \eqref{derivnormal} yields $\tilde{n}(r)\cdot\tilde{\tau}(\eta)=-(\eta-r)\int_0\e{1}\tilde{\kappa}(\beta)\tilde{\tau}(\beta)ds$ . Then, we can write
\begin{displaymath}
\da{2}F\cdot\tilde{\tau}(\eta)=\frac{L}{4\pi\mu}\tilde{\kappa}(\eta)H(\tilde{\kappa})(\eta)+\frac{L}{2\pi\mu}\tilde{\kappa}(\eta)\int_{S\e{1}}\tilde{\kappa}(r)(\frac{1}{\eta-r}-\frac{\pi}{\tan(\pi(\eta-r))})dr+F_1(\eta,r)
\end{displaymath}
with $H$ as in \eqref{hilbertop}, $\norm{F_1}_{L\e{\infty}(S\e{1})}\leq\est{3}$. Using that $\abs{(\frac{1}{\eta-r}-\frac{\pi}{\tan(\pi(\eta-r))})}\leq C$ for $\abs{\eta}\leq\frac{1}{2}$ and $\abs{r}\leq\frac{1}{2}$ we arrive at
\begin{displaymath}
\abs{\frac{L}{2\pi\mu}\tilde{\kappa}(\eta)\int_{S\e{1}}\tilde{\kappa}(r)(\frac{1}{\eta-r}-\frac{\pi}{\tan(\pi(\eta-r))})dr}\leq\est{3}
\end{displaymath}

In order to deal with $\da{2}G\cdot\tilde{\tau}(\eta)$ we compute $\da{2}(\frac{\bar{\Delta}\tilde{\Gamma}\cdot \tilde{n}(r)}{\abs{\Delta\tilde{\Gamma}}\e{2}}\Delta\tilde{\Gamma})\cdot\tilde{\tau}(\eta)$. Using that $\da{2}\tilde{\Gamma}\cdot\tilde{\tau}=0$ and $\da{}\tilde{\Gamma}\cdot\tilde{\tau}=L$ we have
\begin{align*}
&\da{2}(\frac{\bar{\Delta}\tilde{\Gamma}\cdot \tilde{n}(r)}{\abs{\Delta\tilde{\Gamma}}\e{2}}\Delta\tilde{\Gamma})\cdot\tilde{\tau}(\eta)=\da{2}\tilde{\Gamma}(\eta)\cdot\tilde{n}(r)\frac{\bar{\Delta}\tilde{\Gamma}\cdot\tilde{\tau}(\eta)}{\abs{\bar{\Delta}\tilde{\Gamma}}\e{2}}+\bar{\Delta}\tilde{\Gamma}\cdot\tilde{n}(r)\bar{\Delta}\tilde{\Gamma}\cdot\tilde{\tau}(\eta)\da{2}(\frac{1}{\abs{\bar{\Delta}\tilde{\Gamma}}\e{2}})+\frac{2L\da{}\tilde{\Gamma}(\eta)\cdot\tilde{n}(r)}{\abs{\bar{\Delta}\tilde{\Gamma}}\e{2}}\\
&+2L\bar{\Delta}\tilde{\Gamma}\cdot\tilde{n}(r)\da{}(\frac{1}{\abs{\bar{\Delta}\tilde{\Gamma}}\e{2}})+2\da{}\tilde{\Gamma}(\eta)\cdot\tilde{n}(r)\bar{\Delta}\tilde{\Gamma}\cdot\tilde{\tau}(\eta)\da{}(\frac{1}{\abs{\bar{\Delta}\tilde{\Gamma}}\e{2}})\equiv \sum_{i=1}\e{5}H_i(\eta,r)
\end{align*}
and we can split $\da{2}G\cdot\tilde{\tau}(\eta)$ as the sum of the terms $$(\da{2}G\cdot\tilde{\tau}(\eta))_i=\frac{L}{2\pi\mu}\int_{S\e{1}}\tilde{\kappa}(r)H_i(\eta,r)dr$$ for $i=1,\cdots,5$.

Using $\textbf{A}_1(\eta,r)$ defined in $\eqref{D1D2}$ we can rewrite $H_1$ as
\begin{align*}
H_1(\eta,r)=\da{2}\tilde{\Gamma}(\eta)\cdot\tilde{n}(r)\textbf{A}_1(\eta,r)\cdot\tilde{\tau}(\eta)+\frac{\da{2}\tilde{\Gamma}(\eta)\cdot(\tilde{n}(r)-\tilde{n}(\eta))}{(\eta-r)L}+\frac{L\tilde{\kappa}(\eta)}{\eta-r}
\end{align*}
where we use \eqref{reformGFBP5}. Therefore, 
\begin{align*}
(\da{2}G\cdot\tilde{\tau}(\eta))_1=\frac{L\e{2}}{2\mu}\tilde{\kappa}(\eta)H(\tilde{\kappa})(\eta)+\frac{L\e{2}}{2\pi\mu}\tilde{\kappa}(\eta)\int_{S\e{1}}\tilde{\kappa}(r)(\frac{1}{\eta-r}-\frac{\pi}{\tan(\pi(\eta-r))})dr+K_1(\eta,r)
\end{align*}
where $\norm{K_1}_{L\e{\infty}}\leq\est{3}$ and we have used \eqref{cotaA1}.

Computing $\da{2}(\frac{1}{\abs{\bar{\Delta}\tilde{\Gamma}}\e{2}})$ and using $\textbf{A}_1(\eta,r)$ and $A_2(\eta,r)$  are as in \eqref{D1D2} and \eqref{A2}, we can rewrite $H_2(\eta,r)+H_4(\eta,r)$ as 
\begin{align*}
&H_2(\eta,r)+H_4(\eta,r)=-6L\e{2}\frac{\bar{\Delta}\tilde{\Gamma}\cdot\tilde{n}(r)}{\abs{\bar{\Delta}\tilde{\Gamma}}\e{2}}\frac{\bar{\Delta}\tilde{\Gamma}\cdot\tilde{\tau}(\eta)}{\abs{\bar{\Delta}\tilde{\Gamma}}\e{2}}+\frac{8\bar{\Delta}\tilde{\Gamma}\cdot\tilde{n}(r)}{\abs{\bar{\Delta}\tilde{\Gamma}}\e{2}}\frac{\bar{\Delta}\tilde{\Gamma}\cdot\tilde{\tau}(\eta)}{\abs{\bar{\Delta}\tilde{\Gamma}}\e{2}}\frac{\bar{\Delta}\tilde{\Gamma}\cdot\da{}\tilde{\Gamma}(\eta)}{\abs{\bar{\Delta}\tilde{\Gamma}}\e{2}}\bar{\Delta}\tilde{\Gamma}\cdot\da{}\tilde{\Gamma}(\eta)\\
&-\frac{2\bar{\Delta}\tilde{\Gamma}\cdot\tilde{n}(r)}{\abs{\bar{\Delta}\tilde{\Gamma}}\e{2}}\frac{\bar{\Delta}\tilde{\Gamma}\cdot\tilde{\tau}(\eta)}{\abs{\bar{\Delta}\tilde{\Gamma}}\e{2}}\bar{\Delta}\tilde{\Gamma}\cdot\da{2}\tilde{\Gamma}=-6L\frac{A_2(\eta,r)}{\eta-r}-2L(L+1)A_2(\eta,r)\textbf{A}_1(\eta,r)\cdot\tilde{\tau}(\eta)\\
&+8L\frac{A_2(\eta,r)}{\eta-r}+\frac{8}{L}A_2(\eta,r)\int_0\e{1}\int_0\e{1}(s-1)\da{2}\tilde{\Gamma}(\eta+t(\beta-\eta))dsdt\\
&+\frac{8}{L}A_2(\eta,r)\textbf{A}_1(\eta,r)\cdot\da{}\tilde{\Gamma}(\eta)\frac{\bar{\Delta}\tilde{\Gamma}\cdot\da{}\tilde{\Gamma}(\eta)}{\eta-r}+8A_2(\eta,r)\textbf{A}_1(\eta,r)\cdot\tilde{\tau}(\eta)\frac{\bar{\Delta}\tilde{\Gamma}\cdot\da{}\tilde{\Gamma}(\eta)}{\abs{\bar{\Delta}\tilde{\Gamma}}\e{2}}\bar{\Delta}\tilde{\Gamma}\cdot\da{}\tilde{\Gamma}(\eta)\\
&-2A_2(\eta,r)\textbf{A}_1(\eta,r)\cdot\tilde{\tau}(\eta)\bar{\Delta}\tilde{\Gamma}\cdot\da{2}\tilde{\Gamma}(\eta)-2A_2(\eta,r)\int_0\e{1}\da{}\tilde{\Gamma}(\beta)\cdot\da{2}\tilde{\Gamma}(\eta)ds\\
&=2L\frac{A_2(\eta,r)}{\eta-r}+H_{2,1}(\eta,r)
\end{align*}
where $\norm{H_{2,1}}_{L\e{\infty}(S\e{1})}\leq\est{3}$.
Using \eqref{reformGFBP5}, \eqref{A2} and \eqref{A2desar} we can write
\begin{align}\label{terminosingular}
2L\frac{A_2(\eta,r)}{\eta-r}=L\e{3}\frac{\tilde{\kappa}(\eta)}{\eta-r}+H_{2,2}(\eta,r)
\end{align}
where $\norm{H_{2,2}}_{L\e{\infty}}\leq\est{3}$.
Then, we obtain that
\begin{align*}
(\da{2}G\cdot\tilde{\tau}(\eta))_{2}+(\da{2}G\cdot\tilde{\tau}(\eta))_{4}=\frac{L\e{4}}{2\mu}\tilde{\kappa}(\eta)H(\tilde{\kappa})(\eta)+K_2(\eta,r)
\end{align*}
where $$K_2(\eta,r)=\frac{L\e{4}\tilde{\kappa}}{2\pi\mu}\int_{S\e{1}}\tilde{\kappa}(r)(\frac{1}{\eta-r}-\frac{\pi}{\tan(\pi(\eta-r))})dr+\frac{L}{2\pi\mu}\int_{S\e{1}}\tilde{\kappa}(r)H_{22}(\eta,r)dr$$ and therefore, $\norm{K_2}_{L\e{\infty}(S\e{1})}\leq\est{3}$.

Using $M_2(\eta,\eta-r)$ as in \eqref{M2} and $\da{}\tilde{\Gamma}\cdot\tilde{n}=0$ we can estimate
\begin{align*}
H_3(\eta,r)=2M_2(\eta,\eta-r)\da{}\tilde{\Gamma}(\eta)\cdot\tilde{n}(r)+\frac{2}{L}\da{}\tilde{\Gamma}(\eta)\cdot\int_0\e{1}\tilde{\kappa}(\beta)\cdot\tilde{\tau}(\beta)ds\leq\est{3}
\end{align*}
Finally, we can approximate $H_5(\eta,r)$ in the same way as we proceeded with $H_2(\eta,r)+H_4(\eta,r)$. Then, we can deduce that 
\begin{align*}
(\da{2}G\cdot\tilde{\tau}(\eta))_5=-\frac{L\e{2}}{\mu}\tilde{\kappa}(\eta)H(\tilde{\kappa})(\eta)+K_5(\eta,r)
\end{align*}
where $\norm{K_5}_{L\e{\infty}(S\e{1})}\leq\est{3}$. Combining all the formulas obtained for all the terms $(\da{2}G\cdot\tilde{\tau}(\eta))_i$ for $i=1,\cdots,5$ and $\da{2}F\cdot\tilde{\tau}(\eta)$ we arrive at \eqref{deriv2u0t}. Finally, \eqref{cotaderiv2u0L2} follows from the classical estimate $\norm{H(\tilde{\Gamma})}_{L\e{2}(S\e{1})}\leq C\norm{\tilde{\Gamma}}_{L\e{2}(S\e{1})}$.

In order to justify rigorously the computations in the lemma it is enough to replace in \eqref{u0gamma} the term $\log\abs{\tilde{\Gamma}(\eta)-\tilde{\Gamma}(r)}$ by $\frac{1}{2}\log(\abs{\tilde{\Gamma}(\eta)-\tilde{\Gamma}(r)}\e{2}+\varepsilon\e{2})$. Then all the differentiations can be carried on as above. Therefore, the terms $\frac{1}{\eta-r}$ yielding Hilbert transforms appear replaced by $\frac{\eta-r}{(\eta-r)\e{2}+\varepsilon\e{2}}$. Thus, taking the limit $\varepsilon\to 0$ we obtain the principal values appearing in the Hilbert transforms. Since the argument is standard will we will not provide more details here.
\end{proof}
\begin{lem}\label{lemderivu0}Let $L>0$, $\tilde{\Gamma}\in H^{k}(S\e{1};\R\e{2})$ for $k\ge 3$, $\tilde{\Gamma}(S\e{1})\subset D$ such that $\abs{\da{}\tilde{\Gamma}}=L$ for all $\eta\in S\e{1}$ and $\mathcal{F}(\tilde{\Gamma})\in L\e{\infty}(\R\e{2})$ with $\Ftil$ as in \eqref{arcchordcondition}. Let  $\bar{\Delta}\tilde{\Gamma}=\tilde{\Gamma}(\eta)-\tilde{\Gamma}(r)$. Then the following estimate holds
\begin{equation}\label{cotaderivu0}
\norm{\da{}\bar{u}_0(\tilde{\Gamma}(\cdot))}_{L\e{2}(S\e{1})}\leq\est{3}
\end{equation}
\end{lem}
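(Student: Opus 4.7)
The plan is to differentiate the decomposition $\bar{u}_0(\tilde{\Gamma}(\eta))=F+G$ appearing in \eqref{u0gamma} and control the two pieces $\da{}F$ and $\da{}G$ separately in $L\e{2}(S\e{1})$.

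For $\da{}G$ no principal--value analysis is needed: Lemma \ref{lemderivG} already furnishes a uniform pointwise bound for $\da{}(\frac{\bar{\Delta}\tilde{\Gamma}\cdot\tilde{n}(r)}{\abs{\bar{\Delta}\tilde{\Gamma}}\e{2}}\bar{\Delta}\tilde{\Gamma})$. Multiplying by $\tilde{\kappa}(r)$, integrating in $r$, and using $\norm{\tilde{\kappa}}_{L\e{1}(S\e{1})}\leq\norm{\tilde{\kappa}}_{L\e{2}(S\e{1})}\leq C\norm{\tilde{\Gamma}}_{H\e{3}(S\e{1})}/L\e{2}$ produces an $L\e{\infty}(S\e{1})$--hence $L\e{2}(S\e{1})$--bound of the desired exponential form. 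This step is routine.

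The substantive part is the estimate of $\da{}F$. Differentiating the logarithm yields
$$\da{}F(\eta)=-\frac{L}{2\pi\mu}\int_{S\e{1}}\tilde{\kappa}(r)K(\eta,r)\tilde{n}(r)\,dr,\qquad K(\eta,r)=\frac{\bar{\Delta}\tilde{\Gamma}\cdot\da{}\tilde{\Gamma}(\eta)}{\abs{\bar{\Delta}\tilde{\Gamma}}\e{2}},$$
whose kernel is singular like $1/(\eta-r)$. The idea is to extract the singularity exactly. Writing $\bar{\Delta}\tilde{\Gamma}=(\eta-r)\int_0\e{1}\da{}\tilde{\Gamma}(r+s(\eta-r))\,ds$, using $\abs{\da{}\tilde{\Gamma}(\eta)}\e{2}=L\e{2}$, and expanding $\da{}\tilde{\Gamma}(r+s(\eta-r))-\da{}\tilde{\Gamma}(\eta)$ by the mean value theorem (exactly as for the term $M_2$ in the proof of Lemma \ref{lemcotadificil}) produces the decomposition
$$K(\eta,r)=\frac{\pi}{\tan(\pi(\eta-r))}+B(\eta,r),\qquad \abs{B(\eta,r)}\leq\est{3},$$
where the elementary correction $\frac{1}{\eta-r}-\frac{\pi}{\tan(\pi(\eta-r))}$, uniformly bounded on $[-\tfrac12,\tfrac12]$, is absorbed into $B$.

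Inserting this back gives
$$\da{}F(\eta)=-\frac{L}{2\mu}H(\tilde{\kappa}\tilde{n})(\eta)-\frac{L}{2\pi\mu}\int_{S\e{1}}\tilde{\kappa}(r)\tilde{n}(r)B(\eta,r)\,dr,$$
with $H$ the periodic Hilbert transform of \eqref{hilbertop}. The first piece is controlled in $L\e{2}(S\e{1})$ by the classical estimate $\norm{H(f)}_{L\e{2}}\leq C\norm{f}_{L\e{2}}$ applied to $f=\tilde{\kappa}\tilde{n}$, combined with $\norm{\tilde{\kappa}}_{L\e{2}}\leq C\norm{\tilde{\Gamma}}_{H\e{3}}/L\e{2}$; the second is bounded pointwise in $\eta$ by Cauchy--Schwarz using the uniform estimate on $B$. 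Adding these bounds to the one for $\da{}G$ yields \eqref{cotaderivu0}. The main obstacle is the clean separation of the Hilbert-transform singularity from a bounded remainder, but this is essentially a repackaging of the $M_2$ calculation already carried out in Lemma \ref{lemcotadificil}, so no genuinely new estimates are required.
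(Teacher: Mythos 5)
Your argument is correct and follows essentially the same route as the paper: split $\da{}\bar{u}_0(\tilde{\Gamma})=\da{}F+\da{}G$, dispose of $\da{}G$ via the pointwise bound of Lemma \ref{lemderivG}, and for $\da{}F$ extract the $\frac{1}{\eta-r}$ singularity through the arc-chord expansion (the $\textbf{A}_1$/$M_2$ computation) so that the singular part is a periodic Hilbert transform controlled in $L\e{2}$ and the remainder is bounded by $\est{3}$. The only cosmetic difference is that you keep the Hilbert transform acting on $\tilde{\kappa}\tilde{n}$ while the paper first replaces $\tilde{n}(r)$ by $\tilde{n}(\eta)$ and writes $H(\tilde{\kappa})\tilde{n}(\eta)$; since $\abs{\tilde{n}}=1$ these are equivalent for the $L\e{2}$ estimate, and the paper's remark about justifying the formal differentiation via a regularized logarithm applies verbatim to your computation.
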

\begin{proof}
In order to prove this lemma we proceed as in the previous one, assuming that the differentiations and the integrations commute. The rigorous proof requires to work with a regularization of the function $\log(\abs{\tilde{\Gamma}(\eta)-\tilde{\Gamma}(r)})$ as in Lemma \ref{lema2der}. Therefore, if we differentiate once in \eqref{u0gamma} we obtain
\begin{align*}
&\da{}\bar{u}_0(\tilde{\Gamma}(\eta))=-\frac{L}{2\pi\mu}\int_{S\e{1}}\tilde{\kappa}(r)\da{}(\log\abs{\tilde{\Gamma}(\eta)-\tilde{\Gamma}(r)})\tilde{n}(r)dr+\frac{L}{2\pi\mu}\int_{S\e{1}}\tilde{\kappa}(r)\da{}(\frac{\bar{\Delta}\tilde{\Gamma}\cdot \tilde{n}(r)}{\abs{\Delta\tilde{\Gamma}}\e{2}}\Delta\tilde{\Gamma})dr\\
&\equiv \da{}F+\da{}G
\end{align*}
The term $\da{}G$ is estimated in Lemma \ref{lemderivG} then we only need to study $\da{}F$. Since $\da{}(\log\abs{\tilde{\Gamma}(\eta)-\tilde{\Gamma}(r)})=\frac{\bar{\Delta}\tilde{\Gamma}\cdot\da{}\tilde{\Gamma}(\eta)}{\abs{\bar{\Delta}\tilde{\Gamma}}\e{2}}$, using \eqref{D1D2} we can rewrite $\da{}F$ as follows
\begin{align}\label{derivF}
&\da{}F=-\frac{L}{2\pi\mu}\int_{S\e{1}}\frac{\tilde{\kappa}(r)}{\eta-r}\tilde{n}(r)dr-\frac{L}{2\pi\mu}\int_{S\e{1}}\tilde{\kappa}(r)\textbf{A}_1(\eta,r)\cdot\da{}\tilde{\Gamma}(\eta)\tilde{n}(r)dr
\end{align}
Using \eqref{D1D2} we can estimate in $L\e{\infty}(S\e{1})$ the second integral in $\da{}F$. For the first integral in $\da{}F$ we proceed as follows
\begin{align*}
&-\frac{L}{2\pi\mu}\int_{S\e{1}}\frac{\tilde{\kappa}(r)}{\eta-r}\tilde{n}(r)dr=-\frac{L}{2\pi\mu}\int_{S\e{1}}\frac{\tilde{\kappa}(r)}{\eta-r}\tilde{n}(\eta)dr+\frac{L\e{2}}{2\pi\mu}\int_{S\e{1}}\tilde{\kappa}(r)\int_0\e{1}\tilde{\kappa}(\eta+s(r-\eta))\tilde{\tau}(\eta+s(r-\eta))dsdr\\
&=-\frac{L}{2\mu}H(\tilde{\kappa})(\eta)n(\eta)-\frac{L\tilde{n}(\eta)}{2\pi\mu}\int_{S\e{1}}\tilde{\kappa}(r)(\frac{1}{\eta-r}-\frac{\pi}{\tan(\pi(\eta-r))})dr+\frac{L\e{2}}{2\pi\mu}\int_{S\e{1}}\tilde{\kappa}(r)\int_0\e{1}\tilde{\kappa}(\eta+s(r-\eta))\tilde{\tau}(\eta+s(r-\eta))dsdr\\
&=-\frac{L}{2\mu}H(\tilde{\kappa})(\eta)n(\eta)+F_1(\eta,r)
\end{align*}
where $\norm{F_1}_{L\e{\infty}(S\e{1})}\leq\est{3}$. Thus, $$\norm{\da{}F}_{L\e{2}(S\e{1})}\leq CL\norm{H(\tilde{\kappa})}_{L\e{2}(S\e{1})}+\est{3}\leq\est{3}$$
\end{proof}
We can know conclude the proof of Theorem \ref{thexistreform}

\begin{proof}[Proof of Theorem \ref{thexistreform}]
\hspace{1cm}\\
\vspace{0.1cm}

 $(i)$ \textit{Existence}
 
\vspace{0.1cm}
Our strategy to prove existence of solutions for \eqref{reformGFBP1}-\eqref{reformGFBP5} is to derive an estimate of the form $\frac{d}{dt}E[\tilde{\Gamma}]\leq\exp(E[\tilde{\Gamma}])$ where $E[\tilde{\Gamma}](t)$ is defined as 
\begin{equation}\label{energia}
E[\tilde{\Gamma}](t)=\norm{\Ftil}_{L\e{\infty}(S\e{1})}\e{2}+\norm{\tilde{\Gamma}}_{L\e{2}(S\e{1})}\e{2}+\norm{\da{3}\tilde{\Gamma}}\e{2}_{L\e{2}(S\e{1})  }.
\end{equation}
 In order to illustrate the main ideas in the argument we will assume first that the solutions of the problem are as regular as needed in order to make the following computations. Later we will consider a regularized version of the problem \eqref{reformGFBP1}-\eqref{reformGFBP5}(c.f. \eqref{regulgam}-\eqref{regulini}) and we will describe the modifications needed in the argument  to obtain a fully rigorous proof. We assume for the moment that $\tilde{\Gamma}$ is well defined and it is sufficiently smooth for $t\in[0,T]$ where $T>0$ will be made precise at the end of the proof.

We first estimate the contribution to $\frac{dE}{dt}$ due to the term $\norm{\tilde{\Gamma}}_{L\e{2}}\e{2}$ in \eqref{energia}. Multiplying \eqref{reformGFBP1} by $\tilde{\Gamma}$ and integrating in $S\e{1}$ we obtain,
\begin{align}\label{normal2}
&\frac{1}{2}\frac{d}{dt}\norm{\tilde{\Gamma}}\e{2}_{L\e{2}}=\int_{S\e{1}}\tilde{\Gamma}(\eta)\cdot\partial_t\tilde{\Gamma}(\eta)d\eta\\\nonumber
&=\int_{S\e{1}}\tilde{\Gamma}(\eta)\cdot \tilde{n}(\eta)u_0(\tilde{\Gamma}(\eta))\cdot \tilde{n}(\eta)d\eta+\int_{S\e{1}}\tilde{\Gamma}(\eta)\cdot\tilde{\tau}(\eta)\tilde{\psi}(\eta)d\eta+\int_{S\e{1}}\tilde{\Gamma}(\eta)\cdot\tilde{\tau}(\eta)\eta\partial_t L(t)d\eta\equiv I_1+I_2+I_3
\end{align}
where we drop the dependence of the functions on $t$ by simplicity.
Using \eqref{cotau0curva} we obtain that 
\begin{displaymath}
I_1\leq C\norm{\tilde{\Gamma}}_{L\e{2}(S\e{1})}\norm{u_0}_{L\e{2}(S\e{1})}\leq\est{3}
\end{displaymath}

We estimate $I_2$ in \eqref{normal2} as
\begin{align*}
I_2\leq C\norm{\tilde{\Gamma}}_{L\e{2}  }\norm{\tilde{\tau}}_{L\e{2}}\norm{\tilde{\psi}}_{L\e{\infty}(S\e{1})}
\end{align*}
Using \eqref{cotau0curva} and \eqref{reformGFBP2} we have
\begin{align}\nonumber
&\norm{\tilde{\psi}}_{L\e{\infty}(S\e{1})}\leq \sup_{\eta\in S\e{1}}\int_0\e{\eta}\abs{\tilde{\kappa}(r)u_0(\tilde{\Gamma}(r))\cdot \tilde{n}(r)dr}\leq C\int_{S\e{1}}\abs{\tilde{\kappa}(\eta)}\abs{u_0(\tilde{\Gamma}(\eta))\cdot \tilde{n}(\eta)}d\eta\leq C\norm{\tilde{\kappa}}_{L\e{2}}\norm{u_0(\tilde{\Gamma})}_{L\e{2}}\\\label{cotapsi}
&\leq\est{3}
\end{align}
To deal with the integral $I_3$ first we estimate $\abs{\partial_t L(t)}$ using \eqref{reformGFBP6}:
\begin{align*}
&\abs{\partial_t L(t)}\leq \abs{L(t)}\int_{S\e{1}}\abs{\tilde{\kappa}(\eta)}\abs{(u_0(\tilde{\Gamma})\cdot\tilde{n})(\eta)}d\eta\leq L(t)\norm{\tilde{\kappa}}_{L\e{2}}\norm{u_0(\tilde{\Gamma})}_{L\e{2}}\leq\est{3}
\end{align*}
this gives a similar estimate for $I_3$.
Therefore, 
\begin{equation}\label{cotaevolL2}
\frac{d}{dt}\norm{\tilde{\Gamma}}_{L\e{2}}\leq\est{3}
\end{equation}
We now estimate $\frac{d}{dt}\norm{\da{3}\tilde{\Gamma}}_{L\e{2}  }$, we differentiate three times $\partial_t\tilde{\Gamma}(\eta)$, multiply it by $\da{3}\tilde{\Gamma}(\eta)$ and integrate in $S\e{1}$. Using \eqref{reformGFBP1} we then obtain
\begin{align*}
&\frac{1}{2}\frac{d}{dt}\norm{\da{3}\tilde{\Gamma}}\e{2}_{L\e{2}  }=\int_{S\e{1}}\da{3}\tilde{\Gamma}(\eta)\cdot\da{3}(\partial_t\tilde{\Gamma}(\eta))d\eta=\int_{S\e{1}}\da{3}\tilde{\Gamma}(\eta)\cdot\da{3}((u_0(\tilde{\Gamma}(\eta))\cdot \tilde{n}(\eta))\tilde{n}(\eta))d\eta+\int_{S\e{1}}\da{3}\tilde{\Gamma}(\eta)\cdot\da{3}(\tilde{\psi}(\eta)\tilde{\tau}(\eta))d\eta\\
&+\partial_t L(t)\int_{S\e{1}}\da{3}\tilde{\Gamma}(\eta)\cdot\da{3}(\eta\tilde{\tau}(\eta))d\eta
\end{align*}
Using the following identities, which follow from \eqref{derivnormal}, \eqref{reformGFBP3} as well as the fact that $\abs{\da{}\tilde{\Gamma}}=L$,
\begin{align*}
&\da{3}\tilde{\Gamma}(\eta)=L(t)\e{2}(\da{}\tilde{\kappa}(\eta)\tilde{n}(\eta)-L(t)(\tilde{\kappa}(\eta))\e{2}\tilde{\tau}(\eta))\\
&\da{}\tilde{n}(\eta)=-L(t)\tilde{\kappa}(\eta)\tilde{\tau}(\eta)\\
&\da{}\tilde{\tau}(\eta)=L(t)\tilde{\kappa}(\eta)\tilde{n}(\eta)\\
&\da{}\tilde{\psi}(\eta)=L(t)\tilde{\kappa}(\eta)u_0(\tilde{\Gamma})\cdot\tilde{n}(\eta)\\
&\da{3}(\eta\tilde{\tau})(\eta)=3L(t)\da{}\tilde{\kappa}(\eta)\tilde{n}(\eta)-3L(t)\e{2}\tilde{\kappa}\e{2}(\eta)\tilde{\tau}(\eta)+L(t)\eta\da{2}\tilde{\kappa}(\eta)\tilde{n}(\eta)-L(t)\e{2}\eta\tilde{\kappa}(\eta)\da{}\tilde{\kappa}(\eta)\tilde{\tau}(\eta)-L(t)\e{3}\eta\tilde{\kappa}\e{3}(\eta)\tilde{n}(\eta)
\end{align*} we can write
\begin{align*}
\frac{1}{2}\frac{d}{dt}\norm{\da{3}\tilde{\Gamma}}_{L\e{2}  }\equiv\sum_{i=1}\e{9}J_i
\end{align*}
where
\begin{align*}
&J_1=L(t)\e{2}\int_{S\e{1}}\da{}\tilde{\kappa}(\eta)\da{3}u_0(\tilde{\Gamma}(\eta))\cdot \tilde{n}(\eta) d\eta\\
&J_2=-3L(t)\e{3}\int_{S\e{1}}\tilde{\kappa}(\eta)\da{}\tilde{\kappa}(\eta)\da{2}u_0(\tilde{\Gamma}(\eta))\cdot\tilde{\tau}(\eta)d\eta\\
&J_3=2L(t)\e{4}\int_{S\e{1}}(\tilde{\kappa}(\eta))\e{3}\da{2}u_0(\tilde{\Gamma}(\eta))\cdot \tilde{n}(\eta)d\eta\\
&J_4=-3L(t)\e{3}\int_{S\e{1}}(\da{}\tilde{\kappa}(\eta))\e{2}\da{} u_0(\tilde{\Gamma}(\eta))\cdot\tilde{\tau}(\eta)d\eta\\
&J_5=-2L(t)\e{4}\int_{S\e{1}}(\tilde{\kappa}(\eta))\e{2}\da{}\tilde{\kappa}(\eta)\da{}u_0(\tilde{\Gamma}(\eta))\cdot \tilde{n}(\eta)d\eta\\
&J_6=-4L(t)\e{5}\int_{S\e{1}}(\tilde{\kappa}(\eta))\e{4}\da{} u_0(\tilde{\Gamma}(\eta))\cdot\tilde{\tau}(\eta)d\eta\\
&J_7=-L(t)\e{3}\int_{S\e{1}}\da{}\tilde{\kappa}(\eta)\da{2}\tilde{\kappa}(\eta)u_0(\tilde{\Gamma}(\eta))\cdot\tilde{\tau}(\eta)d\eta\\
&J_8=-2L(t)\e{5}\int_{S\e{1}}(\tilde{\kappa}(\eta))\e{3}\da{}\tilde{\kappa}(\eta)u_0(\tilde{\Gamma})\cdot \tilde{\tau}(\eta)d\eta\\
&J_9=L(t)\e{4}\int_{S\e{1}}\da{}\tilde{\kappa}(\eta)\da{2}\tilde{\kappa}(\eta)\tilde{\psi}(\eta)d\eta\\
&J_{10}=2L(t)\e{6}\int_{S\e{1}}(\tilde{\kappa}(\eta))\e{3}\da{}\tilde{\kappa}(\eta)\tilde{\psi}(\eta)d\eta\\
&J_{11}=L(t)\e{3}\int_{S\e{1}}[3(\da{}\tilde{\kappa}(\eta))\e{2}-3L(t)\e{2}(\tilde{\kappa}(\eta))\e{4}+\eta\da{2}\tilde{\kappa}(\eta)\da{}\tilde{\kappa}(\eta)-2L(t)\e{2}\eta(\tilde{\kappa}(\eta))\e{3}\da{}\tilde{\kappa}(\eta)]d\eta
\end{align*}
Notice that using that $\da{}\tilde{\kappa}\da{2}\tilde{\kappa}=\frac{1}{2}\da{}((\da{}\tilde{\kappa})\e{2})$ , $\tilde{\kappa}\e{3}\da{}\tilde{\kappa}=\frac{1}{4}\da{}(\tilde{\kappa}\e{4})$ and integrating by parts in $J_7$, $J_9$ and $J_{11}$ we obtain that
\begin{align*}
&J_7+J_9=-\frac{1}{6}J_4-\frac{L(t)}{2}\int_{S\e{1}}\tilde{\kappa}(\eta)(\da{}\tilde{\kappa}(\eta))\e{2}u_0(\tilde{\Gamma}(\eta))\cdot\tilde{n}(\eta)d\eta\\
&J_{11}=L(t)\e{3}\int_{S\e{1}}[\frac{5}{2}(\da{}\tilde{\kappa}(\eta))\e{2}-\frac{5}{2}L(t)\e{2}(\tilde{\kappa}(\eta))\e{4}]d\eta
\end{align*}
Using \eqref{cotaderiv2u0L2} we can estimate $J_2\leq\est{3}$ and integrating by parts in $J_3$ we obtain
\begin{align*}
J_3=-6L\e{4}\int_{S\e{1}}(\tilde{\kappa}(\eta))\e{2}\da{}\tilde{\kappa}(\eta)\da{}u_0(\tilde{\Gamma})\cdot\tilde{n}(\eta)d\eta+2L\e{4}\int_{S\e{1}}(\tilde{\kappa}(\eta))\e{4}\da{}u_0(\tilde{\Gamma})\cdot\tilde{\tau}(\eta)d\eta
\end{align*}
Using \eqref{cotapsi} we can estimate $J_{10}\leq\est{3}$. Using Lemma \ref{lemu0galdi} and Lemma \ref{lemderivu0} all $J_i$ for $j=2,3,5,6,7,8,9,10,11$ can be estimated by $\est{3}$.

Therefore, it then only remains $J_1$ and $J_4$. 

Let us start with $J_1$. Since $u_0(\tilde{\Gamma}(\eta),t)=F+G+w$ we have that $\da{3}u_0(\tilde{\Gamma}(\eta))=\da{3}F+\da{3}G+\da{3}w$ therefore
\begin{align}\label{J1}
\frac{J_1}{L(t)\e{2}}=\int_{S\e{1}}\da{}\tilde{\kappa}(\eta)\da{3}F\cdot \tilde{n}(\eta)d\eta+\int_{S\e{1}}\da{}\tilde{\kappa}(\eta)\da{3}G\cdot \tilde{n}(\eta)d\eta+\int_{S\e{1}}\da{}\tilde{\kappa}(\eta)\da{3}w\cdot \tilde{n}(\eta)d\eta\equiv J_{1}\e{1}+J_{1}\e{2}+J_{1}\e{3}
\end{align}
Using \eqref{cotaw} we have
\begin{displaymath}
\norm{w}_{H\e{3}(S\e{1})}\leq C\norm{U-\bar{u}_0}_{H\e{\frac{1}{2}}(\partial D)}
\end{displaymath}
Therefore, 
\begin{equation}\label{J13}
J_{1}\e{3}\leq C\norm{\Ftil}_{L\e{\infty}(S\e{1})}\norm{\tilde{\Gamma}}_{H\e{3}(S\e{1})}\leq\est{3}
\end{equation}

Now in order to deal with the term $J_{1}\e{1}$ we can write it as follows
\begin{align*}
J_{1}\e{1}&=-\frac{L}{2\pi\mu}\int_{S\e{1}}\int_{S\e{1}}\da{} \tilde{\kappa}(\eta)\da{2}\tilde{\kappa}(\eta-r)\frac{\Delta\tilde{\Gamma}\cdot\tilde{\tau}(\eta)}{\abs{\Delta\tilde{\Gamma}}\e{2}}\tilde{n}(\eta-r)\cdot \tilde{n}(\eta)dr d\eta + l.o.t.\\
&=-\frac{L}{2\pi\mu}\int_{S\e{1}}\int_{S\e{1}}\da{}\tilde{\kappa}(\alpha)\da{2}\tilde{\kappa}(\eta-r)\frac{\Delta\tilde{\Gamma}\cdot\tilde{\tau}(\eta)}{\abs{\Delta\tilde{\Gamma}}\e{2}}dr d\eta\\
&-\frac{L}{2\pi\mu}\int_{S\e{1}}\int_{S\e{1}}\da{}\tilde{\kappa}(\eta)\da{2}\tilde{\kappa}(\eta-r)\frac{\Delta\tilde{\Gamma}\cdot\tilde{\tau}(\eta)}{\abs{\Delta\tilde{\Gamma}}\e{2}}(\tilde{n}(\eta-r)-\tilde{n}(\eta))\cdot \tilde{n}(\eta)dr d\eta+l.o.t.\\
&\equiv J_{1}\e{11}+J_{1}\e{12}+l.o.t.
\end{align*}
The terms $l.o.t.$ are the terms obtained differentiating $\da{3}F$ that yield at most one derivative of $\tilde{\kappa}$. Their contribution can be estimated as $l.o.t.\leq\est{3}$ using elementary inequalities. We will skip the details. The term $J_{1}\e{11}$ can now be estimated using Lemma \ref{lemcotadificil} for $f=\da{}\tilde{\kappa}$ and using that $\norm{\da{}\tilde{\kappa}}_{L\e{2}}\e{2}\leq C\frac{1}{L\e{4}}\norm{\tilde{\Gamma}}\e{2}_{H\e{3}(S\e{1})}$, as follows
\begin{align*}
&J_{1}\e{11}\leq -\frac{1}{2\pi\mu}\norm{\Lambda\e{\frac{1}{2}}(\da{}\tilde{\kappa})}\e{2}_{L\e{2}}+C\norm{\Ftil}_{L\e{\infty}(S\e{1})}\norm{\tilde{\Gamma}}_{H\e{3}(S\e{1})}\e{2}+C\norm{\Ftil}_{L\e{\infty}(S\e{1})}\e{\frac{3}{2}}\norm{\tilde{\Gamma}}_{H\e{3}(S\e{1})}\e{3}+C\norm{\Ftil}_{L\e{\infty}(S\e{1})}\e{2}\norm{\tilde{\Gamma}}_{H\e{3}(S\e{1})}\e{4}\\
&\leq-\frac{1}{2\pi\mu}\norm{\Lambda\e{\frac{1}{2}}(\da{}\tilde{\kappa})}\e{2}_{L\e{2}}+\est{3}
\end{align*}

In order to estimate $J_{1}\e{12}$ we use again $\da{2}\tilde{\kappa}(\eta-r)=-\dr{}\da{}\tilde{\kappa}(\eta-r)$ as well as $\dr{}(\tilde{n}(\eta-r)-\tilde{n}(\eta))=L\tilde{\kappa}(\eta-r)\tilde{\tau}(\eta-r)$
\begin{align}\label{J1122}\nonumber
&J_{1}\e{12}=-\frac{L\e{2}}{2\pi\mu}\int_{S\e{1}}\int_{S\e{1}}\da{}\tilde{\kappa}(\eta)\da{}\tilde{\kappa}(\eta-r)\dx{}(\frac{\Delta\tilde{\Gamma}\cdot\tilde{\tau}(\eta)}{\abs{\Delta\tilde{\Gamma}}\e{2}})r\int_{S\e{1}}\tilde{\kappa}(\alpha)\tilde{\tau}(\alpha)ds\cdot \tilde{n}(\eta)dr d\eta\\
&-\frac{L\e{2}}{2\pi\mu}\int_{S\e{1}}\int_{S\e{1}}\da{}\tilde{\kappa}(\eta)\da{}\tilde{\kappa}(\eta-r)\frac{\Delta\tilde{\Gamma}\cdot\tilde{\tau}(\eta)}{\abs{\Delta\tilde{\Gamma}}\e{2}}\tilde{\kappa}(\eta-r)\tilde{\tau}(\eta-r)\cdot\tilde{n}(\eta) dr d\eta\equiv J_{1}\e{121}+J_{1}\e{122}
\end{align}

Using \eqref{Mi} and $\tilde{\tau}(\alpha)-\tilde{\tau}(\eta)=rL\int_{S\e{1}}\tilde{\kappa}(\phi)\tilde{n}(\phi)$ we can write $J_{1}\e{121}$ as
\begin{align*}
&J_{1}\e{121}=\sum_{i=1}\e{7}-\frac{L\e{2}}{2\pi\mu}\int_{S\e{1}}\int_{S\e{1}}\da{}\tilde{\kappa}(\eta)\da{}\tilde{\kappa}(\eta-r)M_i(\eta,r)r\int_{S\e{1}}\tilde{\kappa}(\alpha)\tilde{\tau}(\alpha)ds\cdot \tilde{n}(\eta)dr d\eta\\
&-\frac{L\e{3}}{2\pi\mu}\int_{S\e{1}}\int_{S\e{1}}\da{}\tilde{\kappa}(\eta)\da{}\tilde{\kappa}(\eta-r)\int_{S\e{1}}\int_{S\e{1}}\tilde{\kappa}(\alpha)\tilde{\kappa}(\phi)\tilde{n}(\phi)ds\cdot \tilde{n}(\eta)dr d\eta
\end{align*}
Therefore using \eqref{cotaMi} we derive the estimate $\abs{r M_i(\eta,r)}\leq\est{3}$ and thus
\begin{displaymath}
J_{1}\e{121}\leq\est{3}
\end{displaymath}
Using $\tilde{\tau}(\eta-r)-\tilde{\tau}(\eta)=rL\int_{S\e{1}}\tilde{\kappa}(\alpha)\tilde{n}(\alpha)ds$ we can estimate $J_{1}\e{122}$ as follows
\begin{displaymath}
J_{1}\e{122}\leq C\frac{1}{L\e{5}}\norm{\Ftil}_{L\e{\infty}(S\e{1})}\norm{\tilde{\Gamma}}_{H\e{3}(S\e{1})}\e{4}\leq\est{3}
\end{displaymath}
Therefore, we have
\begin{align}\label{J11}
L\e{2}J_{1}\e{1}\leq-\frac{L\e{2}}{2\pi\mu}\norm{\Lambda\e{\frac{1}{2}}\da{}\kappa}\e{2}_{L\e{2}  }+ \est{3}
\end{align}
The fact that the first term of the right hand side of this estimate is negative is the crucial step in the proof of well posedness of \eqref{reformGFBP1}-\eqref{reformGFBP5} that we derive in this paper.

In order to deal with $J_{1}\e{2}$ we use \eqref{cotaderiv3G} in Lemma \ref{lema2der}  and we can rewrite
\begin{align*}
&J_{1}\e{2}=-8L\e{5}\int_{S\e{1}}\da{}\tilde{\kappa}(\eta)(\tilde{\kappa}(\eta))\e{2}H(\tilde{\kappa})(\eta)d\eta+L\int_{S\e{1}}\int_{S\e{1}}\da{}\tilde{\kappa}(\eta)\tilde{\kappa}(r)Q(\eta,r)drd\eta
\end{align*}
where $H$ is the periodic Hilbert transform defined in \eqref{hilbertop}.
Using the $L\e{2}$-estimates for the Hilbert transform, Cauchy-Schwartz as well as the estimate of $Q$ in Lemma \ref{lema2der}, we obtain 
\begin{align}\nonumber
&J_{1}\e{2}\leq C\norm{\da{}\tilde{\kappa}}_{L\e{2}(S\e{1})}\norm{\tilde{\kappa}}\e{2}_{L\e{\infty}(S\e{1})}\norm{H(\tilde{\kappa})}_{L\e{2}(S\e{1})}+\frac{C}{L\e{3}}\norm{\da{}\tilde{\kappa}}_{L\e{2}(S\e{1})}\norm{\tilde{\kappa}}_{L\e{\infty}(S\e{1})}\norm{Q}_{L\e{2}(S\e{1})}\\\label{J12}
&\leq\est{3}
\end{align}

Combining \eqref{J13}, \eqref{J11} and \eqref{J12} we arrive at
\begin{align*}
J_1\leq\est{3}
\end{align*}
The most singular term in $J_4$ is
\begin{align*}
&J_4=-3L\int_{S\e{1}}(\da{}\tilde{\kappa}(\eta))\e{2}\da{}F\cdot\tilde{\tau}(\eta) d\eta-3L\int_{S\e{1}}(\da{}\tilde{\kappa}(\eta))\e{2}\da{}G\cdot\tilde{\tau}(\eta)d\eta+l.o.t.\equiv J_4\e{1}+J_4\e{2}+l.o.t.
\end{align*}
where $l.o.t.$ contains the contribution of $w$ in \eqref{wsolucionu0}. Therefore this term can be estimated easily by $\est{3}$.
Using \eqref{derivF} we obtain
\begin{align*}
&J_4\e{1}=\frac{3L\e{2}}{2\pi\mu}\int_{S\e{1}}\int_{S\e{1}}(\da{}\tilde{\kappa}(\eta))\e{2}\frac{\tilde{\kappa}(r)}{\eta-r}\tilde{n}(r)\cdot\tilde{\tau}(\eta)dr d\eta+\frac{3L\e{2}}{2\pi\mu}\int_{S\e{1}}\int_{S\e{1}}(\da{}\tilde{\kappa}(\eta))\e{2}\tilde{\kappa}(r)\textbf{A}_1(\eta,r)\cdot\da{}\tilde{\Gamma}(\eta)\tilde{n}(r)\cdot\tilde{\tau}(\eta)drd\eta\\
&\equiv J_4\e{11}+J_4\e{12}
\end{align*}
Using the fact that $\tilde{n}(r)\cdot\tilde{\tau}(\eta)=\tilde{n}(r)\cdot(\tilde{\tau}(\eta)-\tilde{\tau}(r))=\mathcal{O}(\abs{\eta-r})$ we can estimate $J_4\e{11}$ by $\est{3}$.
The term $J_4\e{12}$ can be estimated by $\est{3}$ using \eqref{cotaA1}.

Using \eqref{derivG} of Lemma \ref{lemderivG} in $J_4\e{2}$ we can deduce that
\begin{displaymath}
J_4\e{2}\leq C\norm{\Ftil}_{L\e{\infty}(S\e{1})}\e{2}\norm{\tilde{\Gamma}}_{H\e{3}(S\e{1})}\e{3}+C\norm{\Ftil}_{L\e{\infty}(S\e{1})}\e{\frac{5}{2}}\norm{\tilde{\Gamma}}_{H\e{3}(S\e{1})}\e{3}\leq\est{3}
\end{displaymath}

Therefore, 
\begin{equation}\label{J4}
J_4\leq\est{3}
\end{equation}
Collecting all the estimates for the terms $J_i$ for $i=1,\cdots,11$, we obtain 
\begin{equation}\label{cotaevolH3}
\frac{d}{dt}\norm{\da{3}\tilde{\Gamma}}_{L\e{2}(S\e{1})  }\leq\est{3}
\end{equation}

Finally, we need to derive that 
\begin{equation}\label{cotasenF}
\frac{d}{dt}\norm{\Ftil}_{L\e{\infty}(S\e{1})  }\leq\est{3}.
\end{equation}
 To do that, we take $p>1$ and we have
\begin{align*}
\frac{d}{dt}\norm{\Ftil}_{L\e{p}  }\e{p}(t)=\frac{d}{dt}\int_\R\int_\R(\frac{r\e{2}}{\abs{\Delta\tilde{\Gamma}}\e{2}})\e{p}dr d\eta=-2p\int_\R\int_\R\frac{r\e{2p+1}}{\abs{\Delta\tilde{\Gamma}}\e{2p+2}}\int_{S\e{1}}\da{}\tilde{\Gamma}(\eta)ds\cdot\Delta\tilde{\Gamma}_t dr d\eta
\end{align*}
Since we can write 
\begin{align*}
&\Delta\tilde{\Gamma}_t=(u_0(\tilde{\Gamma}(\eta))\cdot \tilde{n}(\eta)-u_0(\tilde{\Gamma}(\eta-r))\cdot \tilde{n}(\eta-r))\tilde{n}(\eta-r)+u_0(\tilde{\Gamma}(\eta))\cdot \tilde{n}(\eta-r)(\tilde{n}(\eta)-\tilde{n}(\eta-r))\\
&+(\tilde{\psi}(\eta)-\tilde{\psi}(\eta-r))\tilde{\tau}(\eta-r)+\tilde{\psi}(\eta)(\tilde{\tau}(\eta)-\tilde{\tau}(\eta-r)))+\partial_t L(t)(\eta(\tilde{\tau}(\eta)-\tilde{\tau}(\eta-r))+r\tilde{\tau}(\eta-r))\\
&\equiv I_1+I_2+I_3+I_4+I_5
\end{align*}
we have
\begin{align*}
\frac{d}{dt}\norm{\Ftil}_{L\e{p}  }\e{p}(t)=\sum_{i=1}\e{5}-2p\int_\R\int_\R\frac{r\e{2p+2}}{\abs{\Delta\tilde{\Gamma}}\e{2p+2}}\int_{S\e{1}}\da{}\tilde{\Gamma}(\eta)ds\cdot \frac{I_i}{r} dr d\eta
\end{align*}
Using the fact that $f(\eta)-f(\eta-r)=r\int_{0}\e{1}\da{}f(\alpha)ds$ and the same techniques as for the norm $\norm{\tilde{\Gamma}}_{H\e{3}(S\e{1})  }$ we can see that 
\begin{displaymath}
\norm{\frac{I_{i}}{r}}_{L\e{\infty}(S\e{1})  }\leq\est{3}
\end{displaymath}
Therefore, 
\begin{displaymath}
\abs{\frac{d}{dt}\norm{\Ftil}_{L\e{p}(S\e{1}) }\e{p}(t)}\leq p\est{3}\norm{\Ftil}_{L\e{\infty}(S\e{1})  }\norm{\Ftil}_{L\e{p}(S\e{1})}\e{p}
\end{displaymath}
and 
\begin{displaymath}
\abs{\frac{d}{dt}\norm{\Ftil}_{L\e{p}(S\e{1})}(t)}\leq \est{3}\norm{\Ftil}_{L\e{p}(S\e{1})}
\end{displaymath}
Integrating on the interval $[t,t+h]$, $h>0$ and taking the limit as $p\to\infty$,
\begin{align}\label{cotaevolF}
&\abs{\norm{\Ftil}_{L\e{\infty}(S\e{1})  }(t+h)-\norm{\Ftil}_{L\e{\infty}(S\e{1})}(t)}\leq\int_t\e{t+h}\est{3}(s)\norm{\Ftil}_{L\e{\infty}(S\e{1})}(s)ds
\end{align}
Therefore, $\norm{\Ftil}_{L\e{\infty}(S\e{1})}(\cdot)\in W\e{1,\infty}([0,T))$. Then, $\frac{d}{dt}\norm{\Ftil}_{L\e{\infty}(S\e{1})}(t)$ is differentiable for a.e. $t\in[0,T]$. Moreover, using \eqref{cotaevolF} we have
\begin{align*}
&\frac{d}{dt}\norm{\Ftil}_{L\e{\infty}(S\e{1})  }(t)=\lim_{h\to 0}\frac{\norm{\Ftil}_{L\e{\infty}(S\e{1})  }(t+h)-\norm{\Ftil}_{L\e{\infty}(S\e{1})  }(t)}{h}\\
&\leq\lim_{h\to 0}\frac{\int_t\e{t+h}\est{3}(s)\norm{\Ftil}_{L\e{\infty}(S\e{1})}(s)ds}{h}\\
&\leq \est{3}\norm{\Ftil}_{L\e{\infty}(S\e{1})  }
\end{align*}
for a.e. $t\in[0,T]$.

Using that $\est{3}\leq C\exp(E[\tilde{\Gamma}](t))$ and combining \eqref{cotasenF}, \eqref{cotaevolL2} and \eqref{cotaevolH3} we obtain
\begin{equation}\label{cotaenergia}
\frac{d}{dt}E[\tilde{\Gamma}](t)\leq C\exp(E[\tilde{\Gamma}](t))\quad\textit{a.e.}\quad t\in[0,T]
\end{equation}

Therefore,
\begin{displaymath}
E[\tilde{\Gamma}](t)\leq -\log(\exp(-CE[\tilde{\Gamma}](0))-C\e{2}t))\quad\textit{a.e.}\quad t\in[0,T]
\end{displaymath}
as long as $T<\frac{\exp(-CE[\tilde{\Gamma}](0))}{C\e{2}}$.

To finish the proof we use a classical regularization procedure. To this end we replace \eqref{reformGFBP1}-\eqref{reformGFBP5} by the following problem
\begin{align}\label{regulgam}
&\partial_t\tilde{\Gamma}\e{\varepsilon}(\eta,t)=\mathcal{J}_{\varepsilon}(u\e{\varepsilon}_0(\tilde{\Gamma}(\eta,t),t)\cdot \mathcal{J}_{\varepsilon}\tilde{n}\e{\varepsilon}(\eta,t))\mathcal{J}_{\varepsilon}\tilde{n}\e{\varepsilon}(\eta,t)]+\mathcal{J}_{\varepsilon}[\tilde{\psi}\e{\varepsilon}(\eta,t)\mathcal{J}_{\varepsilon}\tilde{\tau}\e{\varepsilon}(\eta,t)]+\mathcal{J}_{\varepsilon}[(\partial_t L\e{\varepsilon}(t))\eta\mathcal{J}_{\varepsilon}\tilde{\tau}\e{\varepsilon}(\eta,t)]\\\label{regulL}
&\partial_t L\e{\varepsilon}(t)=-L\e{\varepsilon}(t)\int_{S\e{1}}\tilde{\kappa}\e{\varepsilon}(\eta,t)u\e{\varepsilon}_0(\tilde{\Gamma}\e{\varepsilon}(\eta,t),t)\cdot\tilde{n}\e{\varepsilon}(\eta,t)d\eta
\end{align}
where
\begin{align}
&\tilde{\psi}\e{\varepsilon}(\eta,t)=\int_0\e{\eta}\tilde{\kappa}\e{\varepsilon}(r,t)u\e{\varepsilon}_0(\tilde{\Gamma}(r,t),t)\cdot\tilde{n}\e{\varepsilon}(r,t)dr\\\nonumber
&u\e{\varepsilon}_0(\tilde{\Gamma}\e{\varepsilon
})(\eta,t)=\bar{u}\e{\varepsilon}_0(\tilde{\Gamma}\e{\varepsilon})(\eta,t)+w\e{\varepsilon}(\eta,t)=-\frac{L\e{\varepsilon}(t)}{2\pi\mu}\int_{S\e{1}}\mathcal{J}_{\varepsilon}\tilde{\kappa}\e{\varepsilon}(r,t)\log(\abs{\tilde{\Gamma}\e{\varepsilon}(\eta,t)-\tilde{\Gamma}\e{\varepsilon}(r,t)}\e{2}+\varepsilon\e{2}(L\e{\varepsilon}(t))\e{2})\tilde{n}\e{\varepsilon}(r,t)dr\\\label{u0epsilon}
&+\frac{L\e{\varepsilon}(t)}{2\pi\mu}\int_{S\e{1}}\tilde{\kappa}\e{\varepsilon}(r,t)\frac{\Delta\tilde{\Gamma}\e{\varepsilon}\cdot \tilde{n}\e{\varepsilon}(r,t)}{\abs{\Delta\tilde{\Gamma}\e{\varepsilon}}\e{2}+\varepsilon\e{2}(L\e{\varepsilon}(t))\e{2}}\Delta\tilde{\Gamma}\e{\varepsilon} dr+w\e{\varepsilon}(\eta,t)\equiv F_\varepsilon+G_\varepsilon+w\e{\varepsilon}
\end{align} 
with initial values
\begin{equation}\label{regulini}
\tilde{\Gamma}\e{\varepsilon}(\eta,0)=\mathcal{J}_{\varepsilon}\tilde{\Gamma}_{0},\quad L\e{\varepsilon}(0)=L_0
\end{equation} 
where $\mathcal{J}_{\varepsilon}(f)=\rho_\varepsilon *f$ with $\rho_\varepsilon(\eta)=\frac{1}{\varepsilon}\rho(\frac{\eta}{\varepsilon})$, $\rho(\abs{\eta})\in C_c\e{\infty}(\R)$, $\rho(\eta)\geq 0$ and $\int_{\R}\rho(\eta)d\eta=1$ is a standard mollifier.

The well posedness of \eqref{regulgam}-\eqref{regulini} for each $\varepsilon>0$ is standard, due to the smoothness of the right hand side of these equations, and it can be proved applying Picard’s Theorem after reformulating the problem as a fixed point problem for a contractive operator. We can now obtain uniform estimates for $E(t)$ defined in \eqref{energia} mimicking the arguments made before for $\varepsilon=0$. These arguments can now be made in a rigorous manner due to the smoothness of the functions involved. In particular, the estimate \eqref{cotaenergia} holds, with $C$ independent on $\varepsilon$, as long as the solution  of \eqref{regulgam}-\eqref{regulini} is defined. The fact that $C$ is independent on $\varepsilon$ requires to estimate carefully the terms $F_\varepsilon$ and $G_\varepsilon$ in \eqref{u0epsilon}. Notice that the regularization of the problem \eqref{regulgam}-\eqref{regulini} has been made in a self-adjoint manner, something that it is convenient in order to estimate the quadratic functionals $\norm{\tilde{\Gamma}_\varepsilon}\e{2}_{L\e{2}(S\e{1})}$ and $\norm{\da{3}\tilde{\Gamma}_\varepsilon}\e{2}_{L\e{2}(S\e{1})}$. More precisely, computing $\frac{d}{dt}(\norm{\tilde{\Gamma}_\varepsilon}\e{2}_{L\e{2}(S\e{1})}+\norm{\da{3}\tilde{\Gamma}_\varepsilon}\e{2}_{L\e{2}(S\e{1})})$ we obtain
\begin{equation}\label{cotaepsilonenergia}
\frac{d}{dt}(\norm{\tilde{\Gamma}_\varepsilon}\e{2}_{L\e{2}(S\e{1})}+\norm{\da{3}\tilde{\Gamma}_\varepsilon}\e{2}_{L\e{2}(S\e{1})})\leq-C\norm{\Lambda_\varepsilon\e{\frac{1}{2}}(\mathcal{J}_\varepsilon\tilde{\Gamma}_\varepsilon)}\e{2}_{L\e{2}(S\e{1})}+C\exp(\norm{\mathcal{F}(\tilde{\Gamma}_\varepsilon)}_{L\e{\infty}(S\e{1})}\e{2}+b\norm{\mathcal{J}_\varepsilon\tilde{\Gamma}_\varepsilon}_{H\e{3}(S\e{1})}\e{2})
\end{equation}
where $C>0$ is independent on $\varepsilon$ and $\Lambda_\varepsilon$ is the regularized fractional Laplacian given by
\begin{displaymath}
\Lambda_\varepsilon(f)=\pi PV\int_{S\e{1}}\frac{f(x)-f(x-y)}{\sin\e{2}(\pi y)+\varepsilon\e{2}}
\end{displaymath} 
 and where we define $\Lambda_\varepsilon\e{\frac{1}{2}}$ as the unique operator that $(\Lambda_\varepsilon\e{\frac{1}{2}})\e{2}=\Lambda_\varepsilon$. The constant $b>0$ can be chosen arbitrary small, although the constant $C$ in front of the exponential depends on $b$. The first term in the right hand side on \eqref{cotaepsilonenergia} is part of the contribution due to the term $F_\varepsilon$ in \eqref{u0epsilon}. More precisely, we need to estimate the term $\int_{S\e{1}}\mathcal{J_\varepsilon}\da{3}\tilde{\Gamma}_\varepsilon(\eta)\da{3}F_\varepsilon(\eta)d\eta$. This term can be estimated as the term $J_1\e{1}$ in \eqref{J1}. To this end we need to prove a result analogous to Lemma \ref{lemcotadificil} where $\frac{\Delta\tilde{\Gamma}\cdot\tilde{\tau}}{\abs{\Delta\tilde{\Gamma}}\e{2}}$ must be replaced by $\frac{\Delta\tilde{\Gamma}_\varepsilon\cdot\tilde{\tau}_\varepsilon}{\abs{\Delta\tilde{\Gamma}
_\varepsilon}\e{2}+\varepsilon\e{2}L_\varepsilon\e{2}}$. This term appears differentiating $\log(\abs{\tilde{\Gamma}\e{\varepsilon}(\eta)-\tilde{\Gamma}\e{\varepsilon}(r)}\e{2}+\varepsilon\e{2}(L\e{\varepsilon}(t))\e{2})$ in $F_\varepsilon$. It turns out that a modified version of Lemma \ref{lemcotadificil} holds with $\Lambda$ replaced by $\Lambda_\varepsilon$. The remaining terms that arise in $\frac{d}{dt}(\norm{\tilde{\Gamma}_\varepsilon}\e{2}_{L\e{2}(S\e{1})}+\norm{\da{3}\tilde{\Gamma}_\varepsilon}\e{2}_{L\e{2}(S\e{1})})$ can be estimated with tedious, but straightforward adaptations, in the arguments that yield \eqref{cotaevolL2} and \eqref{cotaevolH3}. We just remark that in this arguments we need to use, instead of the the usual Hilbert transform in \eqref{hilbertop}, the regularized Hilbert transform given by 
 
\begin{equation*}
H_\varepsilon(f)=PV\int_S\e{1}\frac{f(r)\sin(\pi(\eta-r))}{\tan\e{2}(\pi(\eta-r))+\varepsilon\e{2}}dr
\end{equation*}

Using the well-known fact that $\norm{\mathcal{J}_\varepsilon\tilde{\Gamma}_\varepsilon
}_{H\e{3}(S\e{1})}\leq C\norm{\tilde{\Gamma}_\varepsilon}_{H\e{3}(S\e{1})}$ (c.f. \cite{majda}) it then follows that 
\begin{equation}\label{cotaevolGamregu}
\frac{d}{dt}(\norm{\tilde{\Gamma}_\varepsilon}\e{2}_{L\e{2}(S\e{1})}+\norm{\da{3}\tilde{\Gamma}_\varepsilon}\e{2}_{L\e{2}(S\e{1})})\leq C\exp(\norm{\mathcal{F}(\tilde{\Gamma}_\varepsilon)}_{L\e{\infty}(S\e{1})}\e{2}+\norm{\tilde{\Gamma}_\varepsilon}\e{2}_{L\e{2}(S\e{1})}+\norm{\da{3}\tilde{\Gamma}_\varepsilon}\e{2}_{L\e{2}(S\e{1})})
\end{equation}

On the other hand arguing exactly as in the proof of \eqref{cotaevolF} we obtain
\begin{equation}\label{cotaevolFregu}
\frac{d}{dt}\norm{\mathcal{F}(\tilde{\Gamma}_\varepsilon)}_{L\e{\infty}(S\e{1})}\e{2}\leq C\exp(\norm{\mathcal{F}(\tilde{\Gamma}_\varepsilon)}_{L\e{\infty}(S\e{1})}\e{2}+\norm{\tilde{\Gamma}_\varepsilon}\e{2}_{L\e{2}(S\e{1})}+\norm{\da{3}\tilde{\Gamma}_\varepsilon}\e{2}_{L\e{2}(S\e{1})}).
\end{equation}
Adding \eqref{cotaevolGamregu} and \eqref{cotaevolFregu} and applying a Grönwall argument  

\begin{equation}\label{cotagronwall}
\norm{\mathcal{F}(\tilde{\Gamma}_\varepsilon)}_{L\e{\infty}(S\e{1})}\e{2}+\norm{\tilde{\Gamma}_\varepsilon}\e{2}_{L\e{2}(S\e{1})}+\norm{\da{3}\tilde{\Gamma}_\varepsilon}\e{2}_{L\e{2}(S\e{1})}\leq C(T)
\end{equation}
in $t\in[0,T]$ with $C(T)>0$ and $T>0$ independent of $\varepsilon$. We now claim that
\begin{equation}\label{esttiempo}
\sup_{0\leq t\leq T}(\norm{\partial_t\tilde{\Gamma}_\varepsilon(\cdot,t)}_{H\e{2}(S\e{1})}+\abs{\partial_tL_\varepsilon}(t))\leq C(T)
\end{equation}
with $C(T)$ independent of $\varepsilon$. The estimate of the first term in \eqref{esttiempo} can be obtained differentiating twice \eqref{regulgam}. The more singular term in $\da{2}u_0(\tilde{\Gamma}_\varepsilon)$ is $\da{2}F_\varepsilon$. Using  \eqref{u0epsilon} and doing an integration by parts we obtain
\begin{displaymath}
\da{2}F_\varepsilon=-\frac{L_\varepsilon(t)}{2\pi\mu}\int_{S\e{1}}\da{}\mathcal{J}_\varepsilon\tilde{\kappa}_\varepsilon(\eta-r)\frac{\Delta\tilde{\Gamma}_\varepsilon\cdot\tilde{\tau}_\varepsilon(\eta)}{\abs{\Delta\tilde{\Gamma}_\varepsilon}\e{2}+\varepsilon\e{2}}\tilde{n}_\varepsilon(\eta-r)dr+l.o.t.
\end{displaymath}
Approximating the integral operator as a Hilbert transform as in the previous arguments we obtain
\begin{displaymath}
\norm{\da{2}u_0(\tilde{\Gamma}_\varepsilon)}_{L\e{2}(S\e{1})}\leq C\norm{\tilde{\Gamma}_\varepsilon}_{H\e{3}(S\e{1})}
\end{displaymath}
The the contribution of the term $G_\varepsilon$ in  \eqref{u0epsilon} can be estimated in a similar manner. The term $w_\varepsilon$ in \eqref{u0epsilon} can be estimated using that $U\in W\e{1,\infty}([0,T],H\e{\frac{1}{2}}(\partial D))$. The remaining terms in \eqref{regulgam} can be bounded in analogous way. This yields the estimate for $\norm{\partial_t\tilde{\Gamma}_\varepsilon(\cdot,t)}_{H\e{2}(S\e{1})}$ in \eqref{esttiempo}. The estimate for $\abs{\partial_tL_\varepsilon}(t)$ follows from \eqref{regulL} and \eqref{cotagronwall}.

A standard compactness argument shows that there exist a sequence $\{\varepsilon_n\}_{n\in\N}$ with $\varepsilon_n\to 0$,\newline $\tilde{\Gamma}\in W\e{1,\infty}([0,T],H\e{2}(S\e{1}))\cap L\e{\infty}([0,T],H\e{3}(S\e{1}))$ and $L\in W\e{1,\infty}([0,T])$ such that
\begin{align*}
&\tilde{\Gamma}_{\varepsilon_n}\to \tilde{\Gamma}\quad\textit{in}\quad L\e{2}([0,T],H\e{2}(S\e{1})),\\
&\tilde{\Gamma}_{\varepsilon_n}\rightharpoonup  \tilde{\Gamma}\quad\textit{in}\quad W\e{1,2}([0,T],H\e{2}(S\e{1})),\\
&\tilde{\Gamma}_{\varepsilon_n}(\eta,t)\to\tilde{\Gamma}(\eta,t)\quad\textit{for a.e.}\quad(\eta,t)\in \tilde{\mathcal{G}}_T,\\
&L_{\varepsilon_n}\to L\quad\textit{in}\quad L\e{2}([0,T]),\\
&L_{\varepsilon_n}(t)\to L(t)\quad\textit{for a.e.}\quad t\in[0,T]
\end{align*}

Taking the limit in \eqref{regulgam}-\eqref{regulini} we obtain that $(\tilde{\Gamma},L)$ satisfy \eqref{reformGFBP1}-\eqref{reformGFBP5} a.e. $(\eta,t)\in\tilde{\mathcal{G}}_T$. Then the existence part of the Theorem follows.

\vspace{0.2cm}

$(ii)$\textit{Uniqueness}
 
\vspace{0.2cm}

In order to show uniqueness we suppose that we have two solutions of \eqref{reformGFBP1}-\eqref{reformGFBP5}, $(\tilde{\Gamma}_1,L_1(t)),(\tilde{\Gamma}_2,L_2(t))\in W\e{1,\infty}([0,T],H\e{2}(S\e{1}))\cap L\e{\infty}([0,T],H\e{3}(S\e{1}))\times W\e{1,\infty}([0,T])$. We define $f=\tilde{\Gamma}_1-\tilde{\Gamma}_2$. Our goal is to prove the following inequality
\begin{equation}\label{estimunic}
\frac{d}{dt}(\norm{f}_{L\e{2}(S\e{1})}\e{2}+\norm{\da{2}f}\e{2}_{L\e{2}(S\e{1})}+\abs{L_1-L_2}\e{2})\leq C(\norm{f}_{L\e{2}(S\e{1})}\e{2}+\norm{\da{2}f}\e{2}_{L\e{2}(S\e{1})}+\abs{L_1-L_2}\e{2})
\end{equation}
where $C$ is a constant that only depends on the energy defined in \eqref{energia} for $\tilde{\Gamma}_1$ and $\tilde{\Gamma}_2$, that is, $E[\tilde{\Gamma}_1](t)$ and $E[\tilde{\Gamma}_2](t)$.

In order to prove \eqref{estimunic} we decompose the evolution of $\norm{\da{2}f}_{L\e{2}(S\e{1})}\e{2}$, using \eqref{reformGFBP1}, as follows
\begin{align}\label{normh2f}\nonumber
&\frac{1}{2}\frac{d}{dt}\norm{\da{2}f}_{L\e{2}(S\e{1})}\e{2}=\int_{S\e{1}}\da{2}f\cdot\da{2}(f_t)d\eta=\int_{S\e{1}}\da{2}f\cdot\da{2}\big(((u_0(\tilde{\Gamma}_1)-u_0(\tilde{\Gamma}_2))\cdot \tilde{n}_1)\tilde{n}_1\big)d\eta\\\nonumber
&+\int_{S\e{1}}\da{2}f\cdot\da{2}\big((u_0(\tilde{\Gamma}_2)\cdot(\tilde{n}_1-\tilde{n}_2))\tilde{n}_1\big)d\eta+\int_{S\e{1}}\da{2}f\cdot\da{2}\big((u_0(\tilde{\Gamma}_2)\cdot \tilde{n}_2)(\tilde{n}_1-\tilde{n}_2)\big)d\eta+\int_{S\e{1}}\da{2}f\cdot\da{2}\big((\psi_1-\psi_2)\tilde{\tau}_1\big)d\eta\\\nonumber
&+\int_{S\e{1}}\da{2}f\cdot\da{2}\big(\psi_2(\tilde{\tau}_1-\tilde{\tau}_2)\big)d\eta+\partial_t(L_1-L_2)\int_{S\e{1}}\da{2}f\cdot\da{2}\big(\eta\tilde{\tau}_1\big)d\eta+\partial_t L_2\int_{S\e{1}}\da{2}f\cdot\da{2}\big(\eta(\tilde{\tau}_1-\tilde{\tau}_2)\big)d\eta\\
&=\int_{S\e{1}}(\da{2}f\cdot \tilde{n}_1)\big(\da{2}(u_0(\tilde{\Gamma}_1)-u_0(\tilde{\Gamma}_2))\cdot \tilde{n}_1\big)d\eta +l.o.t.\equiv I+l.o.t.
\end{align}
where we have denoted 
\begin{equation}\label{tauini}
\tilde{\tau}_i=\frac{1}{L_i(t)}\da{}\tilde{\Gamma}_i,\quad \tilde{n}_i=\frac{1}{L_i(t)}(\da{}\tilde{\Gamma}_i)\e{\perp}\quad\textit{for}\quad i=1,2
\end{equation}
and $\psi_i$ is the function defined by means of \eqref{reformGFBP2} for $\tilde{\Gamma}_i$ for $i=1,2$. We notice that since $\tilde{\Gamma}_1(\cdot,t),\tilde{\Gamma}_2(\cdot,t)\in H\e{3}(S\e{1})$ for $t\in[0,T]$, all terms in \eqref{normh2f} are well defined. In particular, due to \eqref{u0gamma} we have $u_0(\tilde{\Gamma}_i(\cdot,t))\in H\e{2}(S\e{1})$ for $t\in[0,T]$ and $i=1,2$. Indeed the term $F$ on the right hand side of \eqref{u0gamma} is in $H\e{2}(S\e{1})$ due to the classical theory of singular integrals (c.f. \cite{stein}) as well as the fact that $\tilde{\kappa}\in H\e{1}(S\e{1})$. The term $G$ in \eqref{u0gamma} can be differentiated twice due to \eqref{derivG} in  Lemma \ref{lemderivG} and $\tilde{\kappa}\in H\e{1}(S\e{1})$.
The remaining terms in \eqref{normh2f} contain at most two derivatives of $\psi_i$, $\tilde{\tau}_i$ and $\tilde{n}_i$ that can be estimated in terms of $\tilde{\Gamma}_i(\cdot,t)\in H\e{3}(S\e{1})$ due to \eqref{reformGFBP2} and \eqref{tauini}.

We will give now details on how to estimate the term $I$ which is the most singular one in \eqref{normh2f}, the rest of the terms can be estimated following the same ideas as in the previous energy estimates. 

Using \eqref{u0gamma} we can obtain that
\begin{displaymath}
\da{2}u_0(\tilde{\Gamma})(\eta)=-\frac{L}{2\pi\mu}\int_{S\e{1}}\da{}\tilde{\kappa}(\eta-r)\frac{\Delta\tilde{\Gamma}\cdot\tilde{\tau}(\eta)}{\abs{\Delta\tilde{\Gamma}}\e{2}}n(\eta-r)dr  + l.o.t.
\end{displaymath}
therefore
\begin{align*}
&I=-\frac{(L_1-L_2)}{2\pi\mu}\int_{S\e{1}}\int_{S\e{1}}\da{2}f(\eta)\cdot \tilde{n}_1(\eta)\da{}\tilde{\kappa}_1(\eta-r)\frac{\Delta\tilde{\Gamma}_1\cdot\tilde{\tau}_1(\eta)}{\abs{\Delta\tilde{\Gamma}_1}\e{2}}\tilde{n}_1(\eta-r)\cdot \tilde{n}_1(\eta)drd\eta\\
&-\frac{L_2}{2\pi\mu}\int_{S\e{1}}\int_{S\e{1}}\da{2}f(\eta)\cdot \tilde{n}_1(\eta)\da{}(\tilde{\kappa}_1-\tilde{\kappa}_2)(\eta-r)\frac{\Delta\tilde{\Gamma}_1\cdot\tilde{\tau}_1(\eta)}{\abs{\Delta\tilde{\Gamma}_1}\e{2}}\tilde{n}_1(\eta-r)\cdot \tilde{n}_1(\eta)drd\eta\\
&-\frac{L_2}{2\pi\mu}\int_{S\e{1}}\int_{S\e{1}}\da{2}f(\eta)\cdot \tilde{n}_1(\eta)\da{}\tilde{\kappa}_2(\eta-r)\frac{\Delta f\cdot\tilde{\tau}_1(\eta)}{\abs{\Delta\tilde{\Gamma}_1}\e{2}}\tilde{n}_1(\eta-r)\cdot \tilde{n}_1(\eta)drd\eta\\
&-\frac{L_2}{2\pi\mu}\int_{S\e{1}}\int_{S\e{1}}\da{2}f(\eta)\cdot \tilde{n}_1(\eta)\da{}\tilde{\kappa}_2(\eta-r)\frac{\Delta \tilde{\Gamma}_2\cdot(\tilde{\tau}_1-\tilde{\tau}_2)(\eta)}{\abs{\Delta\tilde{\Gamma}_1}\e{2}}\tilde{n}_1(\eta-r)\cdot \tilde{n}_1(\eta)drd\eta\\
&-\frac{L_2}{2\pi\mu}\int_{S\e{1}}\int_{S\e{1}}\da{2}f(\eta)\cdot \tilde{n}_1(\eta)\da{}\tilde{\kappa}_2(\eta-r)\frac{\Delta\tilde{\Gamma}_2\cdot\tilde{\tau}_2(\eta)}{\abs{\Delta\tilde{\Gamma}_1}\e{2}}(\tilde{n}_1-\tilde{n}_2)(\eta-r)\cdot \tilde{n}_1(\eta)drd\eta\\
&-\frac{L_2}{2\pi\mu}\int_{S\e{1}}\int_{S\e{1}}\da{2}f(\eta)\cdot \tilde{n}_1(\eta)\da{}\tilde{\kappa}_2(\eta-r)\Delta\tilde{\Gamma}_2\cdot\tilde{\tau}_2(\eta)\tilde{n}_2(\eta-r)\cdot \tilde{n}_1(\eta)(\frac{1}{\abs{\Delta\tilde{\Gamma}_1}\e{2}}-\frac{1}{\abs{\Delta\tilde{\Gamma}_2}\e{2}})drd\eta +l.o.t.\equiv \sum_{i=1}\e{6}J_i+l.o.t.
\end{align*}
where $\tilde{\kappa}_i(\eta,t)=\frac{1}{L_i(t)\e{2}}\da{2}\tilde{\Gamma}_i(\eta,t)\cdot \tilde{n}_1(\eta,t)$ for $i=1,2$.

We estimate $J_1$ approximating $\frac{\Delta\tilde{\Gamma}_1\cdot\tilde{\tau}_1(\eta)}{\abs{\Delta\tilde{\Gamma}_1}\e{2}}$ as the sum of $\frac{1}{r L_1}$ and more regular terms. Then,
\begin{align*}
J_1=-\frac{L_1-L_2}{2\pi\mu}\int_{S\e{1}}\da{2}f(\eta)\cdot \tilde{n}_1(\eta)H(\da{}\tilde{\kappa}_1)(\eta)d\eta+J_{11}
\end{align*}
where $J_{11}\leq C\abs{L_1-L_2}\norm{f}_{H\e{2}}$. Using that $\tilde{\Gamma}_1\in H\e{3}(S\e{1})$, standard estimates for the Hilbert transform and Cauchy-Schwarz and Young's inequalities, we obtain
\begin{displaymath}
J_1\leq C\abs{L_1-L_2}\norm{\da{2}f}_{L\e{2}(S\e{1})}\norm{\da{}\tilde{\kappa}_1}_{L\e{2}(S\e{1})}+C\abs{L_1-L_2}\norm{f}_{H\e{2}}\leq C(\abs{L_1-L_2}\e{2}+\norm{f}_{H\e{2}(S\e{1})}\e{2})
\end{displaymath}
Using $M_2(\tilde{\Gamma}_1)$ in \eqref{M2} for $\tilde{\Gamma}_1$ and $\Delta f=r\int_0\e{1}\da{}f(\alpha)ds$ where $\alpha=\eta-r+sr$, we can estimate
\begin{align*}
J_3\leq-\frac{L_2}{2L_1\e{2}\pi\mu}\int_{S\e{1}}\da{2}f(\eta)\cdot \tilde{n}_1(\eta)H(\da{}\tilde{\kappa}_2)(\eta)\da{}f(\eta)\cdot\tilde{\tau}_1(\eta)d\eta + C\norm{f}\e{2}_{H\e{2}(S\e{1})}\norm{\da{}\tilde{\kappa}_2}_{L\e{2}}\norm{M_2(\tilde{\Gamma}_1)}_{L\e{\infty}(S\e{1})}\leq C\norm{f}\e{2}_{H\e{2}(S\e{1})}
\end{align*} 
The terms $J_4$ and $J_5$ can be estimated in an  analogous way. To deal with $J_6$ we just need to compute 
\begin{displaymath}
(\frac{1}{\abs{\Delta\tilde{\Gamma}_1}\e{2}}-\frac{1}{\abs{\Delta\tilde{\Gamma}_2}\e{2}})=-\frac{\mathcal{F}(\tilde{\Gamma}_2)}{\abs{\Delta\tilde{\Gamma}_1}\e{2}}\int_0\e{1}\da{}f(\alpha)\cdot(\da{}\tilde{\Gamma}_1(\alpha)+\da{}\tilde{\Gamma}_2(\alpha))ds.
\end{displaymath} 
Using again $M_2(\tilde{\Gamma}_1)$, we obtain that $J_6\leq C\norm{f}_{H\e{2}(S\e{1})}\e{2}$.

Finally, we estimate $J_2$ which is the most singular term. Using that $\tilde{n}_1(\eta-r)-\tilde{n}_1(\eta)=rL_1\int_{0}\e{1}\tilde{\kappa}_1(\eta-rs)\tilde{\tau}_1(\eta-rs)ds$ and $\Delta\tilde{\Gamma}_1=rL_1\int_0\e{1}\tilde{\tau}_1(\alpha)ds$ we can rewrite $J_2=J_{21}+J_{22}$ where
\begin{align*}
&J_{21}=-\frac{L_2}{2\pi\mu}\int_{S\e{1}}\int_{S\e{1}}\da{2}f(\eta)\cdot \tilde{n}_1(\eta)\da{}(\tilde{\kappa}_1-\tilde{\kappa}_2)(\eta-r)\frac{\Delta\tilde{\Gamma}_1\cdot\tilde{\tau}_1(\eta)}{\abs{\Delta\tilde{\Gamma}_1}\e{2}}drd\eta\\
&J_{22}=-\frac{L_2L_1\e{2}}{2\pi\mu}\int_{S\e{1}}\int_{S\e{1}}\da{2}f(\eta)\cdot \tilde{n}_1(\eta)\da{}(\tilde{\kappa}_1-\tilde{\kappa}_2)(\eta-r)\mathcal{F}(\tilde{\Gamma}_1)\int_{0}\e{1}\tilde{\kappa}_1(\alpha)\tilde{\tau}_1(\alpha)\cdot \tilde{n}_1(\eta)drd\eta
\end{align*}
In order to estimate $J_{22}$ we do integration by parts using that $\da{}\tilde{\kappa}_1(\eta-r)=-\partial_r\tilde{\kappa}_1(\eta-r)$ and we can proceed as in the term $J_1\e{122}$ in \eqref{J1122} in the energy estimates. Therefore, $J_{22}\leq C\norm{\da{2}f}_{L\e{2}(S\e{1})}\e{2}$. Finally, we can write $J_{21}$ as follows
\begin{displaymath}
J_{21}=-\frac{L_2}{2L_1\pi\mu}\int_{S\e{1}}\da{2}f(\eta)\cdot \tilde{n}_1(\eta)H(\da{}(\tilde{\kappa}_1-\tilde{\kappa}_2))(\eta)d\eta-\frac{L_2}{2\pi\mu}\int_{S\e{1}}\int_{S\e{1}}\da{2}f(\eta)\cdot \tilde{n}_1(\eta)\da{}(\tilde{\kappa}_1-\tilde{\kappa}_2)(\eta-r)A(\tilde{\Gamma}_1)drd\eta\equiv J_{21}\e{1}+J_{21}\e{2}
\end{displaymath}
where $A(\tilde{\Gamma}_1)=\frac{\Delta\tilde{\Gamma}_1\cdot\tilde{\tau}_1(\eta)}{\abs{\Delta\tilde{\Gamma}_1}\e{2}}-\frac{\pi\da{}\tilde{\Gamma}_1(\eta)\cdot\tilde{\tau}_1(\eta)}{\tan(\pi r)\abs{\da{}\tilde{\Gamma}_1(\eta)}\e{2}}$.

The term $J_{21}\e{2}$ can be estimated as the term $J_{22}$ using $\da{}(\tilde{\kappa_1}-\tilde{\kappa}_2)(\eta-r)=-\partial_r(\tilde{\kappa}_1-\tilde{\kappa}_2)(\eta-r)$ and integration by parts. Then using \eqref{operadorA} with $\tilde{\Gamma}_1$ instead of $\tilde{\Gamma}$  we obtain $\abs{\partial_r A(\tilde{\Gamma}_1)}\leq C E[\tilde{\Gamma}_1]$. Therefore
\begin{align*}
J_{21}\e{2}=-\frac{L_2}{2L_1\pi\mu}\int_{S\e{1}}\int_{S\e{1}}\da{2}f(\eta)\cdot \tilde{n}_1(\eta)(\tilde{\kappa}_1-\tilde{\kappa}_2)(\eta-r)\partial_r A(\tilde{\Gamma}_1)drd\eta\leq C\norm{f}_{H\e{2}(S\e{1})}\e{2}
\end{align*}
Using that $\Lambda=\da{}H$ where $H$ is the periodic Hilbert transform \eqref{hilbertop}, the fact that $\da{2}\tilde{\Gamma}_i\cdot\tilde{\tau}_i=0$ for $i=1,2$ and $$\tilde{\kappa}_1-\tilde{\kappa}_2=\frac{(L_2-L_1)(L_2\e{2}+L_1L_2+L_1\e{2})}{L_2\e{3}L_1\e{2}}\da{2}\tilde{\Gamma}_1\cdot\tilde{n}_1+\frac{L_1}{L_2\e{3}}\da{2}f\cdot \tilde{n}_1+\frac{1}{L_2\e{3}}\da{2}\tilde{\Gamma}_2\cdot(\da{}f)\e{\perp},$$ we can write $J_{21}\e{1}$ as
\begin{align*}
&J_{21}\e{1}=-\frac{L_2}{2L_1\pi\mu}\int_{S\e{1}}\da{2}f(\eta)\cdot \tilde{n}_1(\eta)\Lambda(\tilde{\kappa}_1-\tilde{\kappa}_2)(\eta)d\eta=-(L_2-L_1)\frac{L_2\e{2}+L_1L_2+L_1\e{2}}{2L_1\e{3}L\e{2}\pi\mu}\int_{S\e{1}}\da{2}f(\eta)\cdot\tilde{n}_1(\eta)H(\da{3}\tilde{\Gamma}_1\cdot\tilde{n}_1)(\eta)d\eta\\
&-\frac{1}{2L_2\e{3}\pi\mu}\int_{S\e{1}}\da{2}f(\eta)\cdot \tilde{n}_1(\eta)\Lambda(\da{2}f\cdot \tilde{n}_1)(\eta)d\eta-\frac{1}{2L_1L_2\e{2}\pi\mu}\int_{S\e{1}}\da{2}f(\eta)\cdot \tilde{n}_1(\eta)H(\da{3}\tilde{\Gamma}_2\cdot(\da{}f)\e{\perp})(\eta)d\eta\\
&-\frac{1}{2L_1L_2\e{2}\pi\mu}\int_{S\e{1}}\da{2}f(\eta)\cdot \tilde{n}_1(\eta)H(\da{2}\tilde{\Gamma}_2\cdot(\da{2}f)\e{\perp})(\eta)d\eta\\
&\leq-\frac{L_2}{2L_1\pi\mu}\int_{S\e{1}}(\Lambda\e{\frac{1}{2}}(\da{2}f\cdot \tilde{n}_1)(\eta))\e{2}d\eta+C\abs{L_1-L_2}\norm{\da{2}f}_{L\e{2}(S\e{1})}\norm{\da{3}\tilde{\Gamma}_1}_{L\e{2}(S\e{1})}+C\norm{\da{2}f}_{L\e{2}(S\e{1})}\norm{\da{}f}_{L\e{\infty}(S\e{1})}\norm{\da{3}\tilde{\Gamma}_2}_{L\e{2}(S\e{1})}\\
&+C\norm{\da{2}f}\e{2}_{L\e{2}(S\e{1})}\norm{\da{2}\tilde{\Gamma}_2}_{L\e{\infty}(S\e{1})}\leq -C\norm{\Lambda\e{\frac{1}{2}}(\da{2}f\cdot \tilde{n}_1)}_{L\e{2}(S\e{1})}\e{2}+C\abs{L_1-L_2}\e{2}+C\norm{f}\e{2}_{H\e{2}(S\e{1})}
\end{align*}

Thus, we can conclude that 
\begin{equation}\label{estimf}
\frac{d}{dt}(\norm{f}_{L\e{2}(S\e{1})}\e{2}+\norm{\da{2}f}\e{2}_{L\e{2}(S\e{1})})\leq C(\norm{f}_{L\e{2}(S\e{1})}\e{2}+\norm{\da{2}f}\e{2}_{L\e{2}(S\e{1})}+\abs{L_1-L_2}\e{2})
\end{equation}
On the other hand, we can estimate
 $\frac{d}{dt}\abs{L_1-L_2}\e{2}$ using \eqref{reformGFBP6} and similar arguments. Then 
\begin{equation}\label{estimL1L2}
\frac{d}{dt}\abs{L_1-L_2}\e{2}\leq C(\norm{f}_{L\e{2}(S\e{1})}\e{2}+\norm{\da{2}f}\e{2}_{L\e{2}(S\e{1})}+\abs{L_1-L_2}\e{2})
\end{equation}

Adding \eqref{estimf} and \eqref{estimL1L2} we obtain \eqref{estimunic}. Using that $\norm{f}\e{2}_{H\e{2}(S\e{1})}+\abs{L_1-L_2}\e{2}=0$ at $t=0$, we then deduce that $\tilde{\Gamma}_1=\tilde{\Gamma}_2$ and $L_1=L_2$ hence the sought-for uniqueness result follows.
\end{proof}

\begin{nota}
Theorem \ref{thexistreform} and Proposition \ref{reformulacion} imply Theorem \ref{thexist}.
\end{nota}

\subsection{Positivity of the thinness function $h$}

In order to conclude the proof of the local well posedness of the Geometric Free Boundary Problem we show that $h$ is positive and bounded as long as the curve parametrized by means of $\Gamma$ is well defined.

\begin{thm}\label{ththickness}
Let $h_0\in H_p\e{k}([0,L_0])$, $\Gamma_{0}(\alpha)\in H^{k}([0,L_0])$ for some $k\ge 3$, $L_0=L(0)>0$, $\mathcal{F}(\Gamma_{0})(\alpha,\beta)\in L^{\infty}$. Let us assume also that $h_0(\xi)>0$ for all $\xi\in[0,L_0]$. Let $(\Gamma,L)$ be the unique solution of the problem \eqref{GFBP1}, \eqref{GFBP2} and \eqref{GFBP4} in the domain $\mathcal{G}_T$ defined in \eqref{GT} with $T=T(\Gamma_{0},L_0)>0$ and the regularity propierties stated in Theorem \ref{thexist}. Then, there exists a solution of \eqref{GFBP3} with $h(\cdot,0)=h_0$, $h\in H\e{1}(\mathcal{G}_T)$ and $h(\cdot,t)\in H\e{k}_p([0,L(t)])$ of $t\in[0,T]$. Moreover, $h(\xi,t)>0$ for all $(\xi,t)\in\mathcal{G}_T$.
\end{thm}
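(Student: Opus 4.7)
The plan is to recognize that \eqref{GFBP3}, regarded as an evolution equation for $h$ with the triple $(\Gamma,L,u_0)$ already fixed by Theorem \ref{thexist}, is a linear first-order transport equation with a multiplicative zero-order source of the form
\begin{equation*}
\partial_t h + A(\xi,t)\partial_\xi h = B(\xi,t)\, h,
\end{equation*}
where $A(\xi,t)=u_0(\Gamma(\xi,t))\cdot\tau(\xi,t)-\psi_0(\xi,t)$ and $B(\xi,t)=n(\xi,t)\nabla u_0(\Gamma(\xi,t))\cdot n(\xi,t)$. The natural approach is the method of characteristics. First I would define the flow $X(\xi_0,t)$ in the $\xi$-variable by
\begin{equation*}
\tfrac{d}{dt}X(\xi_0,t)=A(X(\xi_0,t),t),\qquad X(\xi_0,0)=\xi_0,
\end{equation*}
extended periodically, and along it set $H(\xi_0,t)=h(X(\xi_0,t),t)$, which satisfies the linear ODE $\dot H=B(X(\xi_0,t),t)H$, so
\begin{equation*}
h(X(\xi_0,t),t)=h_0(\xi_0)\exp\!\left(\int_0^t B(X(\xi_0,s),s)\,ds\right).
\end{equation*}

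For this construction to make sense I need that $A$ is bounded and Lipschitz in $\xi$ uniformly on $[0,T]$, and that $B$ is bounded and measurable with enough regularity in $\xi$ for the $H^k_p$-propagation. The boundedness and Lipschitz character of $A$ follow from Theorem \ref{thexist}(iii), which asserts that $(\xi,t)\mapsto u_0(\Gamma(\xi,t))$ is continuous in $\mathcal{G}_T$ and Lipschitz in $\xi$, together with the fact that $\psi_0$ is obtained by integrating $\kappa\,u_0(\Gamma)\cdot n$, so $\psi_0\in L^\infty\cap \mathrm{Lip}_\xi$ as well. The coefficient $B$ requires one derivative of $u_0$ evaluated on $\Gamma$; using the decomposition $u_0=\bar{u}_0+w$ from Lemma \ref{lemu0galdi}, the singular-integral part $\bar u_0$ has tangential and normal derivatives on the curve controlled by $\norm{\tilde\Gamma}_{H^3(S^1)}$ via the estimates in Lemmas \ref{lemderivG}--\ref{lemderivu0}, and $w$ is $H^j(\Omega_\delta)$ for any $j$ by \eqref{cotaw}, giving $B\in L^\infty(\mathcal{G}_T)$. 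Then the flow $X$ is well defined and a bi-Lipschitz map of $[0,L(t)]$ with periodic extension, which together with the transport structure yields $h\in H^1(\mathcal{G}_T)$ and, after differentiating \eqref{GFBP3} repeatedly, $h(\cdot,t)\in H^k_p([0,L(t)])$; the propagation of $H^k_p$ regularity is standard for a linear transport equation once the coefficients are smooth enough, using the usual energy estimate $\tfrac{d}{dt}\|\partial_\xi^j h\|_{L^2}^2\le C(\|A\|_{W^{k,\infty}},\|B\|_{W^{k,\infty}})\|h\|_{H^k}^2$ and Grönwall.

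Positivity is then immediate from the characteristic formula: since $h_0(\xi_0)>0$ on the compact set $[0,L_0]$ it is bounded below by a positive constant, and the exponential factor is strictly positive and bounded because $B$ is in $L^\infty(\mathcal{G}_T)$; hence $h(\xi,t)\ge h_0(\xi_0)e^{-T\|B\|_\infty}>0$ for every $(\xi,t)\in\mathcal{G}_T$, with $\xi_0$ the preimage of $\xi$ under the flow, and similarly $h$ is bounded above.

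The main obstacle I expect is not the transport argument itself but producing a quantitative control of $B=n\nabla u_0\cdot n$ on the curve with the regularity needed to close an $H^k_p$ estimate; this requires the trace of $\nabla u_0$ on $\partial\Omega_2^+$, and since $u_0$ solves a Stokes problem with a singular source $2\kappa n\delta_{\partial\Omega_2^+}$, the tangential derivatives of $\nabla u_0\cdot n$ along the curve inherit logarithmic singularities that must be absorbed using the periodic Hilbert transform bounds employed already in Lemmas \ref{lema2der} and \ref{lemderivu0}. Once these bounds are in place, the rest of the argument is a straightforward application of the method of characteristics combined with Grönwall and the regularity properties of $(\Gamma,L,u_0)$ guaranteed by Theorem \ref{thexist}.
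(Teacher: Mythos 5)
Your overall strategy---solving \eqref{GFBP3} by characteristics along the drift $A(\xi,t)=u_0(\Gamma(\xi,t))\cdot\tau-\psi_0(\xi,t)$, representing $h$ by an exponential of the integrated zero-order coefficient, and reading off positivity from that formula---is exactly the route the paper takes, and your use of Theorem \ref{thexist} to get the Lipschitz continuity of $A$ in $\xi$ and the boundedness of $B=n\cdot\nabla u_0(\Gamma)\,n$ along the curve is in line with what is needed. However, there is a genuine gap hidden in the phrases ``extended periodically'' and ``bi-Lipschitz map of $[0,L(t)]$ with periodic extension.'' The coefficient $A$ is \emph{not} $L(t)$-periodic: by \eqref{GFBP2} and \eqref{GFBP4} one has $\psi_0(\xi+L(t),t)-\psi_0(\xi,t)=\int_0^{L(t)}\kappa\,u_0(\Gamma)\cdot n\,dr=-\partial_t L(t)$, so $A(\xi+L(t),t)=A(\xi,t)+\partial_t L(t)$; moreover the spatial period $L(t)$ itself moves in time. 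Consequently, in order for the characteristic formula to define a single-valued function $h(\cdot,t)\in H^k_p([0,L(t)])$ on the time-dependent domain $\mathcal{G}_T$, you must verify that the characteristics launched from $\gamma=0$ and $\gamma=L_0$ stay exactly one period apart, i.e.\ that $\alpha(t,L_0)-\alpha(t,0)=L(t)$ for all $t\in[0,T]$. This is precisely where \eqref{GFBP4} enters, and your proposal never invokes it.

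The paper closes this point by setting $\phi(t)=\alpha(t,L_0)-\alpha(t,0)$ and observing that $\phi$ and $L$ satisfy the \emph{same} scalar ODE: the increment of $u_0(\Gamma)\cdot\tau$ over one period vanishes by periodicity, while the increment of $\psi_0$ equals $-\partial_t L(t)$ by \eqref{GFBP4}; since $\phi(0)=L_0=L(0)$, uniqueness for the ODE gives $\phi\equiv L$, and only then is the mapping $\gamma\mapsto\alpha(t,\gamma)$ a bijection from $[0,L_0]$ onto a full period of length $L(t)$, so that \eqref{GFBP3} with the stated periodicity is actually solved. Adding this verification (or an equivalent quasi-periodicity argument for the flow) would complete your proof; the remaining ingredients---Lipschitz coefficients from Theorem \ref{thexist}, boundedness of $B$ via the decomposition $u_0=\bar u_0+w$, the Gr\"onwall-type propagation of $H^k_p$ regularity, and the strict positivity $h\ge \min h_0\,e^{-T\|B\|_{L^\infty}}>0$---are consistent with the paper's argument.
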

\begin{nota}
Notice that the thickness function $h$ is smooth and positive as long as the solution of the problem \eqref{GFBP1}, \eqref{GFBP2} and \eqref{GFBP4} is well defined. Therefore, the arguments made to approximate the original free boundary problem for the Stokes equation (c.f. \eqref{stokesepsilon}-\eqref{cond7} and \eqref{ecevol3}, \eqref{ecevol1})  in Section \ref{S3} by means of the Geometrical Free Boundary problem are self consistent.
\end{nota}
\begin{proof}
We are going to solve \eqref{GFBP3} using the characteristic method for a first order PDE. 
To this end we define $\alpha=\alpha(t,\gamma)$ for $\gamma\in[0,L_0]$ and $t\in[0,T]$ by means of the family of differential equations

\begin{align}
\label{eqcar2}
&\frac{\partial\alpha}{\partial t}(t,\gamma)=(u_0(\Gamma)\cdot\tau)(\alpha(t,\gamma),t)-\psi_0(\alpha(t,\gamma),t),\quad\alpha(0,\gamma)=\gamma\quad\textit{for}\quad\gamma\in[0,L_0]
\end{align}
where $u_0$ is as in Theorem \ref{thexist}. The function $\alpha$ is well defined by means of \eqref{eqcar2} due to the regularity properties for $u_0$ in Theorem \ref{thexist} (ii) as well as the fact that the function $\psi_0$ in \eqref{GFBP2} is Lipschitz continuous in the variable $\xi$. 

It is well know that if a solution of \eqref{GFBP3} exists we have 

\begin{displaymath}
\frac{dh}{dt}(\alpha(t,\gamma),t)=-(n\cdot\nabla u_0(\Gamma) n)(\alpha(t,\gamma),t)h(\alpha(t,\gamma),t)
\end{displaymath}

Then $h$ would be given by

\begin{equation}\label{eqhexplicit}
h(\alpha(t,\gamma),t)=h_0(\gamma)\exp(-\int_0\e{t}(n\cdot\nabla u_0(\Gamma) n)(\alpha(r,\gamma),r)dr)
\end{equation}

It turns out that the function \eqref{eqhexplicit} allows to define a solution of \eqref{GFBP3} if the mapping $\gamma\to\alpha(t,\gamma)$ is bijective from $[0,L_0]$ to an interval $[\alpha(t,0),\alpha(t,L_0)]$ with 
\begin{equation}\label{restaalfa}
\alpha(t,L_0)-\alpha(t,0)=L(t)
\end{equation}
 for $t\in[0,T]$. The mapping $\gamma\to\alpha(t,\gamma)$ is strictly monotone for $\gamma\in[0,L_0]$ and $t\in[0,T]$ due to the fact that \eqref{eqcar2} is a first order ordinary differential equation in the $t$ variable in which the right hand side is Lipschitz in $\alpha$.

Thus, it only remains to prove \eqref{restaalfa}.
In order to do that we define a function $\phi(t)$ by means of
\begin{equation*}
\phi(t)=\alpha(t,L_0)-\alpha(t,0)
\end{equation*}

Therefore, using \eqref{eqcar2} we can see that the function $\phi$ solves the following first order differential equation
\begin{equation*}
\frac{\partial\phi}{\partial t}(t)=(u_0(\Gamma)\cdot\tau)(\alpha(t,0)+\phi(t),t)-(u_0(\Gamma)\cdot\tau)(\alpha(t,0),t)-(\psi_0(\alpha(t,0)+\phi(t),t)-\psi_0(\alpha(t,0),t))
\end{equation*}
On the other hand, since $\Gamma$ is periodic with period $L(t)$ we can rewrite \eqref{reformGFBP6} by
\begin{equation*}
\frac{\partial L}{\partial t}(t)=(u_0(\Gamma)\cdot\tau)(\alpha(t,0)+L(t),t)-(u_0(\Gamma)\cdot\tau)(\alpha(t,0),t)-(\psi_0(\alpha(t,0)+L(t),t)-\psi_0(\alpha(t,0),t))
\end{equation*}
where we use that $(u_0(\Gamma)\cdot\tau)(\alpha(t,0)+L(t),t)-(u_0(\Gamma)\cdot\tau)=0$ due to the periodicity of $u_0\circ\Gamma$ and $\tau$.
Therefore, $\phi$ and $L$ solve the same ODE with the same initial data $\phi(0)=L_0$. Then it follows from standard uniqueness result that $\phi(t)=L(t)$ and the Theorem follows.

\end{proof}
\section*{Acknowledgement}
The authors acknowledge support through the CRC 1060 (The Mathematics of Emergent
Effects) that is funded through the German Science Foundation (DFG), and the Deutsche Forschungsgemeinschaft (DFG, German Research Foundation) under Germany 's Excellence Strategy – EXC-2047/1 – 390685813.

\bibliography{referencias}
\bibliographystyle{plain}
\end{document}